\newcommand{\mynote}[3]{
  \fbox{\bfseries\sffamily\scriptsize#1}
  {\small$\blacktriangleright$\textsf{\emph{\color{#3}{#2}}}$\blacktriangleleft$}}}
\newcommand{\mynote}[3]{}}
\definecolor{asparagus}{rgb}{0.53, 0.66, 0.42}
  \theoremstyle{plain}
  \newtheorem{theorem}{Theorem}
  \newtheorem{proposition}{Proposition}
  \newtheorem{lemma}[proposition]{Lemma}
  \theoremstyle{definition}
  \newtheorem{definition}{Definition}
  \def \E{\mathscr{E}}
  \def \H{\mathcal{H}}
  \def \R{\mathbb{R}}
  \newcommand {\supp} {\mathop \textup{supp}}
 \newcommand {\dist} {\mathop \textup{dist}}
\DeclareMathOperator*{\esssup}{ess\,sup}
  \newcommand {\p} {\partial}
  \newcommand {\eps} {\varepsilon}
 \newcommand {\jump}[1] {[\![ #1 ]\!]}
  \title[{BV} solutions for {MCF} with constant contact angle]
	{{BV} solutions for mean curvature flow with constant contact angle: Allen--Cahn 
	approximation and weak-strong uniqueness}
\author{Sebastian Hensel}
\address{Institute of Science and Technology Austria (IST Austria), Am~Campus~1, 
3400 Klosterneuburg, Austria}
\email{sebastian.hensel@ist.ac.at}
\curraddr{Hausdorff Center for Mathematics, Universit{\"a}t Bonn, Endenicher Allee 62, 53115 Bonn, Germany
(\texttt{sebastian.hensel@hcm.uni-bonn.de})}
\author{Tim Laux}
\address{Hausdorff Center for Mathematics, Universit{\"a}t Bonn, Endenicher Allee 62, 53115 Bonn, Germany}
\email{tim.laux@hcm.uni-bonn.de}
    \date{\today}
\begin{document}

\begin{abstract}
    	We study weak solutions to mean curvature flow satisfying Young's angle condition
			for general contact angles~$\alpha \in (0,\pi)$. 
    	First, we construct BV solutions using the Allen--Cahn approximation with boundary contact energy as proposed by Owen and Sternberg.
    	Second, we prove the weak-strong uniqueness and stability for this solution concept.
    	The main ingredient for both results is a relative energy, which can also be interpreted as a tilt excess. 
    	
  \medskip 	
    
  \noindent \textbf{Keywords:} Mean curvature flow, contact angle, Young's law, wetting, phase-field approximation, relative entropy method, calibrations

  \medskip

\noindent \textbf{Mathematical Subject Classification}:
	53E10, 
	35K57, 
	35K20, 
	35K61 
  \end{abstract}
\maketitle

\tableofcontents

\section{Introduction}

	\subsection{Context}
	The evolution of embedded surfaces by mean curvature flow (MCF) arises in many physical systems in which surface tension effects are dominant.
	A typical boundary condition for such surfaces is a prescribed angle at which the surface meets the boundary of a given container.
	This angle, called the Young angle, is dictated by the surface tensions, as the surface meets the boundary at a fixed angle which is energetically optimal.
	In this work, we propose and study a weak solution concept
	to MCF satisfying such a boundary condition, following ideas 
	introduced by Luckhaus and Sturzenhecker \cite{Luckhaus1995} for the whole space setting. As these solutions are based on functions of bounded variation, we refer to them as BV solutions. 
	We are then interested in two problems: the construction of BV solutions for MCF with
	constant contact angle, in particular using the canonical Allen--Cahn approximation; and uniqueness properties of such BV solutions, in particular as long as a classical solution to mean curvature flow with constant contact angle exists.
	
	The free energy to model boundary contact in the phase-field framework of the Allen--Cahn equation was  proposed by Cahn~\cite{Cahn1977}; see \eqref{eq:def Eeps} below for the precise formula.
	The behavior of minimizers in the sharp-interface limit was investigated by Modica \cite{Modica1987} in the framework of $\Gamma$-convergence.
	Building on their previous work \cite{Kagaya2017},  Kagaya and Tonegawa~\cite{Kagaya2018} analyzed critical points in the sharp-interface limit using the theory of varifolds.

	The dynamical model we study here was introduced by Owen and Sternberg \cite{Owen1992}, who also carried out formal matched asymptotic expansions. 
	It is simply the $L^2$-gradient flow of Cahn's energy on a slow time scale, see \eqref{eq:AllenCahn1}--\eqref{eq:AllenCahn3} below.
	Based on the comparison principle, Katsoulakis, Kossioris and Reitich~\cite{Katsoulakis1995} proved the convergence to the viscosity solution in a convex container.
	In the case of zero boundary energy, Abels and Moser~\cite{Abels2019} derived convergence rates before the onset of singularities. 
	Their proof relies on a spectral gap inequality and a Gronwall argument to make an asymptotic expansion as in \cite{Owen1992} rigorous. 
	The same authors extended this result to contact angles close to $90^\circ$ \cite{Abels2021}. 
	As their method does not rely on the comparison principle, Moser~\cite{Moser2021} was able to generalize the proof to the vectorial Allen--Cahn equation in the case of a transition between two wells.
	

	Our first main result concerns the construction of BV solutions to mean curvature flow using the Allen--Cahn equation.
	Neglecting the effects of boundary contact, Simon and one of the authors~\cite{LauxSimon18} derived a conditional convergence result for the Allen--Cahn equation to a BV solution. 
	Such conditional convergence results are inspired by Luckhaus and Sturzenhecker~\cite{Luckhaus1995} and have reemerged in the multiphase setting by Otto and one of the authors~\cite{Laux-Otto}. 
	Here, we generalize the result~\cite{LauxSimon18} to incorporate boundary effects. 
	When neglecting boundary conditions, the energy convergence can be dropped, see~\cite{Hensel2021e}. We would expect that a similar strategy might also work in the context of the present paper.
	
	Our second main result establishes the weak-strong uniqueness of BV solutions with boundary contact.
	Here, we rely on the notion of calibrated flows, which is a generalization of calibrations to the dynamic setting introduced in our work \cite{Fischer2020a} with Fischer and Simon.
	More precisely, we use a generalization of this concept to the case of boundary contact and show that any calibrated flow is unique in the class of BV solutions.
	The construction of these calibrations in $d=2$ is carried out by Moser and the first author~\cite{Hensel2021c}, who use them to adapt the convergence proof of Fischer, Simon and the second author~\cite{Fischer2020}. 
	We expect that combining ideas from~\cite{Hensel2021c} and our recent work on double bubbles in three dimensions~\cite{Hensel2021} allows to construct these calibrations for smooth mean curvature flows in the presence of boundary contact also in dimension $d=3$.

%
%



	\subsection{Precise setting and assumptions.}
	Let $\Omega\subset \R^d $ be a bounded domain with smooth 
	and orientable boundary $\p \Omega$, and let $T \in (0,\infty)$ be a finite time horizon.
	Following Owen and Sternberg~\cite{Owen1992}, we consider the following Allen--Cahn problem
	\begin{align}
	\label{eq:AllenCahn1}
		\p_t u_\eps &= \Delta u_\eps - \frac1{\eps^2} W'(u_\eps) && \text{in } \Omega \times (0,T),
		\\ \label{eq:AllenCahn2}
		u_\eps &= u_{\eps,0} &&\text{on } \Omega\times\{t=0\},
		\\ \label{eq:AllenCahn3}
		(\nu_{\p \Omega} \cdot  \nabla)u_\eps &= \frac{1}{\eps}\sigma'(u_\eps)&&\text{on } \p \Omega \times (0,T).
	\end{align}
	Here, $\nu_{\p \Omega}$ denotes the inward-pointing unit normal along~$\p\Omega$.
	The nonlinearities~$W \colon \R \to [0,\infty)$ and~$\sigma \colon \R \to [0,\infty)$
	are two (at least) differentiable functions, and we assume $W(s)=0$ if and only if $s \in \{\pm 1\}$.
	More precise assumptions on~$W$ and~$\sigma$ will be given later.
	Here, $W$ is the potential energy in the bulk and $\sigma$ is the energy per area on the boundary of the container. 
	For simplicity, we will focus on the standard double-well potential $W(s)=\frac12(1-s^2)^2$.
	We will state the precise assumptions on the function $\sigma$ later. 
	The standard example to keep in mind is
	\begin{align*}
		\sigma(s)=
		\begin{cases} 
			0, & s\leq -1,\\
			(s-\frac13 s^3+\frac23)\cos \alpha, & s \in [-1,1],\\
			\frac43\cos \alpha,&  s\geq 1,
		\end{cases}
	\end{align*}
	where $\alpha \in (0,\frac\pi2]$ is the desired Young's angle in the sharp-interface limit. 
	
	Our proofs are based on the gradient-flow structure of the system \eqref{eq:AllenCahn1}--\eqref{eq:AllenCahn3}.
	The energy in the gradient-flow structure, first proposed by Cahn~\cite{Cahn1977}, contains a bulk term and a boundary term
	\begin{align}\label{eq:def Eeps}
		E_\eps(u_\eps) = \int_\Omega  \Big( \frac{\eps}{2} |\nabla u_\eps|^2 + \frac1\eps W(u_\eps)  \Big)  dx
		+  \int_{\p\Omega} \sigma(u_\eps) \,d\H^{d-1}.
	\end{align}
	The metric in the gradient-flow structure is simply the standard $L^2$-metric in the bulk (with a small prefactor):
	\begin{align}
		(\delta u_\eps , \delta u_\eps)_{\eps} = \int_\Omega \eps (\delta u_\eps)^2 \,dx.
	\end{align}
	The small prefactor in the metric simply corresponds to a change of variables in time meaning that we are considering the gradient flow on a slow time scale.
	This structure can be read off the energy-dissipation relation
	\begin{align}\label{eq:dtE}
	\frac{d}{dt} E_\eps (u_\eps) = -\int_\Omega \eps (\p_t u_\eps)^2\,dx.
	\end{align}
	This means that the boundary condition \eqref{eq:AllenCahn2} results in an additional term in the energy but does not change the metric.

	For later use, we record the $\Gamma$-limit $E$ of $E_\eps$ due to Modica~\cite{Modica1987}, which states that in the sharp-interface limit, three surface-energy terms compete against each other:
		\begin{align}
			E(u) =  c_0 \H^{d-1}(\p^* \{u{=}1\} \cap \Omega) 
			+ \hat\sigma_+ \H^{d-1}(\p^*\{u{=}1\} \cap \p\Omega)
			+ \hat\sigma_- \H^{d-1} (\p^*\{u{=}-1\} \cap \p\Omega)
		\end{align}
		for $u \colon \Omega \to \{\pm 1\} \in BV(\Omega)$, and $E(u) = +\infty$ otherwise.
		The associated surface tension constants $c_0$, $\hat\sigma_+$ and $\hat\sigma_-$ 
		are given as follows:
		\begin{align}
		\label{def:psi}
			\psi(s) &:= \int_{-1}^s \sqrt{2W(s')} \,ds', \quad s\in\R,
			\\
			\label{def:surfaceTensionInterface}
			 c_0 &:= \psi(1) - \psi(-1) = \psi(1) > 0,
		\end{align}
		and
		\begin{align*}
			\hat\sigma_\pm := \hat\sigma(\pm 1) \geq 0,
		\end{align*}
		where 
		\begin{align}
		\label{def:relaxedBoundaryDensity}
			\hat\sigma(s) := \inf \{  \sigma(s') + |\psi(s')-\psi(s)|\colon s'\in \R  \},
			\quad s\in\R.
		\end{align}
		The structure of $\hat \sigma$ is as follows. The function $\hat\tau:=\hat\sigma\circ \psi^{-1}$ is the (lower) 
		$1$-Lipschitz envelope of $\tau:=\sigma \circ \psi^{-1}$:
		\begin{align}
		\label{eq:LipschitzEnvelope}
			\hat \tau = \sup\{\tilde \tau : \tilde \tau \text{ is $1$-Lipschitz and } \tilde \tau \leq \tau\}.
		\end{align}
			
Since $\Omega$ is bounded, by replacing $u_\eps$ by $-u_\eps$, we may w.l.o.g.\ assume $\hat\sigma_+ \geq \hat\sigma_-$ 
and by subtracting the irrelevant constant $\int_{\p\Omega} \hat\sigma_-\,d\H^{d-1}$ 
from $E_\eps$ and $E$, we may w.l.o.g.\ assume $\hat\sigma_-=0$. 
In particular,
\begin{align}
\label{eq:jumpSurfaceTensionContainer}
\jump{\hat \sigma} 
:= \hat\sigma_+ - \hat\sigma_- = \hat\sigma_+ \geq 0.
\end{align}
To allow both the phases $\{u{=}1\}$ and $\{u{=}-1\}$ of the limit problem
to wet the boundary of the container, we assume 
\begin{align}
\hat\sigma_+ < c_0.
\end{align} 
In particular, there exists an angle $\alpha \in (0,\frac{\pi}{2}]$ such that
Young's law holds true, i.e.,
\begin{align}
\label{eq:Young}
c_0\cos\alpha = \hat\sigma_+ = \jump{\hat\sigma}.
\end{align}

Let us state next
the precise assumptions on~$W$ and~$\sigma$ which will be assumed throughout the rest of the paper. 
Since our main attention does not lie on the well-studied bulk energy, we restrict to the standard double-well potential~$W \colon \R \to [0,\infty)$ given by
\begin{align}
\label{assump:W}
\tag{A1}
W(s) = \frac12 (1-s^2)^2, \quad s \in \R.
\end{align}
Regarding the boundary energy density~$\sigma \colon \R \to [0,\infty)$, we consider the following class. In terms of regularity, $\sigma$ is required to satisfy 
\begin{align}
\label{assump:Sigma1}
\tag{A2}
\sigma \in C^{1,1}(\R), \quad \sigma' > 0 \text{ in } (-1,1),
\quad \supp\sigma' \subset [-1,1]. 
\end{align}
Moreover, there exist an angle $\alpha \in (0,\frac{\pi}{2}]$
and a constant $\kappa \in (0,1{-}\cos\alpha)$ such that
(recall the definition~\eqref{def:psi} of~$\psi$)
\begin{align}
\label{assump:Sigma2}
\tag{A3}
(\cos\alpha)\psi \leq \sigma \leq (1{-}\kappa)\psi
\quad\text{on } [-1,1].
\end{align}
Last but not least, we require compatibility at the phase values~$\pm 1$
in form of
\begin{align}
\label{assump:Sigma3}
\tag{A4}
\sigma(-1) = 0, \quad \sigma(1) = c_0 \cos\alpha.
\end{align}
Let us remark that it holds
\begin{align}
\label{eq:relaxedBoundaryDensity}
\hat \sigma = \sigma \quad\text{under the assumptions
\eqref{assump:Sigma1}--\eqref{assump:Sigma3}},
\end{align}
so that we indeed recover~\eqref{eq:jumpSurfaceTensionContainer}
and~\eqref{eq:Young} under these assumptions. The asserted identity $\hat \sigma = \sigma$
in turn follows from straightforward arguments which we leave to the interested reader. 

The rest of the paper is organized as follows. In Section~\ref{sec:results}, we state the main results of this paper. The proof of the first main result, the convergence of the Allen--Cahn equation with nonlinear Robin boundary condition to mean curvature flow with fixed contact angle, is given in Section~\ref{sec:conv}. In Section~\ref{sec:uniqueness}, we prove the second main result, which concerns the uniqueness of mean curvature flow with fixed contact angle. 

\section{Main results} \label{sec:results}

\subsection{Existence of $BV$ solutions to MCF with constant contact angle}
We start by providing the definition of a suitable weak solution concept
which is phrased in the language of sets of finite perimeter.

	\begin{definition}
	\label{def:weak sol}
		Let $d\geq 2$, consider a finite time horizon $T \in (0,\infty)$,
		and let the angle $\alpha\in (0,\frac{\pi}{2}]$ be given by Young's law
		\begin{align}
		\label{eq:YoungsLaw}
		c_0\cos\alpha = \jump{\hat\sigma}.
		\end{align}
		We say a one-parameter family of open sets $A(t) \subset \Omega$
		with finite perimeter in $\R^d$, $t\in[0,T]$, 
		is a \emph{distributional (or $BV$) solution to mean curvature flow in $\Omega$ 
		with constant contact angle $\alpha$} if 
		\begin{enumerate}[1.]
			\item (Existence of normal velocity)
				There exists a $(\H^{d-1} \llcorner (\p^*A(t) \cap \Omega))dt$-measurable function $V$ such that 
				\begin{align}
					\int_0^T\int_{\p^* A(t) \cap \Omega} V^2 \,d\H^{d-1} dt < \infty
				\end{align}
				and $V(\cdot,t)$ is the normal speed of $\p^* A(t)\cap \Omega$ 
				with respect to $-\nu_{A(t)} := -\smash{\frac{\nabla\chi_{A(t)}}{|\nabla\chi_{A(t)}|}}$ in the sense that 
				for almost every $T'\in (0,T)$ and all $\zeta \in C_c^\infty (\overline{\Omega} \times [0,T))$
				\begin{align}
				\label{eq:evolSet}
				\int_{A(T')} \zeta(\cdot,T') \,dx - \int_{A(0)} \zeta(\cdot,0) \,dx
				=	\int_0^{T'} \int_{ A(t)} \p_t \zeta\,dx dt 
				+ \int_0^{T'} \int_{\p^*A(t)\cap \Omega} V \zeta \,d\H^{d-1} dt.
				\end{align}
				
			\item (Motion law) 
			For almost every $T'\in (0,T)$ and any test vector field 
			$B \in C_c^\infty(\overline{\Omega} \times [0,T);\R^d)$ with $B \cdot \nu_{\p \Omega} =0$ 
			on $\p\Omega \times (0,T)$ it holds
			\begin{equation}
			\begin{aligned}
			\label{eq:motionLaw}
				& c_0 \int_0^{T'} \int_{\p^*A(t)\cap \Omega} 
				(I_d {-} \nu_{A(t)}\otimes \nu_{A(t)}) : \nabla B \,d\H^{d-1} dt 
				\\&
				+\jump{\hat \sigma} \int_0^{T'} \int_{\p^*A(t)\cap \p\Omega} 
				(I_d {-} \nu_{\p\Omega}\otimes \nu_{\p \Omega}) : \nabla B \,d\H^{d-1} dt 
				=  \int_0^{T'} \int_{\p^*A(t) \cap \Omega} B\cdot \nu_{A(t)} V \,d\H^{d-1} dt.
			\end{aligned}
			\end{equation}
			
			\item (Optimal energy dissipation rate) We have
			\begin{align}
			\label{eq:energyDissipation}
			 	E(A(T')) +  c_0\int_{0}^{T'} \int_{\p^*A(t)\cap \Omega} 
				V^2 \,d\H^{d-1} dt \leq E(A(0))
			\end{align}		
			for almost every $T' \in (0,T)$, where we  defined
			\begin{align}
			\label{eq:energySharpInterface}
			E(A(t)) := E(2\chi_{A(t)}{-}1) = c_0 \int_{\p^* A(t) \cap \Omega} 1 \,d\H^{d-1} 
			+ \jump{\hat \sigma} \int_{\p^* A(t) \cap \p\Omega} 1\,d\H^{d-1}.
			\end{align}
		\end{enumerate}
	\end{definition}
	
The first main result of the present contribution is concerned with the existence
of distributional solutions to MCF with constant contact angle in the sense of the
previous definition. More precisely, we show that solutions of the Allen--Cahn 
problem~\eqref{eq:AllenCahn1}--\eqref{eq:AllenCahn3} converge subsequentially and
conditionally towards such distributional solutions.
	
	\begin{theorem}
	\label{theo:existence}
	  \emph{i)}
		Let $T \in (0,\infty)$ be a finite time horizon, $d \geq 2$,
		and let $(W,\sigma)$ at least be subject to the assumptions~\eqref{assump:W} 
		and~\eqref{assump:Sigma1}. Moreover, let a sequence $(u_{\eps,0})_{\eps > 0}$
		of initial phase fields be given such that
		\begin{align}
		\label{eq:uniformBoundInitialEnergy}
		\sup_{\eps > 0} E_\eps(u_{\eps,0}) &< \infty,
		\\
		\label{eq:uniformLinftyBoundInitialData}
		\sup_{\eps > 0} \|u_{\eps,0}\|_{L^\infty(\Omega)} &\leq 1,
		\end{align}
		and such that there exists a set of finite perimeter $A(0) \subset \Omega$
		satisfying
		\begin{align}
		\label{eq:convergenceInitialCondition}
		\psi(u_{\eps,0}) &\to c_0\chi_{A(0)} 
		&&\text{in } L^1(\Omega) \text{ as } \eps\downarrow 0,
		\\ \label{eq:initialEnergyConvergence}
		E_\eps(u_{\eps,0}) &\to E(2\chi_{A(0)}{-}1) = E(A(0)) &&\text{as } \eps\downarrow 0,
		\end{align}
		where $\psi$ and $c_0$ are defined in~\eqref{def:psi} 
		and~\eqref{def:surfaceTensionInterface}, respectively.
		Let $(u_\eps)_{\eps > 0}$ denote the associated sequence of weak solutions
		to~\emph{\eqref{eq:AllenCahn1}--\eqref{eq:AllenCahn3}} in the sense
		of Definition~\ref{def:weakSolAllenCahn} below (cf.\ Lemma~\ref{lem:propertiesWeakSol}
		below for existence and uniqueness).
		
		Then there exists a subsequence $\eps \downarrow 0$ and a one-parameter family
		of sets of finite perimeter $A(t) \subset \Omega$, $t \in [0,T]$, such that 
		\begin{align}
		\label{eq:convergenceTheorem}
		\psi_\eps &\to \psi_0 := \psi(2\chi_A{-}1) = c_0\chi_A
		&&\text{strongly in } L^1(\Omega\times(0,T)) \text{ as } \eps \downarrow 0,
		\end{align}
		where $\chi_A(x,t) := \chi_{A(t)}(x)$ for all $(x,t) \in \Omega \times [0,T]$.
		The evolving indicator function satisfies
		$\chi_A\in C([0,T];L^1(\Omega)) \cap BV(\Omega \times (0,T))$.
		
		\emph{ii)}
		Suppose in addition that the assumptions~\eqref{assump:Sigma2} and~\eqref{assump:Sigma3}
		on the boundary energy density~$\sigma$ are satisfied, and that we have
		(with respect to the above subsequence $\eps \downarrow 0$ and the map $\chi_A$)
		\begin{align}
		\label{assump:energyConvergence}
			\lim_{\eps \downarrow0} \int_0^T E_\eps(u_\eps(\,\cdot\,,t))\,dt  = \int_0^TE(A(t))\,dt.
		\end{align} 
		Then $A(t)$, $t \in [0,T]$, is a distributional solution to mean curvature flow in $\Omega$ 
		with contact angle~$\alpha$ according to Definition~\ref{def:weak sol} above.
	\end{theorem}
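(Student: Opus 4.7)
The plan is to adapt the Luckhaus--Sturzenhecker~\cite{Luckhaus1995} and Laux--Simon~\cite{LauxSimon18} scheme to the boundary-contact setting. The additional boundary energy in~\eqref{eq:def Eeps} and the nonlinear Robin condition~\eqref{eq:AllenCahn3} will produce exactly the contact-angle term in the motion law~\eqref{eq:motionLaw}, provided that mass is not lost in the bulk-to-boundary limit; the assumption~\eqref{assump:energyConvergence} is what guarantees this.

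\textbf{Part (i): compactness.} Starting from the dissipation identity~\eqref{eq:dtE} and~\eqref{eq:uniformBoundInitialEnergy}, we have the uniform-in-$\eps$ bounds $\sup_{t\in[0,T]} E_\eps(u_\eps(\cdot,t)) \leq E_\eps(u_{\eps,0})$ and $\int_0^T\!\int_\Omega \eps(\p_t u_\eps)^2 \,dx\,dt \leq E_\eps(u_{\eps,0})$. Modica's chain-rule inequality $|\nabla \psi(u_\eps)| \leq \frac{\eps}{2}|\nabla u_\eps|^2 + \eps^{-1}W(u_\eps)$ bounds $\psi_\eps := \psi(u_\eps)$ uniformly in $L^\infty(0,T;BV(\Omega))$, while the Cauchy--Schwarz estimate
\begin{align*}
\int_0^T\!\!\int_\Omega |\p_t\psi_\eps| \,dx\,dt
\leq T^{1/2}\Bigl(\int_0^T\!\!\int_\Omega \eps(\p_t u_\eps)^2\,dx\,dt\Bigr)^{1/2}
\Bigl(\sup_t \int_\Omega \tfrac{1}{\eps}W(u_\eps)\,dx\Bigr)^{1/2}
\end{align*}
bounds $\p_t \psi_\eps$ in $L^1(\Omega \times (0,T))$. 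Standard $BV$-compactness yields a subsequence with $\psi_\eps \to \psi_0$ in $L^1(\Omega\times(0,T))$; since $\int W(u_\eps)/\eps\,dx$ is bounded, $u_\eps \to \pm 1$ a.e., so $\psi_0 = c_0\chi_A$ with $\chi_A \in BV(\Omega \times (0,T))$, and Fubini gives that $A(t)$ has finite perimeter for almost every $t$. The $L^1$-bound on $\p_t\psi_\eps$ upgrades $\chi_A$ to $C([0,T];L^1(\Omega))$ after redefinition on a time-null set.

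\textbf{Part (ii): velocity, dissipation, and motion law.} The evolution identity~\eqref{eq:evolSet} together with the $L^2$-bound on~$V$ is obtained by testing~\eqref{eq:AllenCahn1} against $\psi'(u_\eps)\zeta$ and passing to the limit as in~\cite{LauxSimon18}; the normal velocity is identified as a weak $L^2$-limit of $-\sqrt{2W(u_\eps)}\,\p_t u_\eps$ concentrated on the diffuse interface. The energy-dissipation inequality~\eqref{eq:energyDissipation} follows from~\eqref{eq:dtE} by lower semicontinuity, using~\eqref{eq:initialEnergyConvergence} on the right-hand side and the Modica--Mortola/Owen--Sternberg lower bound together with~\eqref{assump:Sigma2}--\eqref{assump:Sigma3} on the left. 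For the motion law~\eqref{eq:motionLaw}, test~\eqref{eq:AllenCahn1} with $\eps\nabla u_\eps \cdot B$ for a tangential~$B$; integration by parts produces the diffuse stress tensor
\begin{align*}
T_\eps := \Bigl(\tfrac{\eps}{2}|\nabla u_\eps|^2 + \tfrac{1}{\eps}W(u_\eps)\Bigr)I_d - \eps\,\nabla u_\eps \otimes \nabla u_\eps
\end{align*}
as the interior integrand. The boundary term $-\int_{\p\Omega}\eps(\nu_{\p\Omega}\cdot\nabla u_\eps)(\nabla u_\eps\cdot B)\,d\H^{d-1}$ that appears upon integrating $\eps\Delta u_\eps$ against $\nabla u_\eps \cdot B$ collapses, via the Robin condition~\eqref{eq:AllenCahn3}, the tangentiality of~$B$, and the chain rule $\nabla_{\p\Omega}\sigma(u_\eps) = \sigma'(u_\eps)\nabla_{\p\Omega} u_\eps$, to $\int_{\p\Omega}\sigma(u_\eps)\,\div_{\p\Omega}B\,d\H^{d-1}$ after a further tangential integration by parts. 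The right-hand side $-\int \eps(\p_t u_\eps)(\nabla u_\eps\cdot B)\,dx\,dt$ converges to the velocity term $\int_0^{T'}\!\int_{\p^*A(t)\cap\Omega} V\,\nu_A\cdot B\,d\H^{d-1}dt$ by strong $L^2$ compactness of $\sqrt\eps\,\p_t u_\eps$ and the identification of~$V$.

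\textbf{Main obstacle.} The passage to the limit in the stress-tensor identity hinges on the sharp measure-valued identifications
\begin{align*}
\tfrac{\eps}{2}|\nabla u_\eps|^2\,dx\,dt &\rightharpoonup^* c_0\,(\H^{d-1}\llcorner\p^*A(t){\cap}\Omega)\otimes dt,
\\
\sigma(u_\eps)\,d\H^{d-1}|_{\p\Omega}\,dt &\rightharpoonup^* \jump{\hat\sigma}\,(\H^{d-1}\llcorner\p^*A(t){\cap}\p\Omega)\otimes dt,
\end{align*}
together with the equipartition $\frac{\eps}{2}|\nabla u_\eps|^2 - \eps^{-1}W(u_\eps) \to 0$ in $L^1$ and the rank-one structure $\eps\,\nabla u_\eps \otimes \nabla u_\eps \rightharpoonup^* c_0\,\nu_A\otimes\nu_A\,(\H^{d-1}\llcorner\p^*A(t){\cap}\Omega)\otimes dt$; all of these rest on~\eqref{assump:energyConvergence}, which forces no loss of mass. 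The hardest point is the boundary identification: one must rule out that diffuse-interface energy concentrating in an $\eps$-tubular neighborhood of~$\p\Omega$ contributes to the $\sigma$-limit in an uncontrolled way. The strict gap $\sigma \leq (1{-}\kappa)\psi$ in~\eqref{assump:Sigma2} is engineered precisely to prevent such a mismatch, and combining it with a tubular-neighborhood slicing near~$\p\Omega$, the trace theorem for $BV$ applied to $\chi_A$, and the identity~\eqref{eq:relaxedBoundaryDensity} yields the boundary measure identity above. Once both limits are in hand, passing to the limit in the stress-tensor identity yields~\eqref{eq:motionLaw} with the interior curvature weighted by $c_0$ and the contact-line contribution weighted by $\jump{\hat\sigma}$.
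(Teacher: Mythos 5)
Your compactness argument (Part~(i)) matches the paper's Lemma~\ref{lem:compactness} essentially verbatim: Modica--Mortola/Bogomol'nyi trick, Cauchy--Schwarz for~$\partial_t\psi_\eps$, $BV$-compactness, and the $1/2$-H\"older continuity in time. The setup of Part~(ii) — testing the weak formulation with~$\eps(B\cdot\nabla)u_\eps$ to produce the diffuse stress tensor and a boundary contribution via the Robin condition and tangential integration by parts — is also identical to~\eqref{eq:aux2}.

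Where your proposal diverges is in \emph{how} you pass to the limit in the quadratic terms and, crucially, in the boundary trace identification, and here there is a genuine gap. You phrase the task as two measure-valued weak-$*$ convergences plus equipartition and a rank-one blow-up argument, and you wave at a ``tubular-neighborhood slicing near~$\p\Omega$, the trace theorem for~$BV$ applied to~$\chi_A$, and the identity~\eqref{eq:relaxedBoundaryDensity}''. The problem is that trace convergence for the \emph{limit}~$\chi_A$ is of no use: what you need is that the \emph{trace of~$\psi_\eps$} on~$\p\Omega$ converges in~$L^1(\p\Omega)$ to the trace of~$c_0\chi_A$, i.e.\ that
$\sigma(u_\eps)\,d\H^{d-1}|_{\p\Omega} \to \jump{\hat\sigma}\,\H^{d-1}\llcorner(\p^*A(t)\cap\p\Omega)$,
and $L^1(\Omega)$-convergence alone never gives convergence of traces. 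The paper's mechanism is different and specific: from the time-sliced energy convergence it first obtains \emph{strict} $BV(\Omega)$-convergence $\psi_\eps(\cdot,t)\to c_0\chi_{A(t)}$ for a.e.~$t$ (Lemma~\ref{lem:convergenceTraces}). This is done by splitting $|\nabla\psi_\eps| = |\nabla(\psi_\eps - \tau\circ\psi_\eps)| + |\nabla(\tau\circ\psi_\eps)|$ with $\tau := \sigma\circ\psi^{-1}$, extending by zero to~$\R^d$ so that the boundary integral becomes a $BV(\R^d)$-seminorm, invoking lower semicontinuity, and then using the Fleming--Rishel coarea formula together with bijectivity of $s'\mapsto s'-\tau(s')$ on~$[0,c_0]$ (which is exactly where $\sigma\le(1{-}\kappa)\psi$ from~\eqref{assump:Sigma2}, equivalently $\tau'<1$, enters). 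Strict $BV$-convergence then automatically upgrades to $L^1$-trace convergence (\cite[Theorem~3.88]{AmbrosioFuscoPallara}), which is what makes the boundary term in the stress-tensor identity converge.

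The second, more technical point: the paper does not prove the rank-one structure of $\eps\nabla u_\eps\otimes\nabla u_\eps$ by a direct measure-theoretic blow-up, but by a freezing argument built on the \emph{localized relative entropy / tilt-excess} functionals~\eqref{def:locRelEntropy}--\eqref{def:locRelEntropyLimit}. These carry a linear representation~\eqref{eq:altRepRelEntropy}--\eqref{eq:altRepRelEntropyLimit} that pairs cleanly with the $L^1$-convergence of~$\psi_\eps$ and delivers the coercivity bounds~\eqref{eq:tiltExcessControl1}--\eqref{eq:tiltExcessControl4} on both the equipartition defect and the tilt $|\nu_\eps-\xi|^2$. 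Those bounds are then absorbed into a local freezing-and-optimization step (choosing~$\xi$ after a partition of unity). Your measure-theoretic identifications are morally equivalent, but as stated they presuppose the rank-one convergence rather than deriving it; you would still need a quantitative mechanism — some form of tilt-excess control analogous to~\eqref{eq:tiltExcessControl3}–\eqref{eq:tiltExcessControl4} — to justify the off-diagonal cancellation. Without the strict $BV$-convergence lemma and a concrete tilt-excess estimate, the two central passages to the limit in Part~(ii) remain unverified.
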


\subsection{Weak-strong uniqueness of $BV$ solutions to MCF with constant contact angle}	
The second main contribution of the present work is concerned with
the question of uniqueness and stability of $BV$ solutions to MCF with constant contact angle.
To this end, we start by recalling the key ingredient to our approach, namely
the notion of a calibrated evolution. This concept was first introduced
in the work~\cite{Fischer2020a} in the multiphase setting. In the present work,
we are only concerned with the setting of two phases. However, since we 
allow for contact of the interface with the boundary of the container
at a fixed angle, additional boundary conditions have to be encoded.
This has already been done in the recent work by Moser and the first author,
cf.\ \cite[Definition~2]{Hensel2021c}.
For convenience of the reader, we restate the definition.  

\begin{definition}
\label{def:calibratedEvolution}
Let $T\in (0,\infty)$ be a time horizon, and consider a one-parameter family
of open subsets $\mathscr{A}(t) \subset \Omega$, $t \in [0,T]$. Assume that
for each~$t\in [0,T]$, the set~$\mathscr{A}(t)$ is of finite perimeter in~$\R^d$,
and that there exists a finite family of $\H^{d{-}2}$-rectifiable sets 
$\Gamma_c(t) \subset \p\Omega$, $c\in\mathcal{C}$, such that the closure
of~$\p^*\mathscr{A}(t)$ is given by~$\p\mathscr{A}(t)$ and we have the
disjoint decomposition $\p\mathscr{A}(t) = (\p^*\mathscr{A}(t) {\cap} \Omega)
\cup (\p^*\mathscr{A}(t) {\cap} \p\Omega) \cup \bigcup_{c\in\mathcal{C}} \Gamma_c(t)$.
Denoting for~$t\in[0,T]$ by~$\chi_{\mathscr{A}(t)}$ the indicator function of~$\mathscr{A}(t)$,
we further assume that $\chi(x,t):=\chi_{\mathscr{A}(t)}(x)$
satisfies $\chi\in C([0,T];L^1(\R^d)) \cap BV(\R^d \times (0,T))$. Finally,
we assume that $\partial_t\chi \ll |\nabla\chi| \llcorner \big(\bigcup_{t\in [0,T]}
(\p^*\mathscr{A}(t) {\cap} \Omega) {\times} \{t\} \big)$.

The one-parameter family $\mathscr{A}(t)$, $t\in [0,T]$, is
called a \emph{calibrated evolution} with respect to the $L^2$-gradient
flow of the interfacial energy~\eqref{eq:energySharpInterface}
if there exists a tuple of maps~$(\xi,B,\vartheta)$ (which then
is referred to as an associated \emph{boundary adapted gradient flow calibration}) 
satisfying the following: First, the tuple~$(\xi,B,\vartheta)$ is regular in the sense of
\begin{align}
\xi &\in C^1\big(\overline{\Omega}{\times}[0,T];\R^d\big)
\cap C\big([0,T];C^2_{b}(\Omega;\R^d)\big),
\\ \label{eq_regularityB}
B &\in C\big([0,T];C^1(\overline{\Omega};\R^d)\big)
\cap C\big([0,T];C^2_{b}(\Omega;\R^d)\big),
\\ \label{eq_regularityWeight}
\vartheta &\in C^1_{b}\big(\Omega{\times}[0,T];[-1,1]\big) 
\cap C\big(\overline{\Omega}{\times}[0,T];[-1,1]\big).
\end{align}
Second, the pair of vector fields~$(\xi,B)$ satisfies the boundary conditions
\begin{align}
\label{eq:angleCondXi}
\xi(\cdot,t) \cdot \nu_{\p\Omega} &= \cos\alpha, 
&& t \in [0,T],
\\
\label{eq:angleCondB}
B(\cdot,t) \cdot \nu_{\p\Omega} &= 0,
&& t \in [0,T],
\end{align}
with the angle~$\alpha\in(0,\frac{\pi}{2}]$ being given by Young's law~\eqref{eq:YoungsLaw},
whereas the weight~$\vartheta$ is subject to the sign conditions
\begin{align}
\label{eq:signWeightExterior}
\vartheta(\cdot,t) &> 0 
&& \text{in the essential exterior of } \mathscr{A}(t) 
\text{ within } \Omega, \, t \in [0,T],
\\ \label{eq:signWeightInterior}
\vartheta(\cdot,t) &< 0 
&& \text{in the essential interior of } \mathscr{A}(t), \, t \in [0,T],
\\ \label{eq:signWeightInterface}
\vartheta(\cdot,t) &= 0 
&& \text{on } \p^*\mathscr{A}(t) \cap \Omega, \, t \in [0,T].
\end{align}
Third, the vector field~$\xi$ satisfies the coercivity conditions
\begin{align}
\xi(\cdot,t) &= \nu_{\p^*\mathscr{A}(t)}
&& \text{on } \p^*\mathscr{A}(t) \cap \Omega,
\\ \label{eq:coercivityLengthXi}
|\xi|(\cdot,t) &\leq 1 - c\min\{1,\mathrm{dist}^2(\cdot,\overline{\p^*\mathscr{A}(t) \cap \Omega})\}
&& \text{in } \Omega,
\end{align}
whereas the weight~$\vartheta$ is subject to the coercivity condition
\begin{align}
\min\{\dist(\cdot,\p\Omega),\dist(\cdot,\overline{\p^*\mathscr{A}(t) \cap \Omega}),1\}
&\leq C|\vartheta|(\cdot,t)
&& \text{in } \Omega
\end{align}
for some constants~$c\in (0,1)$ and~$C>0$ and all~$t \in [0,T]$. Fourth, 
the tuple~$(\xi,B,\vartheta)$ is subject to the approximate evolution equations
\begin{align}
\label{eq:timeEvolXi}
|\partial_t\xi + (B\cdot\nabla)\xi + (\nabla B)^\mathsf{T}\xi|(\cdot,t)
&\leq C\min\{1,\dist(\cdot,\overline{\p^*\mathscr{A}(t) \cap \Omega})\}
&& \text{in } \Omega,
\\ \label{eq:timeEvolLengthXi}
|\xi \cdot (\partial_t\xi + (B\cdot\nabla)\xi)|(\cdot,t)
&\leq C\min\{1,\mathrm{dist}^2(\cdot,\overline{\p^*\mathscr{A}(t) \cap \Omega})\}
&& \text{in } \Omega,
\\ \label{eq:timeEvolWeight}
|\partial_t\vartheta + (B\cdot\nabla)\vartheta|(\cdot,t)
&\leq C|\vartheta|(\cdot,t)
&& \text{in } \Omega
\end{align}
for some constant~$C>0$ and all~$t \in [0,T]$. Finally, it holds
\begin{align}
\label{eq:motionByMCFCalibration}
|B\cdot\xi + \nabla\cdot\xi|(\cdot,t)
&\leq C\min\{1,\dist(\cdot,\overline{\p^*\mathscr{A}(t) \cap \Omega})\}
&& \text{in } \Omega
\end{align}
for some constant~$C>0$ and all~$t \in [0,T]$. 
\end{definition}

Given a $BV$~solution and a calibrated evolution with associated boundary adapted
gradient flow calibration, both being defined up to a common finite time horizon $T\in (0,\infty)$,
we then introduce a relative entropy functional 
\begin{align}
\label{eq:relEntropy}
\E_{\mathrm{relEn}}(A(t)|\mathscr{A}(t)) :=  c_0 \int_{\p^*A(t) \cap \Omega}
\big(1 - \nu_{A(t)} \cdot \xi(\cdot,t)\big) \,d\H^{d-1}, \quad t \in [0,T],
\end{align}
as well as a bulk error functional
\begin{align}
\label{eq:bulkError}
\E_{\mathrm{bulk}}(A(t)|\mathscr{A}(t)) := 
\int_{A(t) \Delta \mathscr{A}(t)} |\vartheta|(\cdot,t) \,dx, \quad t \in [0,T].
\end{align}
These two functionals turn out to be suitable measures for the difference
between the given $BV$~solution and the given calibrated evolution since 
they satisfy stability estimates in the following form.

\begin{theorem}
\label{theo:weakStrongUniqueness}
Let $d\geq 2$. Consider $T\in (0,\infty)$
and let $\mathscr{A}(t)$, $t\in [0,T]$, be a calibrated evolution
in the sense of Definition~\ref{def:calibratedEvolution}.
Let $A(t)$, $t\in [0,T^*]$ where $T^* \in (T,\infty)$, be
a distributional solution to MCF with constant contact angle~$\alpha$
in the sense of Definition~\ref{def:weak sol}.

For every associated boundary adapted gradient flow calibration~$(\xi,B,\vartheta)$,
there then exists a constant $C>0$ depending only on~$(\xi,B,\vartheta)$ such that
for almost every $T' \in (0,T)$
\begin{align}
\label{eq:stabilityEstimateRelEntropy}
\E_{\mathrm{relEn}}(A(T')|\mathscr{A}(T')) &\leq 
\E_{\mathrm{relEn}}(A(0)|\mathscr{A}(0))
+ C \int_{0}^{T'} \E_{\mathrm{relEn}}(A(t)|\mathscr{A}(t)) \,dt,
\\ \label{eq:stabilityEstimateBulkError}
\E_{\mathrm{bulk}}(A(T')|\mathscr{A}(T'))
&\leq \E_{\mathrm{bulk}}(A(0)|\mathscr{A}(0)) + 
\E_{\mathrm{relEn}}(A(0)|\mathscr{A}(0))
\\&~~~\nonumber
+ C \int_{0}^{T'} \E_{\mathrm{bulk}}(A(t)|\mathscr{A}(t))
+ \E_{\mathrm{relEn}}(A(t)|\mathscr{A}(t))\,dt.
\end{align}
In particular, we obtain weak-strong uniqueness in form of
\begin{align}
\nonumber
A(0) &= \mathscr{A}(0) \text{ up to a set of zero Lebesgue measure}
\\& \label{eq:weakStrongUniqueness}
\Rightarrow
A(t) = \mathscr{A}(t) \text{ up to a set of zero Lebesgue measure for a.e.\ } t \in (0,T).
\end{align}
\end{theorem}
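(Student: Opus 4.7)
The strategy follows the relative entropy method of \cite{Fischer2020a}, adapted to the boundary-contact setting along the lines suggested in \cite{Hensel2021c}. The plan is to first rewrite $\E_{\mathrm{relEn}}$ in a form that makes its time derivative accessible from the BV-solution side, then exploit the calibration identities in Definition~\ref{def:calibratedEvolution} to dominate the resulting terms by $\E_{\mathrm{relEn}}$ itself plus quantities absorbed by the dissipation $c_0\int V^2\,d\H^{d-1}$, and finally close by Gronwall.

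The first step is a reformulation using the Gauss--Green formula for sets of finite perimeter applied to $\xi(\cdot,t)$ in $A(t)\subset\Omega$. The reduced boundary of $A(t)$ decomposes into $\p^*A(t)\cap\Omega$ and $\p^*A(t)\cap\p\Omega$, the outward normal on the latter piece being $-\nu_{\p\Omega}$. Using the angle condition $\xi\cdot\nu_{\p\Omega}=\cos\alpha$ from \eqref{eq:angleCondXi} and Young's law $c_0\cos\alpha=\jump{\hat\sigma}$, one obtains the crucial identity
\begin{align*}
\E_{\mathrm{relEn}}(A(t)|\mathscr{A}(t)) = E(A(t)) + c_0\int_{A(t)} \nabla\cdot\xi(\cdot,t)\,dx,
\end{align*}
converting the interfacial term $-c_0\int \nu_{A}\cdot\xi\,d\H^{d-1}$ together with the container-boundary piece into a bulk integral. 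This is the key algebraic move, because the bulk integral can be differentiated using \eqref{eq:evolSet} with the smooth test function $\zeta=\nabla\cdot\xi$, while the surface integral cannot.

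The second step differentiates in time: the optimal dissipation \eqref{eq:energyDissipation} controls $E(A(T'))-E(A(0))$ by $-c_0\int_0^{T'}\!\!\int V^2\,d\H^{d-1}dt$, and applying \eqref{eq:evolSet} to $\zeta=\nabla\cdot\xi$ produces a bulk term $c_0\int\int_{A}\p_t(\nabla\cdot\xi)\,dx\,dt$ together with an interface term $c_0\int\int V(\nabla\cdot\xi)\,d\H^{d-1}dt$. Using the approximate mean-curvature equation \eqref{eq:motionByMCFCalibration} on the interface, the interface term equals $-c_0\int\int V(B\cdot\xi)\,d\H^{d-1}dt$ up to an error of order $\mathrm{dist}(\cdot,\overline{\p^*\mathscr{A}\cap\Omega})$ which is bounded via $(1-\nu_A\cdot\xi)\geq\frac12|\nu_A-\xi|^2$ and Young's inequality $|V\cdot\mathrm{error}|\leq \tfrac{1}{2c_0}V^2+C\,\E_{\mathrm{relEn}}$-density. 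Splitting $V(B\cdot\xi)=V B\cdot\nu_A+VB\cdot(\xi-\nu_A)$, the second piece is again absorbed by the dissipation and relative entropy, while the first is handled via the motion law \eqref{eq:motionLaw}: tested against $B$, which satisfies $B\cdot\nu_{\p\Omega}=0$ by \eqref{eq:angleCondB}, this rewrites $\int\int V B\cdot\nu_A\,d\H^{d-1}dt$ as a surface divergence of $B$ plus a container-boundary term involving $\jump{\hat\sigma}$. The bulk term $\p_t(\nabla\cdot\xi)$ is reshaped using \eqref{eq:timeEvolXi}--\eqref{eq:timeEvolLengthXi}, which encode transport of $\xi$ along $B$, so that all remaining pieces combine into quantities of the form $\int|\nu_A-\xi|^2\,d\H^{d-1}\lesssim\E_{\mathrm{relEn}}$ plus lower-order container-boundary contributions that telescope thanks to \eqref{eq:angleCondXi}. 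Gronwall's lemma then yields \eqref{eq:stabilityEstimateRelEntropy}.

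For the bulk estimate \eqref{eq:stabilityEstimateBulkError}, I write $\E_{\mathrm{bulk}}=\int_\Omega(\chi_{A}-\chi_{\mathscr{A}})\vartheta\,dx$, which is justified by the sign conditions \eqref{eq:signWeightExterior}--\eqref{eq:signWeightInterface}. Differentiating and using \eqref{eq:evolSet} for $A$ together with the analogous identity for $\mathscr{A}$ (with normal velocity $B\cdot\nu_{\mathscr{A}}$ encoded in the calibration via the absolute-continuity assumption on $\p_t\chi$), combined with the transport estimate \eqref{eq:timeEvolWeight}, reduces everything to terms of the form $\int(V-B\cdot\nu_A)\vartheta\,d\H^{d-1}$, which are absorbed by the dissipation plus $\E_{\mathrm{relEn}}$ through Young's inequality and the coercivity \eqref{eq:coercivityLengthXi}. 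The initial error picks up $\E_{\mathrm{relEn}}(A(0)|\mathscr{A}(0))$ precisely because of this Young-inequality step. The weak-strong uniqueness conclusion \eqref{eq:weakStrongUniqueness} follows: if $A(0)=\mathscr{A}(0)$ a.e., then $\E_{\mathrm{relEn}}(0)=\E_{\mathrm{bulk}}(0)=0$, Gronwall forces both to vanish for a.e.\ $t\in(0,T)$, and the coercivity of $|\vartheta|$ away from $\p\Omega$ and $\p^*\mathscr{A}\cap\Omega$ implies $A(t)=\mathscr{A}(t)$ a.e.

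The principal obstacle is the bookkeeping of boundary terms that were absent or trivialized in the closed-surface or $90^\circ$-angle settings: every integration-by-parts in $\Omega$ produces a boundary integral over $\p\Omega$, and these must be combined using $\xi\cdot\nu_{\p\Omega}=\cos\alpha$, $B\cdot\nu_{\p\Omega}=0$, and Young's law in exactly the right sequence so that the quadratic surplus $\jump{\hat\sigma}\H^{d-1}(\p^*A\cap\p\Omega)$ hidden in $\E_{\mathrm{relEn}}$ reappears naturally and cancels. A secondary technical point is that only an approximate mean-curvature identity is available for $\xi$ near the container boundary, which forces careful use of the distance weights in \eqref{eq:coercivityLengthXi} and \eqref{eq:motionByMCFCalibration} to avoid losing control near the contact set $\overline{\p^*\mathscr{A}\cap\Omega}\cap\p\Omega$.
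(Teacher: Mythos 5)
Your proposal matches the paper's proof in all essential respects: the key algebraic identity $\E_{\mathrm{relEn}}(A|\mathscr{A}) = E(A) + c_0\int_A \nabla\cdot\xi$ obtained from \eqref{eq:angleCondXi} and Young's law, the use of $c_0(\nabla\cdot\xi)$ as test function in \eqref{eq:evolSet}, the optimal dissipation \eqref{eq:energyDissipation}, testing the motion law \eqref{eq:motionLaw} with $B$, the calibration estimates \eqref{eq:timeEvolXi}--\eqref{eq:timeEvolLengthXi}, \eqref{eq:motionByMCFCalibration}, and finally the bulk-error treatment via $\E_{\mathrm{bulk}} = \int(\chi_A - \chi_\mathscr{A})\vartheta$ and the transport estimate \eqref{eq:timeEvolWeight}, all closed by Young and Gronwall. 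The only small inaccuracies are a sign slip (the quantity absorbed by the dissipation is $V + B\cdot\xi$, equivalently $V + B\cdot\nu_A$ up to $B\cdot(\nu_A-\xi)$, not $V - B\cdot\nu_A$), and the description of how $\partial_t\chi_\mathscr{A}$ drops out: the paper does not need to identify a normal velocity $B\cdot\nu_\mathscr{A}$ for the calibrated evolution, but rather uses that $\vartheta$ vanishes on $\p^*\mathscr{A}(t)\cap\Omega$ combined with $\partial_t\chi_\mathscr{A} \ll |\nabla\chi_\mathscr{A}|\llcorner(\bigcup_t (\p^*\mathscr{A}(t)\cap\Omega)\times\{t\})$; these are cosmetic and do not affect the validity of the approach.
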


The previous result is of course conditional in the sense that
we assume the existence of a calibrated evolution. To obtain
an unconditional statement in the sense of a usual weak-strong
uniqueness principle, one needs to show that sufficiently 
regular solutions to MCF with constant contact angle are calibrated.
This part of the story is worked out by Moser and the first author
in~\cite[Theorem~4]{Hensel2021c} for the planar setting $d=2$;
however, one strongly expects that the remaining physically relevant 
case of $d=3$ does not entail any additional conceptual difficulties.
In particular, the required construction is most probably substantially less subtle
than the triple line construction from~\cite{Hensel2021} for a double bubble cluster.
Anyway, in combination with our Theorem~\ref{theo:weakStrongUniqueness},
the existence result of \cite{Hensel2021c} entails in the planar case
weak-strong uniqueness and quantitative stability of $BV$ solutions
to~MCF with constant contact angle (with respect to a class of sufficiently
regular solutions, cf.\ \cite[Definition~10]{Hensel2021c} for details).
%

\section{Compactness and conditional convergence} \label{sec:conv}
This section is devoted to the proof of Theorem~\ref{theo:existence}.
The first subsection outlines the main steps of the proof and
formulates along the way the required intermediate results.
Proofs for these as well as Theorem~\ref{theo:existence} are postponed
to the second and third subsection, respectively.

\subsection{Main steps of the proof and intermediate results}
As a preliminary for this subsection, we start making precise what we mean
by a weak solution to~\eqref{eq:AllenCahn1}--\eqref{eq:AllenCahn3}.

\begin{definition}
\label{def:weakSolAllenCahn}
Let $T \in (0,\infty)$, $d \geq 2$, and consider an initial phase field $u_{\eps,0}$
with finite energy $E_\eps[u_{\eps,0}] < \infty$. Let the pair of nonlinearities $(W,\sigma)$
satisfy~\eqref{assump:W} and~\eqref{assump:Sigma1}.

A measurable function $u_\eps\colon\Omega \times [0,T) \to \R$ is henceforth
called a \emph{weak solution to~\emph{\eqref{eq:AllenCahn1}--\eqref{eq:AllenCahn3}}} if
\begin{enumerate}[1.]
			\item it satisfies $u_\eps \in H^1(0,T;L^2(\Omega)) 
						\cap L^\infty(0,T;H^1(\Omega) {\cap} L^4(\Omega))$,
			\item the initial data is attained in the sense of
						$u_\eps(\cdot,0) = u_{\eps,0}$ a.e.\ in $\Omega$,
			\item and finally for all $T' \in (0,T)$ and all test functions
						$\zeta \in C^\infty_c(\overline{\Omega} \times [0,T))$ it holds
						\begin{align}
						\label{eq:weakEvolEquation}
						\int_{0}^{T'} \int_{\Omega} \zeta \partial_tu_\eps \,dx dt
						= - \int_{0}^{T'} \int_{\Omega} \nabla\zeta \cdot \nabla u_\eps
						+ \zeta \frac{1}{\eps^2}W'(u_\eps) \,dx dt
						- \int_{0}^{T'} \int_{\p\Omega} \zeta \frac{1}{\eps}\sigma'(u_\eps) \,d\mathcal{H}^{d-1} dt.
						\end{align}
\end{enumerate}
\end{definition}

Existence of weak solutions and further properties of such (e.g., higher regularity) are
collected in~\cite[Lemma~6, Lemma~7 and Lemma~8]{Hensel2021c}.
We summarize these in the following result; for a proof, one may 
consult~\cite[Appendix~A]{Hensel2021c}.

\begin{lemma}
\label{lem:propertiesWeakSol}
In the setting of Definition~\ref{def:weakSolAllenCahn}, there always
exists a unique weak solution to~\emph{\eqref{eq:AllenCahn1}--\eqref{eq:AllenCahn3}}.
If we in addition assume that 
\begin{align}
\label{eq:LinftyBoundInitially}
|u_{\eps,0}| \leq 1 \quad \text{a.e.\ in } \Omega,
\end{align}
then the unique weak solution to~\emph{\eqref{eq:AllenCahn1}--\eqref{eq:AllenCahn3}}
is subject to the following additional properties:
\begin{enumerate}[1.]
\item (Uniform boundedness) The $L^\infty$-bound is conserved by the flow in the sense that
\begin{align}
\label{eq:LinftyBound}
|u_\eps(\cdot,T')| \leq 1 \quad \text{a.e.\ in } \Omega \text{ for all } T' \in [0,T).
\end{align}
\item (Higher regularity and interpretation of boundary condition) We have
\begin{align}
\label{eq:higherReg} 
 u_\eps \in L^2(0,T;H^2(\Omega))\cap C([0,T];H^1(\Omega))
\quad\text{and}\quad
\nabla\partial_t u_\eps \in L^2_{\mathrm{loc}}(0,T;L^2(\Omega)).
\end{align}
In particular, the PDE~\eqref{eq:AllenCahn1} is satisfied pointwise a.e.\ throughout $\Omega \times (0,T)$
and the nonlinear Robin boundary condition~\eqref{eq:AllenCahn3} holds in form of
\begin{align}
\label{eq:boundaryCond}
- \int_{\p\Omega} \zeta \frac{1}{\eps}\sigma'(u_\eps(\cdot,T')) \,d\mathcal{H}^{d-1}
= \int_{\Omega} \zeta\,\Delta u_\eps(\cdot,T') \,dx
+ \int_{\Omega} \nabla\zeta\cdot\nabla  u_\eps(\cdot,T') \,dx
\end{align}
for a.e.\ $T' \in (0,T)$ and all test functions $\zeta \in C^\infty(\overline{\Omega})$.
\item (Energy dissipation identity) For all $T' \in (0,T)$ it holds
\begin{align}
\label{eq:dissipIdentity}
E_\eps[ u_\eps(\cdot,T')] 
+ \int_{0}^{T'} \int_{\Omega} \eps|\partial_t  u_\eps|^2 \,dx dt
= E_\eps[ u_{\eps,0}].
\end{align}
\end{enumerate}
\end{lemma}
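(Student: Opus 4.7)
The plan is to construct the weak solution by a Galerkin approximation, passing to the limit via a priori bounds that come directly from the gradient-flow structure. Concretely, pick an orthonormal basis $\{e_k\}$ of $L^2(\Omega)$ dense in $H^1(\Omega)$ (for instance the eigenfunctions of the Neumann Laplacian), project the evolution onto the finite-dimensional subspace $V_N = \mathrm{span}\{e_1,\dots,e_N\}$, and observe that the resulting system of ODEs is locally Lipschitz in the coefficients thanks to $W' \in C^{0,1}_{\mathrm{loc}}$ and $\sigma' \in C^{0,1}$ (by~\eqref{assump:Sigma1}). The Galerkin scheme inherits the energy dissipation identity $E_\eps[u_\eps^N(T')] + \eps\int_0^{T'}\|\partial_t u_\eps^N\|_{L^2}^2\,dt = E_\eps[u_{\eps,0}^N]$ from the variational structure, which yields uniform bounds in $L^\infty(0,T;H^1 \cap L^4)$ and in $H^1(0,T;L^2)$; in particular solutions are global. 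Aubin--Lions compactness gives strong $L^2(\Omega\times(0,T))$ convergence along a subsequence, hence a.e.\ convergence, so that $W'(u_\eps^N)$ and $\sigma'(u_\eps^N)$ pass to the limit by dominated convergence, and one obtains a weak solution in the sense of Definition~\ref{def:weakSolAllenCahn}. Uniqueness is proved by testing the weak formulation satisfied by the difference of two solutions with that difference itself, using the local Lipschitz continuity of $W'$ and $\sigma'$ together with a trace inequality and Gronwall's lemma.

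For the $L^\infty$-bound under $|u_{\eps,0}|\leq 1$, I would test the PDE with the Stampacchia truncation $(u_\eps-1)_+$. The crucial point is that $\supp\sigma'\subset[-1,1]$ forces the boundary contribution $\int_{\p\Omega}\eps^{-1}\sigma'(u_\eps)(u_\eps-1)_+\,d\H^{d-1}$ to vanish, while $W'(s)(s-1)_+ = 2s(s^2-1)(s-1)_+ \geq 0$ takes care of the bulk nonlinearity. One arrives at $\tfrac{1}{2}\tfrac{d}{dt}\|(u_\eps(\cdot,t)-1)_+\|_{L^2}^2 \leq 0$, and the initial datum makes the right-hand side zero at $t=0$. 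The bound $u_\eps\geq -1$ is analogous.

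For the higher regularity~\eqref{eq:higherReg}, I would use that after the $L^\infty$-bound is established, both $W'(u_\eps)$ and $\sigma'(u_\eps)$ are uniformly bounded. From the dissipation identity $\partial_t u_\eps \in L^2(\Omega\times(0,T))$, and rewriting~\eqref{eq:AllenCahn1} as an elliptic problem $-\Delta u_\eps = -\partial_t u_\eps - \eps^{-2}W'(u_\eps)$ with inhomogeneous Neumann data $\nu_{\p\Omega}\cdot\nabla u_\eps = \eps^{-1}\sigma'(u_\eps) \in L^\infty$ on each time slice, standard Neumann elliptic regularity (using smoothness of $\p\Omega$) yields $u_\eps(\cdot,t) \in H^2(\Omega)$ for a.e.\ $t$ with $\int_0^T \|u_\eps\|_{H^2}^2\,dt < \infty$. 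From this the PDE and the boundary condition~\eqref{eq:boundaryCond} hold pointwise a.e.\ in the classical sense. The continuity $u_\eps \in C([0,T];H^1(\Omega))$ follows by combining the weak continuity in $H^1$ (coming from $u_\eps \in H^1(0,T;L^2) \cap L^\infty(0,T;H^1)$) with the continuity of $t\mapsto E_\eps[u_\eps(\cdot,t)]$, which itself is inherited from the dissipation identity. To obtain $\nabla\partial_t u_\eps \in L^2_{\mathrm{loc}}(0,T;L^2)$, formally differentiate~\eqref{eq:AllenCahn1} in time and test the resulting linear equation for $v:=\partial_t u_\eps$ with $\chi(t)^2 v$, where $\chi$ is a smooth time cutoff supported away from $t=0$; the boundary term produces $\eps^{-1}\sigma''(u_\eps)v^2$, which is controlled by an $L^\infty$-bound on $\sigma''$ and a trace inequality, and Gronwall closes the estimate. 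Finally, the energy dissipation identity~\eqref{eq:dissipIdentity} is obtained by testing the weak formulation with $\partial_t u_\eps$, which is admissible thanks to the regularity just established and to the chain rule in $H^1$ for the $C^{1,1}$ nonlinearities $W$ and $\sigma$.

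The main conceptual obstacle is the interplay of the nonlinear Robin boundary condition with the interior equation: unlike the pure Dirichlet or linear Neumann case, one cannot invoke a black-box parabolic regularity theorem, and the boundary nonlinearity must be tracked through every step. The key simplifying observation is that, once the $L^\infty$-bound is in hand, the boundary nonlinearity behaves as a bounded perturbation, so linear Neumann elliptic regularity (slice-wise in time) suffices. Everything else is a careful, but routine, execution of the gradient-flow strategy using the structural assumptions~\eqref{assump:W} and~\eqref{assump:Sigma1}.
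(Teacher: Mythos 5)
The paper itself does not prove this lemma: it delegates the entire statement to \cite[Lemma~6, Lemma~7, Lemma~8 and Appendix~A]{Hensel2021c}, so there is no internal argument against which to benchmark your proposal. On its own terms your sketch follows the standard playbook and is essentially sound: Galerkin approximation with the energy dissipation inherited at the finite-dimensional level, Aubin--Lions compactness, an $L^2$-Gronwall argument for uniqueness exploiting the monotonicity of $s\mapsto s^3$ in $W'$ and the Lipschitz bound on $\sigma'$ together with a trace inequality, a Stampacchia truncation using $\supp\sigma'\subset[-1,1]$ and $W'(s)(s{-}1)_+\geq 0$ for the $L^\infty$-bound, slice-wise elliptic regularity for~\eqref{eq:higherReg}, a time-differentiated equation tested with a cut-off for $\nabla\partial_t u_\eps\in L^2_{\mathrm{loc}}$, and testing with $\partial_t u_\eps$ for~\eqref{eq:dissipIdentity}.

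Two technical points deserve more care than you give them. First, for the slice-wise $H^2$ Neumann regularity, boundedness of the Neumann datum $\eps^{-1}\sigma'(u_\eps)$ is \emph{not} sufficient; one needs $\sigma'(u_\eps(\cdot,t))|_{\p\Omega}\in H^{1/2}(\p\Omega)$. This does hold, but the right reason is that $\sigma'$ is (globally) Lipschitz by \eqref{assump:Sigma1} and $u_\eps(\cdot,t)\in H^1(\Omega)$ for a.e.\ $t$, so $\sigma'(u_\eps(\cdot,t))\in H^1(\Omega)$ with an $H^{1/2}(\p\Omega)$ trace and the $L^2_t$-integrability needed for $u_\eps\in L^2(0,T;H^2(\Omega))$. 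Second, your derivation of $C([0,T];H^1(\Omega))$ appeals to continuity of $t\mapsto E_\eps[u_\eps(\cdot,t)]$, which you justify via the dissipation identity that you only prove at the very end — there is a circularity to untangle. The clean order is: first extract from the Galerkin scheme a dissipation \emph{inequality} (upper bound on the energy, lower semicontinuity of the dissipation), use this together with weak $H^1$-continuity from $H^1(0,T;L^2)\cap L^\infty(0,T;H^1)$ to get strong $C([0,T];H^1)$ continuity, and only then upgrade the inequality to the equality~\eqref{eq:dissipIdentity} by testing with $\partial_t u_\eps$ once the regularity makes this admissible. Finally, the formal time-differentiation underlying $\nabla\partial_t u_\eps\in L^2_{\mathrm{loc}}(0,T;L^2(\Omega))$ should be made rigorous via difference quotients in time, which requires a little bookkeeping around the nonlinear Robin condition (using $\sigma\in C^{1,1}$ so that $\sigma''\in L^\infty$); your sketch is consistent with this but does not quite carry it out.
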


\subsubsection{Compactness}
With these preliminaries in place, the first step consists of
the extraction of an accumulation point of the sequence $(u_\eps)_{\eps > 0}$.
This is done along the lines of a standard compactness argument
which in turn is based on the well-known 
Modica--Mortola/Bogomol'nyi-trick~\cite{modicamortola, bogomolnyi}.
More precisely, recalling the definition~\eqref{def:psi}
of the function~$\psi$ one defines the map
\begin{align}
\label{def:approxIndicator}
\psi_\eps(x,t) := \psi(u_\eps(x,t)) = \int_{-1}^{u_\eps(x,t)}
\sqrt{2W(s')} \,ds',\quad (x,t) \in \Omega \times (0,T).
\end{align}
By the chain rule, $(\nabla,\partial_t)\psi_\eps = \sqrt{2 W(u_\eps)} (\nabla,\partial_t)u_\eps$
so that thanks to H\"{o}lder's inequality, the assumptions~\eqref{eq:uniformBoundInitialEnergy}
and~\eqref{eq:uniformLinftyBoundInitialData}, and the energy dissipation identity~\eqref{eq:dissipIdentity}
one obtains
\begin{align*}
\int_{0}^{T} \int_{\Omega} |(\nabla,\partial_t)\psi_\eps| \,dx dt
&\leq \sqrt{2} \bigg(\int_{0}^{T} \int_{\Omega} \frac{1}{\eps}W(u_\eps) \,dx dt\bigg)^\frac{1}{2}
\bigg(\int_{0}^{T} \int_{\Omega} \eps|(\nabla,\partial_t)u_\eps|^2 \,dx dt\bigg)^\frac{1}{2}
\\&
\lesssim_{T} \sup_{\eps > 0} E_\eps(u_{\eps,0}) < \infty. 
\end{align*}
The bound of the previous display entails by a standard compactness argument
the following convergence result~\eqref{eq:convergence}; for a proof, we refer to the
literature (e.g., the $\Gamma$-convergence result of Modica~\cite{Modica1987}). 
The lower semi-continuity statement~\eqref{eq:uniformBoundLimitEnergy}
follows from \cite[Proposition~1.2]{Modica1987} together with the already
mentioned Modica--Mortola/Bogomol'nyi-trick. The familiar $\frac{1}{2}$-H\"{o}lder continuity
of the volume of the evolving phase, see~\eqref{eq:contVolumePhases}, in turn 
follows from the following variant of the previous display
\begin{align*}
\int_{\Omega} |\psi_\eps(x,t) - \psi_\eps(x,s)| \,dx
&\leq \sqrt{2} \bigg(\int_{s}^{t} \int_{\Omega} \frac{1}{\eps}W(u_\eps) \,dx dt\bigg)^\frac{1}{2}
\bigg(\int_{0}^{T} \int_{\Omega} \eps|\partial_t u_\eps|^2 \,dx dt\bigg)^\frac{1}{2}
\\&
\leq \sqrt{2(t-s)}\sup_{\eps>0}E_\eps(u_{\eps,0}).
\end{align*}

\begin{lemma}
\label{lem:compactness}
In the setting of Theorem~\ref{theo:existence} i), there exists
a subsequence $\eps \downarrow 0$ and a one-parameter family
of sets of finite perimeter $A(t) \subset \Omega$, $t \in [0,T]$,
such that 
\begin{align}
\label{eq:convergence}
\psi_\eps &\to \psi_0 := \psi(2\chi_A{-}1) = c_0\chi_A
&&\text{strongly in } L^1(\Omega\times(0,T)) \text{ as } \eps \downarrow 0,
\\
\label{eq:convergence2}
\psi_\eps & \to \psi_0
&&\text{weakly* in } BV(\Omega\times(0,T)) \text{ as } \eps \downarrow 0,
\\ \label{eq:uniformBoundLimitEnergy}
\esssup_{t\in(0,T)} E(A(t)) &\leq E(A(0))
&&\text{as } \eps \downarrow 0,
\end{align}
where $\chi_A(x,t) := \chi_{A(t)}(x)$ for all $(x,t) \in \Omega \times [0,T]$
and $c_0$ is the surface tension constant~\eqref{def:surfaceTensionInterface}.
Moreover, after possibly modifying~$\chi_A$ on a null set of positive times $\mathcal{N}\subset(0,T)$
it holds \emph{for all} $0 \leq s,t \leq T$
\begin{align}
\label{eq:contVolumePhases}
c_0\int_{\Omega} |\chi_{A(t)} - \chi_{A(s)}| \,dx \leq \sqrt{2|t-s|}\sup_{\eps>0}E_\eps(u_{\eps,0}).
\end{align}
\end{lemma}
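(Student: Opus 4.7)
The preamble has essentially supplied the two quantitative inputs. The Modica--Mortola/Bogomol'nyi AM--GM bound $\sqrt{2W(u_\eps)}|\nabla u_\eps| \leq \tfrac{\eps}{2}|\nabla u_\eps|^2 + \tfrac{1}{\eps}W(u_\eps)$, the analogous Cauchy--Schwarz estimate for $\partial_t\psi_\eps = \sqrt{2W(u_\eps)}\partial_t u_\eps$, and the dissipation identity~\eqref{eq:dissipIdentity} together with~\eqref{eq:uniformBoundInitialEnergy} produce $\|\psi_\eps\|_{BV(\Omega\times(0,T))} \lesssim_T \sup_\eps E_\eps(u_{\eps,0})$; meanwhile~\eqref{eq:LinftyBound} confines $\psi_\eps$ to $[0,c_0]$. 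Rellich compactness for $BV$ therefore extracts a subsequence with $\psi_\eps\to\psi_0$ in $L^1(\Omega\times(0,T))$ (and a.e.\ after a further extraction), and by lower semicontinuity of the total variation also weakly-* in $BV$. To identify $\psi_0$, the estimate $\int_0^T\int_\Omega \tfrac{1}{\eps}W(u_\eps)\,dxdt \leq T\sup_\eps E_\eps(u_{\eps,0})$ forces $W(u_\eps)\to 0$ a.e., hence $u_\eps\to\pm 1$ a.e., so $\psi_0 \in \{0,c_0\}$ a.e.\ and $A := \{\psi_0 = c_0\} \subset \Omega \times (0,T)$ is a set of locally finite perimeter realising $\psi_0 = c_0\chi_A$. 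This yields~\eqref{eq:convergence} and~\eqref{eq:convergence2}.

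For the slicewise statements, I would apply Fubini to the $L^1(\Omega\times(0,T))$-convergence to pick a further subsequence and a null set $\mathcal{N}\subset(0,T)$ with $\psi_\eps(\cdot,t) \to c_0\chi_{A(t)}$ in $L^1(\Omega)$ for every $t \notin \mathcal{N}$, where $A(t)$ denotes the $t$-slice of $A$. The $\tfrac{1}{2}$-Hölder estimate displayed immediately before the lemma is uniform in $\eps$ and so passes to the limit for $s,t \notin \mathcal{N}$. By uniform continuity, $t \mapsto \chi_{A(t)}$ admits a unique continuous extension from $[0,T]\setminus\mathcal{N}$ to all of $[0,T]$; redefining $A(t)$ on $\mathcal{N}$ accordingly produces~\eqref{eq:contVolumePhases} for every $s,t \in [0,T]$ together with $\chi_A \in C([0,T];L^1(\Omega))$. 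Finally, for $t \notin \mathcal{N}$, the $\Gamma$-liminf inequality of Modica~\cite[Proposition~1.2]{Modica1987}, which via the Bogomol'nyi trick also captures the boundary contribution, yields $E(A(t)) \leq \liminf_\eps E_\eps(u_\eps(\cdot,t))$; combined with the time-monotonicity of $E_\eps(u_\eps(\cdot,t))$ implied by~\eqref{eq:dissipIdentity} and the initial energy convergence~\eqref{eq:initialEnergyConvergence}, the right-hand side is bounded above by $E(A(0))$, which establishes~\eqref{eq:uniformBoundLimitEnergy}.

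The main obstacle is coordinating the slicewise extraction so that the identification $\psi_0 = c_0\chi_A$, the passage of the Hölder estimate to the limit, and Modica's $\Gamma$-liminf in individual slices all operate along a single subsequence with a common exceptional null set $\mathcal{N}$; this is precisely why the continuous-in-time representative of $\chi_A$ must be constructed by continuous extension from a full-measure set of good times rather than read off directly from the $L^1(\Omega\times(0,T))$-limit. Once this bookkeeping is in place, each remaining ingredient is classical.
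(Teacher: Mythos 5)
Your proposal is correct and follows essentially the same route the paper sketches: the Modica--Mortola/Bogomol'nyi-trick plus the dissipation identity yield the space-time $BV$ bound on $\psi_\eps$, Rellich compactness and lower semicontinuity give \eqref{eq:convergence}--\eqref{eq:convergence2}, Modica's $\Gamma$-liminf combined with energy monotonicity and \eqref{eq:initialEnergyConvergence} give \eqref{eq:uniformBoundLimitEnergy}, and the $\frac12$-Hölder estimate from the preamble passes to the limit after a Fubini extraction and continuous extension. The paper does not write out a full proof but refers to the literature for precisely these steps, so your filled-in version is faithful to it; the only cosmetic remark is that identifying $\psi_0\in\{0,c_0\}$ a.e.\ is most cleanly done by combining the a.e.\ convergence of $\psi_\eps$ (so that $u_\eps\to\psi^{-1}(\psi_0)$ a.e.) with $W(u_\eps)\to 0$ a.e., rather than inferring convergence of $u_\eps$ from $W(u_\eps)\to 0$ alone.
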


\subsubsection{Step~1 in the derivation of the motion law~\eqref{eq:motionLaw}: the curvature term}
We move on with discussing the first step towards the verification
of the desired motion law~\eqref{eq:motionLaw}. To this end, given a sufficiently regular test vector field 
$B \in C_c^1(\overline{\Omega} \times [0,T);\R^d)$ with $B \cdot \nu_{\p \Omega} =0$ 
on $\p\Omega \times (0,T)$, we test~\eqref{eq:weakEvolEquation} with $ \eps (B\cdot\nabla) u_\eps$
which indeed is an admissible test function due to
the regularity~\eqref{eq_regularityB} and~\eqref{eq:higherReg}:
\begin{align}
\nonumber
&- \int_{0}^{T} \int_{\Omega} \nabla \big(\eps (B\cdot\nabla) u_\eps\big) \cdot \nabla u_\eps
																+ \big(\eps (B\cdot\nabla) u_\eps\big)  \frac{1}{\eps^2}W'(u_\eps) \,dx dt
\\& \label{eq:aux1}
- \int_{0}^{T} \int_{\p\Omega} \big(\eps (B\cdot\nabla) u_\eps\big) 
\frac{1}{\eps}\sigma'(u_\eps) \,d\mathcal{H}^{d-1} dt
= \int_{0}^{T} \int_{\Omega} \big(\eps (B\cdot\nabla) u_\eps\big) \p_t u_\eps  \,dx dt.
\end{align}
We will now rewrite the three left hand side terms from the last display
in a form which resembles the desired structure of the two left hand side terms
of~\eqref{eq:motionLaw}. We remark that all of the subsequent computations are
justified thanks to the higher regularity~\eqref{eq:higherReg}. First,
by means of the chain rule in form of $\sigma'(u_\eps) (B\cdot\nabla)u_\eps
= (B\cdot\nabla)\sigma(u_\eps)$, an integration by parts on the boundary
of the container~$\Omega$ and recalling that $B$ is tangential along it,
cf.\ \eqref{eq:angleCondB}, we obtain
\begin{align*}
- \int_{0}^{T} \int_{\p\Omega} \big(\eps (B\cdot\nabla) u_\eps\big) 
\frac{1}{\eps}\sigma'(u_\eps) \,d\mathcal{H}^{d-1} dt
= \int_{0}^{T} \int_{\p\Omega} \sigma(u_\eps) 
(I_d {-} \nu_{\p\Omega}\otimes \nu_{\p \Omega}) : \nabla B \,d\mathcal{H}^{d-1} dt.
\end{align*}
Relying on the chain rule in form of $W'(u_\eps) (B\cdot\nabla)u_\eps
= (B\cdot\nabla)W(u_\eps)$ and integrating by parts also entails
\begin{align*}
- \int_{0}^{T} \int_{\Omega} \big(\eps (B\cdot\nabla) u_\eps\big)  \frac{1}{\eps^2}W'(u_\eps) \,dx dt
=  \int_{0}^{T} \int_{\Omega} \frac{1}{\eps}W(u_\eps) (\nabla \cdot B)  \,dx dt.
\end{align*}
Exploiting the symmetry of $\nabla^2 u_\eps$, integrating by parts,
and using again that~$B$ is tangential along~$\partial\Omega$ by~\eqref{eq:angleCondB} 
moreover shows
\begin{align*}
- \int_{0}^{T} \int_{\Omega} \eps \nabla u_\eps \otimes B : \nabla^2 u_\eps \,dx dt
= \int_{0}^{T} \int_{\Omega} \frac{\eps}{2} |\nabla u_\eps|^2 (\nabla \cdot B) \,dx dt,
\end{align*}
from which we in turn infer by the product rule
\begin{align*}
&- \int_{0}^{T} \int_{\Omega} \nabla \big(\eps (B\cdot\nabla) u_\eps\big) \cdot \nabla u_\eps \,dx dt
\\&
= \int_{0}^{T} \int_{\Omega} \frac{\eps}{2} |\nabla u_\eps|^2 (\nabla \cdot B) \,dx dt
- \int_{0}^{T} \int_{\Omega} \eps \nabla u_\eps \otimes \nabla u_\eps : \nabla B \,dx dt.
\end{align*}
In summary, we obtain from the previous displays the following version of~\eqref{eq:aux1}:
\begin{align}
\nonumber
&\int_{0}^{T} \int_{\Omega} \bigg(\Big(\frac{\eps}{2} |\nabla u_\eps|^2 + \frac{1}{\eps}W(u_\eps)\Big) I_d
- \eps \nabla u_\eps \otimes \nabla u_\eps\bigg) : \nabla B  \,dx dt
\\& \label{eq:aux2}
+ \int_{0}^{T} \int_{\p\Omega} \sigma(u_\eps) 
(I_d {-} \nu_{\p\Omega}\otimes \nu_{\p \Omega}) : \nabla B \,d\mathcal{H}^{d-1} dt
= \int_{0}^{T} \int_{\Omega} \big(\eps (B\cdot\nabla) u_\eps\big) \p_t u_\eps  \,dx dt.
\end{align}

In order to take the limit $\eps \downarrow 0$ on the left hand side of~\eqref{eq:aux2}, 
an inspection of the associated terms immediately shows that one requires at the very 
least information of the limiting behavior of the localized energies
\begin{align}
\label{def:locEnergyDensity}
E_\eps(u;\eta) &:= \int_{\Omega} \eta\Big(\frac{\eps}{2} |\nabla u|^2 + \frac{1}{\eps}W(u)\Big) \,dx
									+ \int_{\p\Omega} \eta\sigma(u) \,d\mathcal{H}^{d-1},
&& \eta \in C(\overline{\Omega}),
\\ \label{def:locaEnergyDensityLimit}
E(A;\eta) &:= c_0 \int_{\p^*A \cap \Omega} \eta \,d\mathcal{H}^{d-1}
+ c_0\cos\alpha \int_{\p^*A \cap \partial\Omega} \eta \,d\mathcal{H}^{d-1},
&& \eta \in C(\overline{\Omega}).
\end{align}
That the limit of~\eqref{def:locEnergyDensity} is given by the expected
quantity~\eqref{def:locaEnergyDensityLimit} is the content of the following result.

\begin{lemma}
\label{lem:convergenceLocEnergy}
Let the assumptions of Theorem~\ref{theo:existence} ii) be in place; in particular,
the energy convergence assumption~\eqref{assump:energyConvergence}
with respect to the map~$\chi_A$ and the subsequence $\eps \downarrow 0$ from 
the compactness Lemma~\ref{lem:compactness}. Then
\begin{align}
\label{eq:convergenceLocEnergy}
\lim_{\eps \downarrow 0} \int_{0}^{T} E_\eps\big(u_\eps(\cdot,t);\eta(\cdot,t)\big) \,dt
= \int_{0}^{T} E\big(A(t);\eta(\cdot,t)\big) \,dt 
\end{align}
for all $\eta \in C(\overline{\Omega}\times[0,T])$.
\end{lemma}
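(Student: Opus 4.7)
The plan is a standard upgrade-the-liminf argument. View the bulk contribution $(\frac{\eps}{2}|\nabla u_\eps|^2 + \frac{1}{\eps}W(u_\eps))\,dx\otimes dt$ and the boundary contribution $\sigma(u_\eps)\,d\H^{d-1}\otimes dt$ as two nonnegative Radon measures on the compact space $\overline{\Omega}\times[0,T]$, and likewise split the limit energy measure. The strategy is to prove a \emph{separate} liminf inequality, against every nonnegative $\eta\in C(\overline\Omega\times[0,T])$, for each of the two measures, and then combine this with the hypothesis \eqref{assump:energyConvergence}---which is equality of the total masses at $\eta\equiv 1$---to conclude. Indeed, once both separate liminf inequalities are available, the elementary identity $\liminf a_\eps+\liminf b_\eps\leq\lim(a_\eps{+}b_\eps)$ applied at $\eta\equiv 1$ forces each individual liminf to be an equality there; applying the separate liminfs to $\|\eta\|_\infty-\eta$ for any nonnegative $\eta$ then yields the matching limsup bounds, which upgrades each liminf to a weak-$*$ convergence of measures. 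Splitting a general $\eta$ into positive and negative parts gives~\eqref{eq:convergenceLocEnergy}.

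The bulk liminf is the classical Modica--Mortola/Bogomol'nyi bound: Young's inequality yields the pointwise estimate $\frac{\eps}{2}|\nabla u_\eps|^2+\frac{1}{\eps}W(u_\eps)\geq|\nabla\psi_\eps|$, Lemma~\ref{lem:compactness} provides $\psi_\eps\to c_0\chi_A$ weakly-$*$ in $BV(\Omega\times(0,T))$, and Reshetnyak-type lower semicontinuity (applied to the spatial part of the spacetime BV measure, or slicewise in time) produces
\begin{align*}
\liminf_{\eps\downarrow 0}\int_0^T\!\!\int_\Omega \eta\Big(\tfrac{\eps}{2}|\nabla u_\eps|^2+\tfrac{1}{\eps}W(u_\eps)\Big)dx\,dt \;\geq\; c_0\int_0^T\!\!\int_{\p^*A(t)\cap\Omega}\eta\,d\H^{d-1}\,dt
\end{align*}
for every nonnegative $\eta\in C(\overline\Omega\times[0,T])$.

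The boundary liminf is the main obstacle, because $u_\eps$ has no uniform trace control on $\p\Omega$ and the whole argument must instead be routed through $\psi_\eps$. Assumption~\eqref{assump:Sigma2} gives $\sigma\geq(\cos\alpha)\psi$ on $[-1,1]$ which, in view of the $L^\infty$-bound~\eqref{eq:LinftyBound}, reduces matters to the behaviour of the boundary traces of $\psi_\eps$. Smoothness of $\p\Omega$ together with the uniform $BV(\Omega\times(0,T))$-bound on $\psi_\eps$ delivers, along a further subsequence, $L^1(\p\Omega\times(0,T))$-convergence of these traces to the trace of $c_0\chi_A$, which the standard BV-trace theory identifies on $\p\Omega$ with $c_0\chi_{\p^*A(t)\cap\p\Omega}$. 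Combining this with continuity of $\sigma$, the compatibility values~\eqref{assump:Sigma3}, and the relaxation identity~\eqref{eq:relaxedBoundaryDensity} (which rules out any strictly subadditive loss in the boundary envelope, so that no defect measure appears in the limit) yields
\begin{align*}
\liminf_{\eps\downarrow 0}\int_0^T\!\!\int_{\p\Omega}\eta\,\sigma(u_\eps)\,d\H^{d-1}\,dt \;\geq\; c_0\cos\alpha\int_0^T\!\!\int_{\p^*A(t)\cap\p\Omega}\eta\,d\H^{d-1}\,dt
\end{align*}
for every nonnegative $\eta$. With both liminf inequalities in hand, the argument sketched in the first paragraph completes the proof of~\eqref{eq:convergenceLocEnergy}.
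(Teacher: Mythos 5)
Your decomposition into two \emph{separate} liminf inequalities does not work, because the boundary liminf
\begin{align*}
\liminf_{\eps\downarrow 0}\int_0^T\!\!\int_{\p\Omega}\eta\,\sigma(u_\eps)\,d\H^{d-1}\,dt \;\geq\; c_0\cos\alpha\int_0^T\!\!\int_{\p^*A(t)\cap\p\Omega}\eta\,d\H^{d-1}\,dt
\end{align*}
is simply false for general recovery-type sequences. Take $A=\Omega$, so the limit energy is $E(A)=c_0\cos\alpha\,\H^{d-1}(\p\Omega)$, and let $u_\eps$ be $+1$ away from the boundary but with a thin $O(\eps)$-width optimal-profile layer dropping down to $-1$ near $\p\Omega$. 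Then $\psi_\eps\to c_0\chi_A$ in $L^1(\Omega\times(0,T))$, $\|\psi_\eps\|_{BV}$ is uniformly bounded, yet $\int_{\p\Omega}\sigma(u_\eps)\,d\H^{d-1}=\sigma(-1)\H^{d-1}(\p\Omega)=0$, which is strictly smaller than the would-be lower bound. The missing amount is taken up by the bulk term, which is exactly the point: a diffuse-interface sequence may trade boundary energy for bulk energy concentrated along $\p\Omega$, so the two contributions cannot be lower-semicontinuous separately. The relaxation identity~\eqref{eq:relaxedBoundaryDensity} does not block this; $\hat\sigma$ encodes precisely the admissibility of such bulk-boundary trading at the level of the relaxed functional, not an obstruction to it at the level of sequences.

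For the same reason, the assertion that ``the uniform $BV(\Omega\times(0,T))$-bound on $\psi_\eps$ delivers $L^1(\p\Omega\times(0,T))$-convergence of traces'' is unjustified and in fact wrong. A uniform $BV$ bound together with interior $L^1$ convergence gives only weak-$*$ compactness of the trace measures, not identification of the limit with the trace of the limit function; the boundary-layer example above is a counterexample. Trace convergence here follows only from \emph{strict} $BV(\Omega)$-convergence, which is far from automatic and is precisely the content of the separate Lemma~\ref{lem:convergenceTraces} --- and that lemma itself requires the energy convergence hypothesis together with the structural conditions~\eqref{assump:Sigma1}--\eqref{assump:Sigma3}, via an argument that is nothing like ``$BV$ bound $\Rightarrow$ trace convergence.''

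The paper's proof performs the same reduction you sketch in your opening paragraph (linearity, restrict to $\eta\in[0,1]$, use the total-energy convergence and $\eta\mapsto 1-\eta$ to reduce to a single liminf bound), but then it never splits bulk and boundary. Instead it lower-bounds the bulk by $\int\eta|\nabla\psi_\eps|$ via Young's inequality and the boundary by $\int_{\p\Omega}\eta\,(\hat\sigma\circ\psi^{-1})(\psi_\eps)$ via $\sigma\geq\hat\sigma$, and applies the \emph{combined} lower-semicontinuity result~\cite[Proposition~1.2]{Modica1987}, which treats the $BV$ seminorm in the bulk together with a $1$-Lipschitz boundary integrand simultaneously. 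That combined statement is exactly the device that encodes the possibility of bulk-boundary energy migration, and it is the ingredient your splitting strategy is missing.
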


Hence, taking $\eta=\nabla \cdot B$ as a test function in the 
convergence~\eqref{eq:convergenceLocEnergy} and recalling~\eqref{eq:Young} shows
\begin{align}
\nonumber
&\lim_{\eps \downarrow 0} \bigg(\int_{0}^{T} \int_{\Omega} \Big(\frac{\eps}{2} |\nabla u_\eps|^2 
+ \frac{1}{\eps}W(u_\eps)\Big) I_d : \nabla B  \,dx dt
+ \int_{0}^{T} \int_{\p\Omega} \sigma(u_\eps) I_d : \nabla B \,d\mathcal{H}^{d-1} dt \bigg)
\\& \label{eq:convergenceCurvature1}
= c_0 \int_{0}^{T} \int_{\p^*A \cap \Omega} I_d : \nabla B \,d\mathcal{H}^{d-1} dt
+ \jump{\hat\sigma} \int_{0}^{T} \int_{\p^*A \cap \partial\Omega} I_d : \nabla B \,d\mathcal{H}^{d-1} dt.
\end{align}
Admittedly, this is only half of the story for recovering the curvature term on the 
left hand side of the desired motion law~\eqref{eq:motionLaw}. In order to take
the limit in those left hand side terms of~\eqref{eq:aux2} which are quadratic in 
the dependence on the ``normals'', we resort to a freezing argument which in turn
is facilitated by the introduction of the following localized relative entropy type
functionals:
\begin{align}
\label{def:locRelEntropy}
\E_\eps(u;\eta,\xi) &:= E_\eps(u;\eta) - \int_{\Omega} \eta(\xi\cdot\nabla)\big(\psi(u)\big) \,dx
									  - \int_{\p\Omega} \eta(\cos\alpha)\psi(u)  \,d\mathcal{H}^{d-1},
\\ \label{def:locRelEntropyLimit}
\E(A;\eta,\xi) &:= E(A;\eta) - c_0\int_{\p^*A \cap \Omega} \eta 
\xi\cdot\frac{\nabla\chi_A}{|\nabla\chi_A|} \,d\mathcal{H}^{d-1}
- c_0\cos\alpha\int_{\p^*A \cap \p\Omega} \eta \,d\mathcal{H}^{d-1}
\end{align}
for all test functions
\begin{align}
\label{eq:testFunctionLocRelEntropy}
\eta \in C(\overline{\Omega}) \quad\text{and}\quad \xi \in C(\overline{\Omega}) \text{ s.t.\ }
\xi \cdot \nu_{\p\Omega} = \cos\alpha \text{ along } \p\Omega. 
\end{align}
The two functionals~\eqref{def:locRelEntropy} and~\eqref{def:locRelEntropyLimit}
are particularly suited for a freezing argument in the normals due to following
two observations. First, each of the two functionals can be rewritten as
a perturbation of the corresponding localized energy functionals~\eqref{def:locEnergyDensity}
and~\eqref{def:locaEnergyDensityLimit}, respectively, which is linear --- and thus well-suited
to weak convergence methods --- in the dependence of the data $\psi(u)$ and $\chi_A$, respectively. 
Indeed, we may simply compute based on an integration by parts and the 
boundary condition~\eqref{eq:angleCondXi} of the test function~$\xi$
\begin{align}
\label{eq:altRepRelEntropy}
\E_\eps(u;\eta,\xi) &= E_\eps(u;\eta) + \int_{\Omega} 
\psi(u) \big(\eta(\nabla\cdot\xi) + (\xi\cdot\nabla)\eta\big) \,dx,
\\ \label{eq:altRepRelEntropyLimit}
\E(A;\eta,\xi) &= E(A;\eta) + \int_{A} 
\big(\eta(\nabla\cdot\xi) + (\xi\cdot\nabla)\eta \big) \,dx.
\end{align}
A direct consequence of these two identities is the following result.

\begin{lemma}
\label{lem:convergenceLocRelEntropy}
Let the assumptions of Theorem~\ref{theo:existence} ii) be in place; 
in particular, the energy convergence assumption~\eqref{assump:energyConvergence}
with respect to the map~$\chi_A$ and the subsequence $\eps \downarrow 0$ from 
the compactness Lemma~\ref{lem:compactness}. Then
\begin{align}
\label{eq:convergenceLocRelEntropy}
\lim_{\eps \downarrow 0} \int_{0}^{T} \E_\eps\big(u_\eps(\cdot,t);\eta(\cdot,t),\xi(\cdot,t)\big) \,dt
= \int_{0}^{T} \E\big(A(t);\eta(\cdot,t),\xi(\cdot,t)\big) \,dt 
\end{align}
for all $\eta \in C(\overline{\Omega}\times[0,T])$ and all $\xi \in C(\overline{\Omega}\times[0,T];\R^d)$
such that $\xi\cdot\nu_{\p\Omega} = \cos\alpha$ along $\partial\Omega \times (0,T)$.
\end{lemma}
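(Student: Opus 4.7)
The plan is to leverage the alternative representations \eqref{eq:altRepRelEntropy} and \eqref{eq:altRepRelEntropyLimit}, which rewrite each of the two relative entropies as the sum of a localized energy (already handled by Lemma~\ref{lem:convergenceLocEnergy}) and a linear perturbation in $\psi(u_\eps)$, respectively $\chi_A$, tested against the coefficient $\eta(\nabla\cdot\xi) + (\xi\cdot\nabla)\eta$.

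I would first establish \eqref{eq:convergenceLocRelEntropy} for smooth test functions $\eta \in C^1(\overline{\Omega}\times[0,T])$ and $\xi \in C^1(\overline{\Omega}\times[0,T];\R^d)$ subject to the angle condition $\xi\cdot\nu_{\partial\Omega} = \cos\alpha$. In this class, the identity \eqref{eq:altRepRelEntropy} applied pointwise in $t$ and integrated in time yields
\begin{align*}
\int_0^T \E_\eps(u_\eps(\cdot,t);\eta(\cdot,t),\xi(\cdot,t))\,dt
&= \int_0^T E_\eps(u_\eps(\cdot,t);\eta(\cdot,t))\,dt \\
&\quad + \int_0^T \int_\Omega \psi_\eps \big(\eta(\nabla\cdot\xi) + (\xi\cdot\nabla)\eta\big)\,dx\,dt,
\end{align*}
together with its analogue for $\E(A;\eta,\xi)$ in which $\psi_\eps$ is replaced by $c_0 \chi_A$, via \eqref{eq:altRepRelEntropyLimit}. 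The first summand converges by Lemma~\ref{lem:convergenceLocEnergy}; the second converges because $\psi_\eps \to c_0 \chi_A$ strongly in $L^1(\Omega\times(0,T))$ by Lemma~\ref{lem:compactness}, while the multiplier is bounded and continuous on $\overline{\Omega} \times [0,T]$.

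To pass to merely continuous $(\eta,\xi)$, I would approximate in $C^0$ by smooth fields $(\eta^n,\xi^n)$ still satisfying $\xi^n\cdot\nu_{\partial\Omega} = \cos\alpha$, and control the approximation error uniformly in $\eps$ by
\begin{align*}
\|(\eta,\xi) - (\eta^n,\xi^n)\|_{C^0} \bigg(\int_0^T \int_\Omega |\nabla \psi_\eps|\,dx\,dt + \int_0^T \int_{\partial\Omega} \psi(u_\eps)\,d\mathcal{H}^{d-1}\,dt\bigg),
\end{align*}
whose bracket is uniformly bounded thanks to the Modica--Mortola/Bogomol'nyi estimate displayed just before Lemma~\ref{lem:compactness}, together with a trace estimate. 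The analogous error on the limit side is bounded by the perimeter estimate \eqref{eq:uniformBoundLimitEnergy}. The main technicality is the boundary-compatible smoothing preserving the normal-trace constraint, which I would implement by writing $\xi = (\cos\alpha)\hat n + \tau$ in a tubular neighborhood of $\partial\Omega$ with $\hat n$ a smooth extension of $\nu_{\partial\Omega}$ and $\tau$ a tangential remainder, and then mollifying only $\tau$ before gluing back; apart from this standard construction, the lemma is indeed a direct consequence of Lemmas~\ref{lem:compactness} and~\ref{lem:convergenceLocEnergy}.
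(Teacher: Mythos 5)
Your proof follows the paper's approach exactly: the paper also derives the claim as a direct consequence of Lemma~\ref{lem:convergenceLocEnergy}, the alternative representations \eqref{eq:altRepRelEntropy} and \eqref{eq:altRepRelEntropyLimit}, and the compactness statement \eqref{eq:convergence}. You additionally spell out the density argument (boundary-compatible mollification of $\xi$ plus the uniform $BV$ and trace bounds) needed to pass from $C^1$ to merely continuous test fields, a step the paper's one-line proof leaves implicit.
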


Provided one requires in addition to~\eqref{eq:testFunctionLocRelEntropy} the global length constraint
\begin{align}
\label{eq:lengthConstraintTestField}
|\xi| \leq 1 \text{ on } \overline{\Omega},
\end{align} 
the second notable property of the two functionals~\eqref{def:locRelEntropy} and~\eqref{def:locRelEntropyLimit}
is that these are nonnegative quantities providing a tilt-excess type penalization of the difference in the
``normals'' $\xi$ and $\frac{\nabla(\psi(u))}{|\nabla(\psi(u))|}$, respectively $\xi$ 
and $\frac{\nabla\chi_A}{|\nabla\chi_A|}$. At the level of the sharp interface limit,
this is easily seen by simply plugging in~\eqref{def:locaEnergyDensityLimit}
into~\eqref{def:locRelEntropyLimit} as well as exploiting~\eqref{eq:lengthConstraintTestField} 
to the effect of
\begin{align}
\label{eq:tiltExcessControlLimit}
c_0\int_{\p^*A \cap \Omega} \eta\frac{1}{2}\bigg|\frac{\nabla\chi_A}{|\nabla\chi_A|} - \xi\bigg|^2 \,d\mathcal{H}^{d-1}
\leq c_0\int_{\p^*A \cap \Omega} \eta\bigg(1 - \xi\cdot\frac{\nabla\chi_A}{|\nabla\chi_A|}\bigg) \,d\mathcal{H}^{d-1}
= \E(A;\eta,\xi).
\end{align}
At the level of the phase field approximation, let $s\in\mathbb{S}^{d-1}$
be a fixed but arbitrary unit vector. We then define a unit vector field
\begin{align}
\label{eq:phaseFieldNormal}
\nu_\eps := 
\begin{cases}
\frac{\nabla u_\eps}{|\nabla u_\eps|} &\text{if } \nabla u_\eps \neq 0,
\\
s &\text{else}.
\end{cases}
\end{align}
Note that
\begin{align}
\label{eq:compPhaseFieldNormal}
\nu_\eps|\nabla u_\eps| = \nabla u_\eps
\quad\text{and}\quad \nu_\eps|\nabla \psi_\eps| = \nabla \psi_\eps.
\end{align}
One then obtains due to another application of the
Modica--Mortola/Bogomol'nyi-trick and the
shortness condition~\eqref{eq:lengthConstraintTestField}
for any finite energy phase field~$u$ subject to $|u| \leq 1$
\begin{align}
\nonumber
0 &\leq \int_{\Omega} \eta\frac{1}{2}\bigg(\sqrt{\eps}|\nabla u| - \frac{1}{\sqrt{\eps}}\sqrt{W(u)}\bigg)^2 \,dx
+ \int_{\Omega} \eta(1 - \xi\cdot\nu_\eps) |\nabla(\psi(u))| \,dx
\\& \label{eq:tiltExcessControl1}
= \int_{\Omega} \eta\Big(\frac{\eps}{2} |\nabla u|^2 + \frac{1}{\eps}W(u)\Big) \,dx
- \int_{\Omega} \eta(\xi\cdot\nabla)\big(\psi(u)\big) \,dx;
\end{align}
hence, together with the lower bound from assumption~\eqref{assump:Sigma2}
and recalling the definitions~\eqref{def:locEnergyDensity} and~\eqref{def:locRelEntropy}
\begin{align}
\label{eq:tiltExcessControl2}
0 &\leq \int_{\p\Omega} \eta\big(\sigma(u) - (\cos\alpha)\psi(u) \big) \,d\mathcal{H}^{d-1}
\leq \E_\eps(u;\eta,\xi).
\end{align}
In particular, for any finite energy phase field~$u$ satisfying $|u| \leq 1$ 
we deduce from the condition~\eqref{eq:lengthConstraintTestField}
and the two bounds~\eqref{eq:tiltExcessControl1} and~\eqref{eq:tiltExcessControl2}
\begin{align}
\nonumber
\int_{\Omega} \eta\frac{1}{2} |\nu_\eps - \xi|^2 |\nabla(\psi(u))| \,dx
&\leq  \int_{\Omega} \eta(1 - \xi\cdot\nu_\eps) |\nabla(\psi(u))| \,dx
\\& \label{eq:tiltExcessControl3}
\leq \E_\eps(u;\eta,\xi).
\end{align}
A straightforward argument along the lines of, e.g. \cite[Lemma~4]{Fischer2020},
also shows that for all finite energy phase fields~$u$ satisfying $|u| \leq 1$ 
\begin{align}
\label{eq:tiltExcessControl4}
\int_{\Omega} \eta\frac{1}{2} |\nu_\eps - \xi|^2 \eps|\nabla u|^2 \,dx
\lesssim \E_\eps(u;\eta,\xi).
\end{align}

Note that the two bounds~\eqref{eq:tiltExcessControl1} and~\eqref{eq:tiltExcessControl2}
simply mean that~$\E_\eps(u;\eta,\xi)$ controls the local lack of equipartition of the bulk and the boundary energy,
respectively (recall again that we needed the lower bound from assumption~\eqref{assump:Sigma2}
to provide a sign for the latter). Furthermore, since on one side we may optimize in the choices
of the test functions $\eta$ and $\xi$ in order to produce arbitrarily small values for~$\E(A;\eta,\xi)$,
and since on the other side the limit relative entropy~\eqref{eq:altRepRelEntropyLimit} controls
the asymptotic behaviour of its phase field version~\eqref{eq:altRepRelEntropy} 
by Lemma~\ref{lem:convergenceLocRelEntropy}, we infer from the two coercivity
estimates~\eqref{eq:tiltExcessControlLimit} and~\eqref{eq:tiltExcessControl3} that
the functionals~\eqref{def:locRelEntropy} and~\eqref{def:locRelEntropyLimit} are indeed
suitable candidates for the control of the error introduced by freezing the ``normals''.

As it turns out, the above ingredients are unfortunately still not 
sufficient to pass to the limit in the left hand side terms of~\eqref{eq:aux2}
and to identify this limit with the left hand side of~\eqref{eq:motionLaw}.
The last missing ingredient consists of post-processing
the assumed convergence of the total energies~\eqref{assump:energyConvergence}
to individual convergence of the bulk and boundary contributions,
respectively. This is precisely the point of the proof where
we now make use of the assumed upper bound from assumption~\eqref{assump:Sigma2}.

\begin{lemma}
\label{lem:convergenceTraces}
Let the assumptions of Theorem~\ref{theo:existence} ii) be in place; 
in particular, the energy convergence assumption~\eqref{assump:energyConvergence}
with respect to the map~$\chi_A$ and the subsequence $\eps \downarrow 0$ from 
the compactness Lemma~\ref{lem:compactness}. Then,
after possibly passing to another subsequence $\eps \downarrow 0$,
it holds for a.e.\ $t\in(0,T)$,
\begin{align}
\label{eq:strictConvergence}
\psi_\eps(\cdot,t) \to \psi_0(\cdot,t) 
= \psi(2\chi_A(\cdot,t){-}1) = c_0\chi_A(\cdot,t)
\quad\text{strictly in } BV(\Omega) \text{ as } \eps \downarrow 0. 
\end{align}
\end{lemma}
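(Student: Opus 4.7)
The plan is to upgrade the integrated energy convergence~\eqref{assump:energyConvergence} to a pointwise-in-time statement, decouple the bulk and boundary contributions of~$E_\eps$ in the limit, and then use the Modica--Mortola identity together with $BV$ lower semicontinuity to conclude. Along the subsequence provided by Lemma~\ref{lem:compactness}, Fubini's theorem yields $L^1(\Omega)$-convergence $\psi_\eps(\cdot,t) \to \psi_0(\cdot,t) = c_0\chi_{A(t)}$ for a.e.\ $t\in(0,T)$ after passing to a further subsequence, and thus also $u_\eps(\cdot,t) \to 2\chi_{A(t)}{-}1$ in $L^1(\Omega)$ for the same set of times. Modica's $\Gamma$-liminf applied pointwise in~$t$ gives $\liminf_\eps E_\eps(u_\eps(\cdot,t)) \geq E(A(t))$ a.e.\ in $(0,T)$. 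Combining with~\eqref{assump:energyConvergence} and a dominated-convergence argument for the nonnegative function $\bigl(E(A(\cdot)) - E_\eps(u_\eps(\cdot,\cdot))\bigr)^+$ (which is dominated by the integrable $E(A(\cdot))$ and has vanishing pointwise $\limsup$), one concludes $E_\eps(u_\eps(\cdot,t)) \to E(A(t))$ in $L^1(0,T)$, hence pointwise a.e.\ along a further subsequence.

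The crucial step is then to decouple this total convergence into separate limits for the bulk part $\mathrm{bulk}_\eps(t) := \int_\Omega \bigl(\tfrac{\eps}{2}|\nabla u_\eps|^2 + \tfrac{1}{\eps}W(u_\eps)\bigr) dx$ and the boundary part $\mathrm{bdry}_\eps(t) := \int_{\p\Omega} \sigma(u_\eps)\, d\mathcal{H}^{d-1}$. Modica's interior $\Gamma$-liminf directly yields $\liminf_\eps \mathrm{bulk}_\eps(t) \geq c_0 \mathcal{H}^{d-1}(\p^* A(t) \cap \Omega)$ for a.e.\ $t$. For the boundary, I extend the nonnegative phase field $\psi_\eps(\cdot,t)$ by zero outside~$\Omega$, obtaining $\tilde\psi_\eps \in BV(\R^d)$ with total variation
\[
|D\tilde\psi_\eps|(\R^d) = \int_\Omega |\nabla\psi_\eps(\cdot,t)|\,dx + \int_{\p\Omega} \psi(u_\eps(\cdot,t))\, d\mathcal{H}^{d-1}.
\]
Classical $BV$ lower semicontinuity on~$\R^d$, together with $\tilde\psi_\eps \to c_0\chi_{A(t)}$ in $L^1(\R^d)$, delivers
\[
\liminf_\eps |D\tilde\psi_\eps|(\R^d) \geq c_0\mathcal{H}^{d-1}(\p^* A(t) \cap \Omega) + c_0\mathcal{H}^{d-1}(\p^* A(t) \cap \p\Omega).
\]
Combining the Modica--Mortola bound $\int_\Omega |\nabla\psi_\eps|\,dx \leq \mathrm{bulk}_\eps$ with the comparison $\int_{\p\Omega}\psi(u_\eps)\,d\mathcal{H}^{d-1} \leq \tfrac{1}{\cos\alpha}\,\mathrm{bdry}_\eps$ granted by~\eqref{assump:Sigma2} yields $\mathrm{bulk}_\eps + \tfrac{1}{\cos\alpha}\,\mathrm{bdry}_\eps \geq |D\tilde\psi_\eps|(\R^d)$. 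Taking $\liminf$ and using the pointwise convergence of the total energy from the first step, elementary algebra produces $\liminf_\eps \mathrm{bdry}_\eps(t) \geq \jump{\hat\sigma}\,\mathcal{H}^{d-1}(\p^* A(t) \cap \p\Omega)$ (the degenerate case $\alpha=\pi/2$, where $\jump{\hat\sigma}=0$, is automatic). Since these two liminfs sum to the limit of $\mathrm{bulk}_\eps + \mathrm{bdry}_\eps$, both are necessarily limits, and in particular $\mathrm{bulk}_\eps(t) \to c_0 \mathcal{H}^{d-1}(\p^* A(t) \cap \Omega)$.

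The strict $BV(\Omega)$-convergence is then immediate: the Modica--Mortola identity gives $\limsup_\eps \int_\Omega |\nabla\psi_\eps(\cdot,t)|\,dx \leq \lim_\eps \mathrm{bulk}_\eps(t) = c_0 \mathcal{H}^{d-1}(\p^* A(t) \cap \Omega) = |D\psi_0(\cdot,t)|(\Omega)$, while classical $BV$ lower semicontinuity on~$\Omega$ provides the matching $\liminf$, so $|D\psi_\eps(\cdot,t)|(\Omega) \to |D\psi_0(\cdot,t)|(\Omega)$. Combined with the $L^1(\Omega)$-convergence of $\psi_\eps(\cdot,t)$, this is precisely strict convergence in $BV(\Omega)$. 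The main obstacle is the boundary liminf in the middle paragraph: the trace operator from $BV(\Omega)$ to $L^1(\p\Omega)$ is not continuous under weak-$\ast$ convergence, so one cannot argue with traces directly on $\Omega$. Extending by zero to~$\R^d$ turns the boundary trace into a genuine jump of the extended $BV$ function, making it accessible to classical lower semicontinuity on $\R^d$, and the comparison between~$\sigma$ and~$\psi$ from~\eqref{assump:Sigma2} is what converts the resulting estimate into a lower bound on $\mathrm{bdry}_\eps$.
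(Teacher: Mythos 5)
Your proof is correct, but it takes a genuinely different route from the paper's. Both arguments share the setup: upgrade the integrated energy convergence~\eqref{assump:energyConvergence} to pointwise convergence a.e.\ in $t$ (along a further subsequence), obtain $L^1(\Omega)$-convergence $\psi_\eps(\cdot,t) \to c_0\chi_{A(t)}$ a.e.\ in $t$, and reduce the claim to showing $\int_\Omega|\nabla\psi_\eps(\cdot,t)|\,dx \to c_0\mathcal{H}^{d-1}(\p^*A(t)\cap\Omega)$. Both proofs also extend a $BV$ function by zero to $\R^d$ to convert the boundary trace into a genuine jump set and thereby access classical $BV$ lower semicontinuity. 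The divergence is in how the boundary energy is handled. The paper sets $\tau = \sigma\circ\psi^{-1}$, splits $|\nabla\psi_\eps|$ as $(1{-}\tau'(\psi_\eps))|\nabla\psi_\eps| + |\nabla(\tau\circ\psi_\eps)|$, establishes strict $BV$ convergence of the auxiliary quantity $\psi_\eps - \tau\circ\psi_\eps$, and then transfers this to $\psi_\eps$ itself via the Fleming--Rishel coarea formula together with the bijectivity of $s\mapsto s-\tau(s)$; this step crucially invokes the \emph{upper} bound $\sigma \leq (1{-}\kappa)\psi$ from~\eqref{assump:Sigma2} (to ensure $\tau'<1$) and the monotonicity of~$\sigma$ from~\eqref{assump:Sigma1}. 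You instead extend $\psi_\eps$ itself, note $|D\tilde\psi_\eps|(\R^d) \leq \mathrm{bulk}_\eps + \tfrac{1}{\cos\alpha}\mathrm{bdry}_\eps$, and combine this with the Modica interior $\liminf$, the positivity of~$\mathrm{bdry}_\eps$, and the pointwise total energy convergence in a sandwich to decouple $\mathrm{bulk}_\eps$ and $\mathrm{bdry}_\eps$; your step uses the \emph{lower} bound $(\cos\alpha)\psi \leq \sigma$ from~\eqref{assump:Sigma2} instead. Your approach is somewhat more elementary — no coarea formula is needed, and the strict $BV$ convergence of $\psi_\eps$ falls out directly from the Modica--Mortola bound plus lower semicontinuity on~$\Omega$. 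The paper's substitution argument, by contrast, handles the wetted/non-wetted splitting of $\p^*A(t)\cap\p\Omega$ somewhat more systematically through the function~$\tau$ and would adapt more readily to boundary energy densities that are not sandwiched between multiples of~$\psi$ but merely satisfy $0 < \tau' < 1$.
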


With all of the above ingredients in place, we are now
ready to establish the desired convergence of the quadratic terms.

\begin{proposition}
\label{prop:convergenceCurvatureTerm}
Let the assumptions of Theorem~\ref{theo:existence} ii) be in place; 
in particular, the energy convergence assumption~\eqref{assump:energyConvergence}
with respect to the map~$\chi_A$ and the subsequence $\eps \downarrow 0$ from 
the compactness Lemma~\ref{lem:compactness}. Then
\begin{align}
\nonumber
&\lim_{\eps \downarrow 0} \bigg(\int_{0}^{T} \int_{\Omega} 
\eps \nabla u_\eps \otimes \nabla u_\eps : \nabla B  \,dx dt
+ \int_{0}^{T} \int_{\p\Omega} \sigma(u_\eps) 
\nu_{\p\Omega} \otimes \nu_{\p\Omega} : \nabla B \,d\mathcal{H}^{d-1} dt \bigg)
\\&\label{eq:convergenceCurvature2}
= c_0 \int_{0}^{T} \int_{\p^*A(t) \cap \Omega} \nu_{A(t)} \otimes \nu_{A(t)} : \nabla B \,d\mathcal{H}^{d-1} dt
+ \jump{\hat\sigma} \int_{0}^{T} \int_{\p^*A(t) \cap \partial\Omega} 
\nu_{\p\Omega} \otimes \nu_{\p\Omega} : \nabla B \,d\mathcal{H}^{d-1} dt.
\end{align}
\end{proposition}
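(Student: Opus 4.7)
\emph{Proof plan.}
The strategy is to ``freeze the normals'' in the phase-field quadratic form $\eps\nabla u_\eps\otimes\nabla u_\eps$ by a smooth test vector field $\xi_\delta$, and to control the freezing error via the tilt-excess estimates offered by the localized relative entropies $\E_\eps$ in combination with the convergence result of Lemma~\ref{lem:convergenceLocRelEntropy}. The boundary term, by contrast, is handled directly by the strict $BV$ convergence of traces from Lemma~\ref{lem:convergenceTraces}.

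\emph{Step 1 (boundary term).} Since $\sigma\circ\psi^{-1}\colon[0,c_0]\to[0,\jump{\hat\sigma}]$ is continuous with $\sigma\circ\psi^{-1}(0)=0$ and $\sigma\circ\psi^{-1}(c_0)=\jump{\hat\sigma}$ by~\eqref{assump:Sigma3}, Lemma~\ref{lem:convergenceTraces} yields for a.e.\ $t\in(0,T)$ the trace convergence $\sigma(u_\eps(\cdot,t))\to\jump{\hat\sigma}\chi_{A(t)}|_{\p\Omega}$ in $L^1(\p\Omega)$. Integration against $\nu_{\p\Omega}\otimes\nu_{\p\Omega}:\nabla B$ together with dominated convergence in $t$ (justified by the uniform trace bound $\int_{\p\Omega}\sigma(u_\eps(\cdot,t))d\mathcal{H}^{d-1}\leq E_\eps(u_{\eps,0})$) and the identification $\chi_{A(t)}|_{\p\Omega}\mathcal{H}^{d-1}\llcorner\p\Omega=\mathcal{H}^{d-1}\llcorner(\p^*A(t)\cap\p\Omega)$ recovers the boundary contribution in~\eqref{eq:convergenceCurvature2}.

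\emph{Step 2 (freezing and equipartition for the interior term).} With $\nu_\eps$ the phase-field normal~\eqref{eq:phaseFieldNormal}, I would write $\eps\nabla u_\eps\otimes\nabla u_\eps:\nabla B=\eps|\nabla u_\eps|^2(\nu_\eps\otimes\nu_\eps:\nabla B)$, pick a smooth $\xi_\delta$ on $\overline\Omega\times[0,T]$ with $|\xi_\delta|\leq 1$, $\xi_\delta\cdot\nu_{\p\Omega}=\cos\alpha$ on $\p\Omega$, and $\int_0^T\E(A(t);1,\xi_\delta(\cdot,t))dt<\delta$, and split as
\begin{align*}
\eps|\nabla u_\eps|^2(\xi_\delta\otimes\xi_\delta:\nabla B) + \eps|\nabla u_\eps|^2(\nu_\eps\otimes\nu_\eps-\xi_\delta\otimes\xi_\delta):\nabla B.
\end{align*}
Using $|\nu_\eps\otimes\nu_\eps-\xi_\delta\otimes\xi_\delta|\leq 2|\nu_\eps-\xi_\delta|$, Cauchy--Schwarz, the tilt-excess bound~\eqref{eq:tiltExcessControl4}, and Lemma~\ref{lem:convergenceLocRelEntropy}, the replacement error has $\limsup_\eps\lesssim\|\nabla B\|_\infty\sqrt{T\sup_\eps E_\eps(u_{\eps,0})}\sqrt{\delta}$. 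For the frozen term, I would exploit the Modica--Mortola/Bogomol'nyi identity
\begin{align*}
\tfrac{\eps}{2}|\nabla u_\eps|^2+\tfrac{1}{\eps}W(u_\eps)-|\nabla\psi_\eps| = \tfrac{1}{2}\big(\sqrt\eps|\nabla u_\eps|-\sqrt{2W(u_\eps)/\eps}\big)^2 \geq 0,
\end{align*}
together with Lemma~\ref{lem:convergenceLocEnergy} (with the boundary part stripped off via Step~1) and $BV$-lower semi-continuity of $\int_\Omega|\nabla_x\psi_\eps|dx$ slicewise in $t$, to conclude the $L^1$ equipartition $\int_0^T\int_\Omega(\sqrt\eps|\nabla u_\eps|-\sqrt{2W(u_\eps)/\eps})^2dx dt\to 0$. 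Factoring $\eps|\nabla u_\eps|^2-(\tfrac{\eps}{2}|\nabla u_\eps|^2+\tfrac{1}{\eps}W(u_\eps))=\tfrac{1}{2}(\sqrt\eps|\nabla u_\eps|-\sqrt{2W/\eps})(\sqrt\eps|\nabla u_\eps|+\sqrt{2W/\eps})$ and one more Cauchy--Schwarz replace $\eps|\nabla u_\eps|^2$ by the full energy density modulo an error vanishing in $L^1(\Omega\times(0,T))$; Lemma~\ref{lem:convergenceLocEnergy} applied with $\eta=\xi_\delta\otimes\xi_\delta:\nabla B$ (combined again with Step~1) then identifies the frozen term's limit as $c_0\int_0^T\int_{\p^*A(t)\cap\Omega}\xi_\delta\otimes\xi_\delta:\nabla B\,d\mathcal{H}^{d-1}dt$.

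\emph{Step 3 ($\delta\downarrow 0$) and the main obstacle.} The limit coercivity~\eqref{eq:tiltExcessControlLimit} gives $\xi_\delta\to\nu_{A(t)}$ in $L^2(\mathcal{H}^{d-1}\llcorner(\p^*A(t)\cap\Omega))dt$, whence $\xi_\delta\otimes\xi_\delta\to\nu_{A(t)}\otimes\nu_{A(t)}$ in $L^1$ with respect to the same measure, passing the frozen limit to the desired interior contribution in~\eqref{eq:convergenceCurvature2}. The main delicate ingredient is the construction of the approximating vector fields $\xi_\delta$: they must simultaneously be length-bounded, obey the global boundary condition $\xi_\delta\cdot\nu_{\p\Omega}=\cos\alpha$ on $\p\Omega\times[0,T]$, and well-approximate the measure-theoretic normal of $\p^*A(t)\cap\Omega$ in the tilt-excess sense. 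Near the contact set $\overline{\p^*A(t)\cap\Omega}\cap\p\Omega$, this reconciliation of the rectifiable-tangent structure of the interface with the fixed Young angle $\alpha$ is the only nontrivial point and relies on density properties of rectifiable sets together with a careful extension argument away from the contact set.
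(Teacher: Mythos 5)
Your strategy is correct but takes a genuinely different route from the paper at the level of the decomposition. The paper freezes only \emph{one} factor of $\nu_\eps$: it replaces $\eps\nabla u_\eps\otimes\nabla u_\eps = \nu_\eps\otimes\nu_\eps\,\eps|\nabla u_\eps|^2$ by $\xi\otimes\nu_\eps|\nabla\psi_\eps| = \xi\otimes\nabla\psi_\eps$, which stays linear in $\nabla\psi_\eps$ and is therefore amenable to an integration by parts. This integration by parts produces trace terms on $\p\Omega$ which combine naturally with the boundary quadratic term $\sigma(u_\eps)\nu_{\p\Omega}\otimes\nu_{\p\Omega}$ via the boundary condition $\xi\cdot\nu_{\p\Omega}=\cos\alpha$, so the bulk and boundary contributions are passed to the limit \emph{together}. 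You instead decouple them: the boundary term is disposed of via the strict trace convergence from Lemma~\ref{lem:convergenceTraces}, while for the bulk term you freeze \emph{both} factors ($\nu_\eps\otimes\nu_\eps\to\xi_\delta\otimes\xi_\delta$), which forces you to additionally invoke an $L^1$ equipartition statement to convert $\eps|\nabla u_\eps|^2$ into the full energy density before applying Lemma~\ref{lem:convergenceLocEnergy}. Both approaches hinge on the same intermediate results (Lemmas~\ref{lem:convergenceLocEnergy}, \ref{lem:convergenceLocRelEntropy}, \ref{lem:convergenceTraces} and the coercivity estimates \eqref{eq:tiltExcessControlLimit}--\eqref{eq:tiltExcessControl4}); the paper's single-freeze avoids the equipartition step but requires the algebraic integration-by-parts identity and close bookkeeping of boundary traces, whereas yours avoids that identity but needs equipartition (which, to be fair, is essentially implicit in Lemma~\ref{lem:convergenceTraces} anyway). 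One small thing to make explicit: when you ``strip off'' the boundary part from Lemma~\ref{lem:convergenceLocEnergy}, you need your Step~1 trace-convergence argument to hold for an \emph{arbitrary} continuous test function $\eta$, not only for $\eta=\nu_{\p\Omega}\otimes\nu_{\p\Omega}:\nabla B$; it does, since the argument is linear in $\eta$, but say so.

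The genuinely delicate and as-yet-unproved part is, as you yourself flag, the construction of the vector fields $\xi_\delta$: they must be globally defined, satisfy $|\xi_\delta|\leq 1$ and the Young-angle boundary condition $\xi_\delta\cdot\nu_{\p\Omega}=\cos\alpha$, and simultaneously achieve $\int_0^T\E(A(t);1,\xi_\delta(\cdot,t))\,dt<\delta$. The paper avoids a fully \emph{global} such $\xi$ by localizing via a partition of unity (which is why its estimate~\eqref{eq:aux3} carries a general cutoff $\eta$) and constructing the approximating field patch-by-patch along the lines of~\cite[Proof of Theorem~1.2]{LauxSimon18}; for patches touching $\p\Omega$ the constraint $\xi\cdot\nu_{\p\Omega}=\cos\alpha$ still has to be enforced, but locally this is a mild extension problem. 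Your global version is strictly harder (the constraint must be reconciled with the essentially arbitrary measure-theoretic normal of $\p^*A(t)\cap\Omega$ near the contact line, globally in space and time), and ``density properties of rectifiable sets together with a careful extension argument'' is too vague to count as a proof. If you keep the global $\xi_\delta$ route, you should at minimum reduce to the localized setting before constructing it, which amounts to recovering the structure the paper already uses.
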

In summary, we obtain the left hand side of the motion law~\eqref{eq:motionLaw}
from the left hand side of its phase field approximation~\eqref{eq:aux2}
by means of the convergences~\eqref{eq:convergenceCurvature1} and~\eqref{eq:convergenceCurvature2}
for all test vector fields $B \in C_c^1(\overline{\Omega} \times [0,T);\R^d)$ with $B \cdot \nu_{\p \Omega} =0$ 
on $\p\Omega \times (0,T)$.

\subsubsection{Existence of a square-integrable normal velocity}
The next step of the proof of Theorem~\ref{theo:existence}
is concerned with the construction of a normal velocity for the
one-parameter family of interfaces $\p^*A(t) \cap \Omega$, $t \in (0,T)$.
This is done by showing that the measure~$\partial_t\chi_A$ is
absolutely continuous with respect to the product measure
$\mathcal{L}^1 \llcorner (0,T) \otimes (\mathcal{H}^{d-1} \llcorner (\p^*A(t) \cap \Omega))_{t\in(0,T)}$.
Once this is established, the desired normal velocity is then simply encoded in 
terms of the associated Radon--Nikod\'{y}m derivative. 

\begin{lemma}
\label{lem:constructionNormalVel}
Let the assumptions of Theorem~\ref{theo:existence} ii) be in place; 
in particular, the energy convergence assumption~\eqref{assump:energyConvergence}
with respect to the map~$\chi_A$ and the subsequence $\eps \downarrow 0$ from 
the compactness Lemma~\ref{lem:compactness}. Then, in the sense of finite Radon measures on 
$\Omega \times (0,T)$,
\begin{align}
\label{eq:absContinuityTimeDeriv}
\partial_t\chi_A \ll \mathcal{L}^1 \llcorner (0,T) \otimes 
(\mathcal{H}^{d-1} \llcorner (\p^*A(t) \cap \Omega))_{t\in(0,T)}.
\end{align}
Denoting the associated Radon--Nikod\'{y}m derivative by $V\colon\Omega\times (0,T)\to\R$,
it holds
\begin{align}
\label{eq:L2boundNormalSpeedOptimal}
c_0\int_{0}^{T'} \int_{\p^*A(t) \cap \Omega} V^2 \,d\H^{d-1} dt
\leq \liminf_{\eps \downarrow 0} \int_{0}^{T'} \int_{\Omega}
\eps|\partial_t u_\eps|^2 \,dx dt
\end{align}
for almost every $T' \in (0,T)$. Moreover, the evolution equation~\eqref{eq:evolSet}
for the one-parameter family of phases $A(t) \subset \Omega$, $t \in [0,T]$,
is satisfied.
\end{lemma}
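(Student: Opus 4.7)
The plan is to pass to the limit $\eps\downarrow 0$ in a Cauchy--Schwarz bound of the form $|\int\zeta\,\partial_t\psi_\eps\,dx\,dt| \leq \bigl(\int \zeta^2 \cdot 2W(u_\eps)/\eps\,dx\,dt\bigr)^{1/2} \bigl(\int \eps|\partial_t u_\eps|^2\,dx\,dt\bigr)^{1/2}$, which is immediate from the chain rule identity $\partial_t\psi_\eps = \sqrt{2W(u_\eps)}\,\partial_t u_\eps$. This is the classical route to the interior Brakke-type estimate for Allen--Cahn.

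First I would fix a test function $\zeta\in C_c^\infty(\Omega\times(0,T))$ compactly supported away from $\partial\Omega$. On the left of the Cauchy--Schwarz estimate, the $L^1$-convergence $\psi_\eps\to c_0\chi_A$ from Lemma~\ref{lem:compactness} and integration by parts in time produce $c_0\int\zeta\,d(\partial_t\chi_A)$ in the limit. For the first factor on the right, I would apply Lemma~\ref{lem:convergenceLocEnergy} with the nonnegative weight $\zeta^2$ (the boundary contribution vanishes by the support condition on $\zeta$) to obtain convergence of the total bulk density $\zeta^2\bigl(\frac{\eps}{2}|\nabla u_\eps|^2 + W(u_\eps)/\eps\bigr)$ to $c_0\int\zeta^2\,d\mathcal{H}^{d-1}\,dt$; the Modica--Mortola chain $\frac{\eps}{2}|\nabla u_\eps|^2 + W(u_\eps)/\eps \geq 2\sqrt{W(u_\eps)/\eps \cdot \frac{\eps}{2}|\nabla u_\eps|^2} \geq |\nabla\psi_\eps|$ combined with the lower semicontinuity of $|\nabla\psi_\eps|$ against $c_0|\nabla\chi_A|$ then forces equipartition, upgrading the above convergence to $\int \zeta^2 \cdot 2W(u_\eps)/\eps \to c_0\int\zeta^2\,d\mathcal{H}^{d-1}\,dt$. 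Passing to the limit in Cauchy--Schwarz yields
\[
c_0^2 \Bigl|\int \zeta\,d(\partial_t\chi_A)\Bigr|^2 \leq c_0\int_0^T\!\!\int_{\partial^*A(t)\cap\Omega}\zeta^2\,d\mathcal{H}^{d-1}\,dt \cdot \liminf_{\eps\downarrow 0} \int_0^T\!\!\int_\Omega \eps|\partial_t u_\eps|^2\,dx\,dt,
\]
which already encodes the absolute continuity~\eqref{eq:absContinuityTimeDeriv}. A standard duality argument, approximating the Radon--Nikod\'{y}m density $V$ in $L^2$ by smooth $\zeta$, converts this into~\eqref{eq:L2boundNormalSpeedOptimal}; the localization to a generic $T'\in(0,T)$ is obtained by taking $\zeta$ supported in $\Omega\times(0,T')$ and mollifying $\chi_{(0,T')}$ in time, which is legitimate for almost every $T'$.

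With $V$ identified as the Radon--Nikod\'{y}m derivative, the evolution equation~\eqref{eq:evolSet} reduces to integrating by parts in time in the distributional identity $\partial_t\chi_A = V\,\mathcal{L}^1\otimes\bigl(\mathcal{H}^{d-1}\llcorner(\partial^*A(t)\cap\Omega)\bigr)_{t}$ against a test function $\zeta\in C_c^\infty(\overline\Omega\times[0,T))$. The endpoint traces $\chi_{A(0)}$ and $\chi_{A(T')}$ are well-defined thanks to the continuity $\chi_A\in C([0,T];L^1(\Omega))$ from Lemma~\ref{lem:compactness}, and allowing $\zeta$ to be nonzero on $\partial\Omega$ poses no difficulty because $\partial_t\chi_A$ carries no mass on $\partial\Omega\times(0,T)$ (since each $A(t)\subset\Omega$).

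The main obstacle I anticipate is the equipartition upgrade, namely extracting the individual convergence of $W(u_\eps)/\eps$ from the total-energy convergence supplied by Lemma~\ref{lem:convergenceLocEnergy}. Because $\zeta$ is supported away from $\partial\Omega$ the boundary energy $\sigma(u_\eps)$ never enters, so this reduces to the classical Modica--Mortola argument in a purely interior setting; apart from this routine check, everything else is bookkeeping.
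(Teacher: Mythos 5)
Your proposal is correct and follows essentially the same strategy as the paper's proof: a Cauchy--Schwarz estimate built on the chain-rule identity $\partial_t\psi_\eps=\sqrt{2W(u_\eps)}\,\partial_t u_\eps$, upgraded to the optimal constant via equipartition of the Dirichlet and potential energies (obtained by combining the localized energy convergence from Lemma~\ref{lem:convergenceLocEnergy} with the Modica--Mortola/AM--GM inequality and lower semicontinuity of the total variation), and then a duality argument together with a time-cutoff to obtain~\eqref{eq:L2boundNormalSpeedOptimal}. The only differences are presentational: the paper first records a suboptimal estimate (off by a factor $\sqrt{2}$) to establish~\eqref{eq:absContinuityTimeDeriv} before the equipartition upgrade and cites~\cite[Lemma~2.11]{LauxSimon18} for the latter, whereas you go directly to the sharp version and spell out the AM--GM/lower-semicontinuity argument, noting correctly that all boundary contributions drop out for $\zeta\in C_c^\infty(\Omega\times(0,T))$.
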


\subsubsection{Step~2 in the derivation of the motion law~\eqref{eq:motionLaw}: the velocity term}
The final step in the proof of Theorem~\ref{theo:existence} 
consists of the limit passage in the right hand side term 
of the approximate motion law~\eqref{eq:aux2} and the identification
of the limit with the right hand side of~\eqref{eq:motionLaw}.
The necessary ingredients for this task are already provided by the previous
two paragraphs.

\begin{proposition}
\label{prop:convergenceVelocityTerm}
Let the assumptions of Theorem~\ref{theo:existence} ii) be in place; 
in particular, the energy convergence assumption~\eqref{assump:energyConvergence}
with respect to the map~$\chi_A$ and the subsequence $\eps \downarrow 0$ from 
the compactness Lemma~\ref{lem:compactness}. Then
\begin{align}
\label{eq:convergenceVelocityTerm}
\lim_{\eps \downarrow 0} \bigg(\int_{0}^{T} \int_{\Omega} 
\big(\eps (B\cdot\nabla) u_\eps\big) \p_t u_\eps  \,dx dt \bigg)
= \int_0^{T} \int_{\p^*A(t) \cap \Omega} B\cdot \nu_{A(t)} V \,d\H^{d-1} dt.
\end{align}
\end{proposition}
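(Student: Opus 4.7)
The plan is to factor the integrand using the phase-field normal $\nu_\eps$, freeze it against a smooth test field $\xi$ of the type appearing in~\eqref{def:locRelEntropy}, then use the near-equipartition $\sqrt{\eps}|\nabla u_\eps| \approx \sqrt{2W(u_\eps)/\eps}$ to recognise the main term as $\int(B\cdot\xi)\p_t\psi_\eps$, integrate by parts in time and appeal to~\eqref{eq:evolSet} to identify the limit, and finally relax $\xi$ to $\nu_{A(t)}$ by a diagonal argument.

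Starting from $\nabla u_\eps = \nu_\eps|\nabla u_\eps|$, I rewrite the left-hand side of~\eqref{eq:convergenceVelocityTerm} as
\begin{align*}
\int_0^T \int_\Omega (B\cdot\nu_\eps)\big(\sqrt{\eps}|\nabla u_\eps|\big)\big(\sqrt{\eps}\,\p_t u_\eps\big)\,dx\,dt,
\end{align*}
and fix an auxiliary $\xi \in C^1(\overline{\Omega}{\times}[0,T];\R^d)$ with $|\xi|\leq 1$ and $\xi\cdot\nu_{\p\Omega}=\cos\alpha$ on $\p\Omega\times(0,T)$. Cauchy--Schwarz together with the tilt-excess bound~\eqref{eq:tiltExcessControl4} (with $\eta\equiv 1$) and the dissipation identity~\eqref{eq:dissipIdentity} bounds the error from replacing $\nu_\eps$ by $\xi$ by $\|B\|_\infty\bigl(\int_0^T \E_\eps(u_\eps;1,\xi)\,dt\bigr)^{1/2} E_\eps(u_{\eps,0})^{1/2}$, whose limit as $\eps\downarrow 0$ is controlled by a multiple of $\bigl(\int_0^T\E(A;1,\xi)\,dt\bigr)^{1/2}$ by Lemma~\ref{lem:convergenceLocRelEntropy}. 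A second Cauchy--Schwarz together with the algebraic identity $\tfrac{1}{2}\bigl(\sqrt{\eps}|\nabla u_\eps|-\sqrt{2W(u_\eps)/\eps}\bigr)^2 = \tfrac{\eps}{2}|\nabla u_\eps|^2 + \tfrac{1}{\eps}W(u_\eps) - |\nabla\psi_\eps|$ bounds the error from replacing $\sqrt{\eps}|\nabla u_\eps|$ by $\sqrt{2W(u_\eps)/\eps}$ by a constant times $\bigl(\int_0^T E_\eps^{\mathrm{bulk}}(u_\eps)\,dt - \int_0^T|\nabla\psi_\eps|(\Omega)\,dt\bigr)^{1/2}$, which vanishes because both integrals converge to $c_0\int_0^T\H^{d-1}(\p^*A(t){\cap}\Omega)\,dt$: the first since the total energy convergence~\eqref{assump:energyConvergence} splits into bulk and boundary parts by matching $\liminf$ and $\limsup$ (Modica--Mortola and trace lower semicontinuity), and the second by integrating the strict $BV$-convergence of Lemma~\ref{lem:convergenceTraces} in time. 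Since $\sqrt{2W(u_\eps)/\eps}\sqrt{\eps}\,\p_t u_\eps = \p_t\psi_\eps$, the main term reduces to $\int_0^T\int_\Omega (B\cdot\xi)\p_t\psi_\eps\,dx\,dt$.

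Since $\psi_\eps \in H^1(0,T;L^2(\Omega))$ and $B(\cdot,T)\equiv 0$, I integrate by parts in time and use the $L^1$-convergences $\psi_{\eps,0}\to c_0\chi_{A(0)}$ from~\eqref{eq:convergenceInitialCondition} and $\psi_\eps\to c_0\chi_A$ from~\eqref{eq:convergenceTheorem} to obtain
\begin{align*}
\lim_{\eps\downarrow 0}\int_0^T\int_\Omega (B\cdot\xi)\p_t\psi_\eps\,dx\,dt = -c_0\int_\Omega (B\cdot\xi)(\cdot,0)\chi_{A(0)}\,dx - c_0\int_0^T\int_\Omega \p_t(B\cdot\xi)\chi_A\,dx\,dt.
\end{align*}
Applying the weak evolution equation~\eqref{eq:evolSet} to the $C^1$ test function $\zeta := B\cdot\xi$ (compactly supported in~$[0,T)$) identifies this right-hand side with $c_0\int_0^T\int_{\p^*A(t)\cap\Omega}(B\cdot\xi) V\,d\H^{d-1}\,dt$. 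Picking now a sequence of admissible test fields $\xi_k$ with $\int_0^T\E(A;1,\xi_k)\,dt \to 0$ as $k\to\infty$, the inequality~\eqref{eq:tiltExcessControlLimit} forces $\xi_k \to \nu_{A(t)}$ in $L^2$ with respect to $\H^{d-1}\llcorner(\p^*A(t){\cap}\Omega)\otimes dt$, and a last Cauchy--Schwarz using the square-integrability of~$V$ enables the relaxation $\xi_k\to\nu_{A(t)}$ on the right. A diagonal argument in $(\eps,k)$ then yields~\eqref{eq:convergenceVelocityTerm}.

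The principal technical obstacle is the equipartition step: neither the tilt-excess estimate nor the global energy convergence~\eqref{assump:energyConvergence} alone forces $\sqrt{\eps}|\nabla u_\eps| - \sqrt{2W(u_\eps)/\eps} \to 0$ in $L^2(\Omega{\times}(0,T))$; the strict $BV$-convergence of Lemma~\ref{lem:convergenceTraces} (which itself rests on the upper bound in~\eqref{assump:Sigma2}) is genuinely needed to decouple the bulk and boundary contributions to the energy and to conclude that the Modica--Mortola defect integrates to zero. Once this decoupling is available, the remaining steps are a standard adaptation of the Luckhaus--Sturzenhecker approach to the boundary-contact setting.
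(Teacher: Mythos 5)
Your proposal is correct and follows the same overall decomposition as the paper: rewrite the integrand via the phase-field normal $\nu_\eps$, add and subtract the auxiliary calibration $\xi$ and the equipartition defect to reduce the main contribution to $\int_0^T\int_\Omega(B\cdot\xi)\p_t\psi_\eps\,dx\,dt$, pass to the limit by integration by parts in time and the evolution equation~\eqref{eq:evolSet}, and finally relax $\xi$ to $\nu_{A(t)}$ using~\eqref{eq:tiltExcessControlLimit} and the $L^2$-bound on $V$.

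The one genuine point of divergence is the treatment of the equipartition error $\int\eta(B\cdot\xi)\big(\sqrt{\eps}|\nabla u_\eps|-\sqrt{2W(u_\eps)/\eps}\big)\sqrt{\eps}\,\p_t u_\eps$. The paper controls it by the tilt-excess functional $\E_\eps(u_\eps;|\eta|,\xi,T)$ (via~\eqref{eq:tiltExcessControl1}, then Lemma~\ref{lem:convergenceLocRelEntropy}), so this error gets absorbed into the generic $\frac{1}{\delta}\E(A;|\eta|,\xi,T)+\delta E(A(0))$ bound together with the $\nu_\eps\leftrightarrow\xi$ freezing error, to be killed later by the localization/optimization over $\xi$. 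You instead note that the integrated Modica--Mortola defect $\int_0^T\bigl(\int_\Omega(\tfrac{\eps}{2}|\nabla u_\eps|^2{+}\tfrac1\eps W(u_\eps))-\int_\Omega|\nabla\psi_\eps|\bigr)\,dt$ already vanishes outright, because the strict $BV$ convergence of Lemma~\ref{lem:convergenceTraces} (and the trace convergence it entails) decouples bulk and boundary energies so both limits equal $c_0\int_0^T\mathcal H^{d-1}(\p^*A(t)\cap\Omega)\,dt$. That decoupling is indeed available in the paper (it is used in the proof of Lemma~\ref{lem:constructionNormalVel}), so your route is valid; it buys a slightly cleaner argument by making one error term exactly zero rather than small, at the cost of invoking Lemma~\ref{lem:convergenceTraces} for this proposition. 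One small caveat: your parenthetical "Modica--Mortola and trace lower semicontinuity" is a bit misleading as justification for the decoupling, since bare trace lower semicontinuity does not follow from $L^1$ convergence; it is the strict $BV$ convergence that supplies the trace limit, which your final paragraph correctly identifies.
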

	
\subsection{Proofs for intermediate results} In this subsection, we provide
the proofs for the various intermediate results collected in the previous subsection.

\begin{proof}[Proof of Lemma~\ref{lem:convergenceLocEnergy}]
We remark that the following proof works without requiring 
the identity~\eqref{eq:relaxedBoundaryDensity}. In fact, we
will only use the inequality $\sigma \geq \hat \sigma$
which in turn follows immediately from the definition~\eqref{def:relaxedBoundaryDensity},
and the fact that $\hat\sigma\circ\psi^{-1}$ is $1$-Lipschitz,
see~\eqref{eq:LipschitzEnvelope}.

By linearity in $\eta$, it is enough to prove the statement for $\eta\in [0,1]$. 
Since by assumption the total energy (with $\eta=1$) converges,
upon replacing~$\eta$ by~$1{-}\eta$ it is sufficient to prove the lower bound
		\begin{align}
			\notag&\liminf_{\eps\downarrow0}\int_0^T\int_\Omega 
			\eta \left(\frac{\eps}{2} |\nabla u_\eps |^2 + \frac1\eps W(u_\eps) \right)\,dx dt 
			+ \int_0^T\int_{\p\Omega} \eta\, \sigma(u_\eps) \,d\H^{d-1} dt
			\\&\geq  c_0 \int_0^T \int_{\p^*A(t)\cap \Omega} \eta \,d\H^{d-1} dt
			+ \jump{\hat \sigma} \int_0^T \int_{\p^*A(t) \cap \p\Omega} \eta \, d\H^{d-1} dt. 
			\label{eq:energy lower bound}
		\end{align}
		To prove \eqref{eq:energy lower bound}, we start with Young's inequality 
		and the trivial inequality $\sigma \geq \hat \sigma$ to estimate from below
		\begin{align*}
			&\int_0^T\int_\Omega \eta \left(\frac{\eps}{2} |\nabla u_\eps |^2 + \frac1\eps W(u_\eps) \right)\,dx dt 
			+ \int_0^T\int_{\p\Omega} \eta\, \sigma(u_\eps) \,d\H^{d-1} dt
			\\&\geq \int_0^T\int_\Omega \eta \,\sqrt{2W(u_\eps)} |\nabla u_\eps| \,dx dt 
			+ \int_0^T\int_{\p\Omega} \eta\, \hat \sigma(u_\eps) \,d\H^{d-1} dt
			\\&=\int_0^T\int_\Omega \eta |\nabla \psi_\eps| \,dx dt 
				+ \int_0^T\int_{\p\Omega} \eta\, (\hat \sigma \circ \psi^{-1})(\psi_\eps) \,d\H^{d-1} dt,
		\end{align*}
		where $\psi_\eps = \psi \circ u_\eps$. 
		Now \eqref{eq:energy lower bound} follows from (a localized version of) 
		the lower semi-continuity statement~\cite[Proposition~1.2]{Modica1987} 
		since the function $\hat\sigma\circ \psi^{-1}$ is $1$-Lipschitz. 
		(Recall that we assumed w.l.o.g.\ $\hat \sigma(-1) =0$, so $\jump{\hat \sigma}=\hat\sigma(1)$.)
\end{proof}

\begin{proof}[Proof of Lemma~\ref{lem:convergenceLocRelEntropy}]
The asserted convergence~\eqref{eq:convergenceLocRelEntropy}
of the localized relative entropies is a direct consequence of the 
convergence~\eqref{eq:convergenceLocEnergy} of the localized energies,
the representations~\eqref{eq:altRepRelEntropy} and~\eqref{eq:altRepRelEntropyLimit},
and the compactness~\eqref{eq:convergence}.
\end{proof}
	
\begin{proof}[Proof of Lemma~\ref{lem:convergenceTraces}]
First, straightforward arguments allow to post-process
the assumed convergence~\eqref{assump:energyConvergence}
of the time-integrated energies to convergence of the individual energies;
at least after passing to another subsequence $\eps \downarrow 0$:
\begin{align}
\label{eq:individualEnergyConvergence}
\lim_{\eps \downarrow0} E_\eps(u_\eps(\,\cdot\,,t)) = E(A(t))
\quad\text{for a.e.\ } t \in (0,T).
\end{align}
For instance, one may couple the simple argument given in~\cite[Proof of Lemma~2.11, Step~1]{LauxSimon18}
with the $\Gamma$-convergence result of~\cite{Modica1987}.

Of course, by passing yet again to another subsequence $\eps \downarrow 0$, one may also
guarantee as a consequence of the compactness~\eqref{eq:convergence} that
\begin{align}
\label{eq:individualCompactness}
\psi_\eps(\cdot,t) &\to c_0\chi_A(\cdot,t)
\quad\text{strongly in } L^1(\Omega) \text{ as } \eps \downarrow 0
\text{ for a.e.\ } t \in (0,T).
\end{align}
For a proof of the claim~\eqref{eq:strictConvergence}, it thus suffices to establish
convergence of the total variations
\begin{align}
\label{eq:convergenceTV}
\int_{\Omega} |\nabla\psi_\eps(\cdot,t)| \,dx \to  
c_0\mathcal{H}^{d-1}(\p^*A(t)\cap\Omega) \quad\text{as } \eps \downarrow 0
\text{ for a.e.\ } t \in (0,T).
\end{align}
Since the set of times for which~\eqref{eq:convergenceTV} holds 
solely stems from~\eqref{eq:individualEnergyConvergence}
and~\eqref{eq:individualCompactness}, let us drop for the rest of
the argument the dependence on the time variable in the notation.

Making use of~\eqref{eq:individualEnergyConvergence} and the
Modica--Mortola/Bogomol'nyi-trick, we may estimate 
\begin{align*}
E(A) = \lim_{\eps \downarrow 0} E_\eps(u_\eps)
\geq \limsup_{\eps \downarrow 0} \bigg(\int_{\Omega} |\nabla\psi_\eps| \,dx
+ \int_{\p\Omega} \tau\circ\psi_\eps \,d\mathcal{H}^{d-1}\bigg)
\end{align*}
where we again set $\tau := \sigma\circ\psi^{-1}$. Adding zero in form of
$|\nabla\psi_\eps|=(1{-}\tau'(\psi_\eps))|\nabla\psi_\eps| + \tau'(\psi_\eps)|\nabla\psi_\eps|$,
relying on the chain rule in form of $\tau'(\psi_\eps)\nabla\psi_\eps=\nabla(\tau(\psi_\eps))$,
and using~$\tau'\geq 0$, which follows immediately from the monotonicity of~$\sigma$
in assumption~\eqref{assump:Sigma1},
we then get 
\begin{align*}
E(A) &\geq \limsup_{\eps \downarrow 0} \int_{\Omega} 
|\nabla(\psi_\eps - \tau\circ\psi_\eps)| \,dx
\\&~~~
+ \liminf_{\eps \downarrow 0} \bigg(\int_{\Omega} |\nabla(\tau\circ\psi_\eps)| \,dx
+ \int_{\p\Omega} \tau\circ\psi_\eps \,d\mathcal{H}^{d-1}\bigg).
\end{align*}
Since $\tau(0) = \sigma(\psi^{-1}(0)) = \sigma(-1) = 0$ due to~\eqref{def:psi}
and the first item of assumption~\eqref{assump:Sigma3}, we may extend $\psi_\eps$ by zero from~$\Omega$ 
to~$\R^d$ and thus identify the argument of the~$\liminf$ in the previous display as the
$BV$ seminorm~$|\cdot|_{BV(\R^d)}$ of $\tau\circ\psi_\eps$ in $\R^d$. 
In particular, by~\eqref{eq:individualCompactness},
continuity of~$\tau$, and lower semicontinuity of the $BV$ seminorm, we obtain
\begin{align*}
&\liminf_{\eps \downarrow 0} \bigg(\int_{\Omega} |\nabla\tau\circ\psi_\eps| \,dx
+ \int_{\p\Omega} \tau\circ\psi_\eps \,d\mathcal{H}^{d-1}\bigg)
= \liminf_{\eps \downarrow 0} |\tau\circ\psi_\eps|_{BV(\R^d)}
\geq |\tau(c_0\chi_{A})|_{BV(\R^d)}.
\end{align*}
Moreover, $\tau(c_0\chi_{A}) = \tau(c_0)\chi_{A}$ and,
by~\eqref{def:surfaceTensionInterface} and the second item 
of assumption~\eqref{assump:Sigma3}, $\tau(c_0) = \tau(\psi(1)) = \sigma(1)
= \jump{\hat\sigma}$. Hence, 
\begin{align*}
|\tau(c_0\chi_{A})|_{BV(\R^d)} &= 
\tau(c_0) \mathcal{H}^{d-1}(\p^*A \cap \Omega) + 
\jump{\hat\sigma} \mathcal{H}^{d-1}(\p^*A \cap \p\Omega)
\\&
= (\tau(c_0) {-} c_0) \mathcal{H}^{d-1}(\p^*A \cap \Omega) + E(A).
\end{align*}
The previous three displays therefore imply
\begin{align*}
E(A) &\geq \limsup_{\eps \downarrow 0} \int_{\Omega} 
|\nabla(\psi_\eps - \tau\circ\psi_\eps)| \,dx
+ (\tau(c_0) {-} c_0) \mathcal{H}^{d-1}(\p^*A \cap \Omega) + E(A).
\end{align*}
Rearranging terms and recalling $\psi_0=\psi(2\chi_A{-}1)=c_0\chi_A$,
which implies $\psi_0-\tau\circ\psi_0=(c_0-\tau(c_0))\chi_A$,
we obtain 
\begin{align*}
\limsup_{\eps \downarrow 0} \int_{\Omega} 
|\nabla(\psi_\eps - \tau\circ\psi_\eps)| \,dx
\leq \int_{\Omega} |\nabla(\psi_0 - \tau\circ\psi_0)| \,dx,
\end{align*}
so that due to~\eqref{eq:individualCompactness} we infer that
$\psi_\eps - \tau\circ\psi_\eps \to \psi_0 - \tau\circ\psi_0$
strictly in $BV(\Omega)$ as $\eps \downarrow 0$. 
The Fleming--Rishel coarea formula then implies that a.e.\ level-set of
$\psi_\eps - \tau\circ\psi_\eps$ converges strictly
to the corresponding level-set of $\psi_0 - \tau\circ\psi_0$. 
However, since $0<\tau'<1$ in $(0,c_0)$ as a consequence of the second item of
assumption~\eqref{assump:Sigma1} and the upper bound from assumption~\eqref{assump:Sigma2},
it follows that the map $[0,c_0] \in s' \mapsto s'{-}\tau(s')$ is bijective,
so that by~\eqref{eq:LinftyBound} in form of $\psi_\eps\in [0,c_0]$ we also
have that a.e.\ level-set of $\psi_{\eps}$ converges strictly
to the corresponding level-set of $c_0\chi_{A}$. 
Hence, the Fleming--Rishel coarea formula entails the claim~\eqref{eq:convergenceTV}
and thus~\eqref{eq:strictConvergence}.
\end{proof}

\begin{proof}[Proof of Proposition~\ref{prop:convergenceCurvatureTerm}]
We claim that for all $B \in C_c([0,T);C^2(\overline{\Omega};\R^d))$ with $B \cdot \nu_{\p \Omega} =0$ 
on $\p\Omega \times (0,T)$, all $\eta \in C([0,T];C^1(\overline{\Omega}))$, 
all $\xi \in C([0,T];C^1(\overline{\Omega};\R^d))$
with $|\xi|\leq 1$ and $\xi\cdot\nu_{\p\Omega} = \cos\alpha$ along $\partial\Omega \times (0,T)$,
and all $\delta \in (0,1)$ it holds
\begin{align}
\label{eq:aux3}
&\lim_{\eps \downarrow 0} \bigg|\bigg(\int_{0}^{T} \int_{\Omega} 
\eta\eps \nabla u_\eps \otimes \nabla u_\eps : \nabla B  \,dx dt
{+} \int_{0}^{T} \int_{\p\Omega} \eta\sigma(u_\eps) 
\nu_{\p\Omega} \otimes \nu_{\p\Omega} : \nabla B \,d\mathcal{H}^{d-1} dt \bigg)
\\& \nonumber
{-} \bigg(c_0 \int_{0}^{T} \int_{\p^*A(t) \cap \Omega} \eta\nu_{A(t)} \otimes \nu_{A(t)} : \nabla B \,d\mathcal{H}^{d-1} dt
{+} \jump{\hat\sigma} \int_{0}^{T} \int_{\p^*A(t) \cap \p\Omega} 
\eta \nu_{\p\Omega} \otimes \nu_{\p\Omega} : \nabla B \,d\mathcal{H}^{d-1} dt\bigg)\bigg|
\\& \nonumber
\lesssim_{T,\|\nabla B\|_{L^\infty},\|\eta\|_{L^\infty}}
\frac{1}{\delta}\E(A;|\eta|,\xi,T) + \delta E(A(0)),
\end{align}
where we abbreviated
\begin{align}
\label{eq:timeIntegratedRelEntropyLimit}
\E(A;\eta,\xi,T) :=
\int_{0}^{T} \E\big(A(t);\eta(\cdot,t),\xi(\cdot,t)\big) \,dt.
\end{align}
A localization argument (by means of a suitable partition of unity) 
together with a subsequent local optimization of the choice of the vector fields~$\xi$
along the lines of, e.g., \cite[Proof of Theorem~1.2]{LauxSimon18} shows that
the asserted convergence~\eqref{eq:convergenceCurvature2} is implied by the 
family of estimates~\eqref{eq:aux3}.

For a proof of~\eqref{eq:aux3}, 
we start estimating
\begin{align*}
&\bigg|\int_{0}^{T} \int_{\Omega} \eta\eps \nabla u_\eps \otimes \nabla u_\eps : \nabla B \,dx dt
- \int_{0}^{T} \int_{\Omega} \eta\xi\otimes\nu_\eps:\nabla B\,|\nabla\psi_\eps|\,dx dt\bigg|
\\&
\leq  \|\nabla B\|_{L^\infty}
\int_{0}^{T} \int_{\Omega} \Big|\eta\nu_\eps \otimes \nu_\eps \,\eps|\nabla u_\eps|^2
- \eta\xi\otimes\nu_\eps\,|\nabla\psi_\eps|\Big| \,dx dt
\\&
\leq  \|\nabla B\|_{L^\infty} \int_{0}^{T} \int_{\Omega} 
|\eta|\Big|\eps|\nabla u_\eps|^2 - |\nabla\psi_\eps|\Big| \,dx dt
+  \|\nabla B\|_{L^\infty} \int_{0}^{T} \int_{\Omega} 
|\eta||\nu_\eps - \xi| |\nabla\psi_\eps| \,dx dt.
\end{align*}
By H\"{o}lder's and Young's inequality as well as the coercivity estimate~\eqref{eq:tiltExcessControl3}
\begin{align*}
\int_{0}^{T} \int_{\Omega} 
|\eta||\nu_\eps - \xi| |\nabla\psi_\eps| \,dx dt
\lesssim \delta T\|\eta\|_{L^\infty}\sup_{t\in(0,T)} \int_{\Omega} |\nabla\psi_\eps(\cdot,t)| \,dx
+ \frac{1}{\delta}\E_\eps(u_\eps;|\eta|,\xi,T),
\end{align*}
where the abbreviation~$\E_\eps(u_\eps;\eta,\xi,T)$ is defined analogously
to~\eqref{eq:timeIntegratedRelEntropyLimit}. Furthermore, since $\nabla\psi_\eps
=\sqrt{2W(u_\eps)}\nabla u_\eps$ we get
$\big|\eps|\nabla u_\eps|^2 - |\nabla\psi_\eps|\big| = \sqrt{\eps}|\nabla u_\eps|
\big(\sqrt{\eps}|\nabla u_\eps| - \frac{1}{\sqrt{\eps}}\sqrt{2W(u_\eps)}\big)$,
hence by another application of H\"{o}lder's and Young's inequality
and this time the coercivity estimate~\eqref{eq:tiltExcessControl3}
\begin{align*}
\int_{0}^{T} \int_{\Omega} 
|\eta|\Big|\eps|\nabla u_\eps|^2 - |\nabla\psi_\eps|\Big| \,dx dt
\lesssim \delta T\|\eta\|_{L^\infty}\sup_{t\in(0,T)} 
\int_{\Omega} \eps|\nabla u_\eps(\cdot,t)|^2 \,dx
+ \frac{1}{\delta}\E_\eps(u_\eps;|\eta|,\xi,T).
\end{align*}
Due to $\int_{\Omega} |\nabla\psi_\eps(\cdot,t)| \,dx \leq E_\eps(u_\eps(\cdot,t))$
and $\int_{\Omega} \eps|\nabla u_\eps(\cdot,t)|^2 \,dx \leq 2E_\eps(u_\eps(\cdot,t))$,
we may post-process the previous three displays based on~\eqref{eq:dissipIdentity},
\eqref{eq:initialEnergyConvergence} and~\eqref{eq:convergenceLocRelEntropy} to the effect of
\begin{align}
\nonumber
&\bigg|\int_{0}^{T} \int_{\Omega} \eta\eps \nabla u_\eps \otimes \nabla u_\eps : \nabla B \,dx dt
- \int_{0}^{T} \int_{\Omega} \eta\xi\otimes\nu_\eps:\nabla B\,|\nabla\psi_\eps|\,dx dt\bigg|
\\& \label{eq:aux4}
\lesssim_{T,\|\nabla B\|_{L^\infty},\|\eta\|_{L^\infty}}
\frac{1}{\delta}\E(A;|\eta|,\xi,T) + \delta E(A(0)).
\end{align} 
A similar but even simpler argument based on~\eqref{eq:uniformBoundLimitEnergy}
and~\eqref{eq:tiltExcessControlLimit} also shows
\begin{align}
\nonumber
&\bigg|\int_{0}^{T} \int_{\p^*A(t) \cap \Omega} \eta\nu_{A(t)} \otimes \nu_{A(t)} : \nabla B \,d\mathcal{H}^{d-1} dt
- \int_{0}^{T} \int_{\p^*A(t) \cap \Omega} \eta\xi \otimes \nu_{A(t)} : \nabla B \,d\mathcal{H}^{d-1} dt\bigg|
\\& \label{eq:aux5}
\lesssim_{T,\|\nabla B\|_{L^\infty},\|\eta\|_{L^\infty}}
\frac{1}{\delta}\E(A;|\eta|,\xi,T) + \delta E(A(0)).
\end{align}

Next, by the second item of~\eqref{eq:compPhaseFieldNormal}, an integration by parts,
decomposing~$\xi$ into tangential and normal components, and making use
of the boundary condition~$\xi\cdot\nu_{\p\Omega}=\cos\alpha$ we obtain
\begin{align*}
&\int_{0}^{T} \int_{\Omega} \eta\xi\otimes\nu_\eps:\nabla B\,|\nabla\psi_\eps|\,dx dt
+ \int_{0}^{T} \int_{\p\Omega} \eta\sigma(u_\eps) 
\nu_{\p\Omega} \otimes \nu_{\p\Omega} : \nabla B \,d\mathcal{H}^{d-1} dt
\\&
= \int_{0}^{T} \int_{\Omega} \eta\xi\otimes\nabla\psi_\eps:\nabla B \,dx dt
+ \int_{0}^{T} \int_{\p\Omega} \eta\sigma(u_\eps) 
\nu_{\p\Omega} \otimes \nu_{\p\Omega} : \nabla B \,d\mathcal{H}^{d-1} dt
\\&
= - \int_{0}^{T} \int_{\p\Omega} \psi_\eps\eta\xi\otimes \nu_{\p\Omega}:\nabla B \,d\mathcal{H}^{d-1} dt
+ \int_{0}^{T} \int_{\p\Omega} \eta\sigma(u_\eps) 
\nu_{\p\Omega} \otimes \nu_{\p\Omega} : \nabla B \,d\mathcal{H}^{d-1} dt
\\&~~~
- \int_{0}^{T} \int_{\Omega} \psi_\eps\nabla\cdot\big(\eta(\nabla B)^\mathsf{T}\xi\big) \,dx dt
\\&
= \int_{0}^{T} \int_{\p\Omega} \eta\big(\sigma(u_\eps) - \psi_\eps\cos\alpha\big)
\nu_{\p\Omega} \otimes \nu_{\p\Omega} : \nabla B \,d\mathcal{H}^{d-1} dt
\\&~~~
- \int_{0}^{T} \int_{\p\Omega} \eta\psi_\eps(I_d{-}\nu_{\p\Omega}\otimes\nu_{\p\Omega})
\xi\otimes \nu_{\p\Omega}:\nabla B \,d\mathcal{H}^{d-1} dt
- \int_{0}^{T} \int_{\Omega} \psi_\eps\nabla\cdot\big(\eta(\nabla B)^\mathsf{T}\xi\big) \,dx dt.
\end{align*}
Recall now from standard $BV$ theory that strict convergence in $BV(\Omega)$
implies convergence of the (well-defined) traces in $L^1(\p\Omega,\mathcal{H}^{d-1}\llcorner\p\Omega)$;
see, e.g., \cite[Theorem~3.88]{AmbrosioFuscoPallara}. Hence, by means of the 
convergences~\eqref{eq:convergence} and~\eqref{eq:strictConvergence} as
well as the second item of assumption~\eqref{assump:Sigma3} we deduce
\begin{align*}
&\lim_{\eps \downarrow 0} \bigg(\int_{0}^{T} \int_{\Omega} 
\eta\xi\otimes\nu_\eps:\nabla B\,|\nabla\psi_\eps|\,dx dt
+ \int_{0}^{T} \int_{\p\Omega} \eta\sigma(u_\eps) 
\nu_{\p\Omega} \otimes \nu_{\p\Omega} : \nabla B \,d\mathcal{H}^{d-1} dt\bigg)
\\&
= - c_0\int_{0}^{T} \int_{\p^*A(t) \cap \p\Omega} \eta(I_d{-}\nu_{\p\Omega}\otimes\nu_{\p\Omega})
\xi\otimes \nu_{\p\Omega}:\nabla B \,d\mathcal{H}^{d-1} dt
\\&~~~
- c_0\int_{0}^{T} \int_{\Omega} \chi_{A(t)}\nabla\cdot\big(\eta(\nabla B)^\mathsf{T}\xi\big) \,dx dt.
\end{align*}
A further integration by parts in combination with
the boundary condition~$\xi\cdot\nu_{\p\Omega}=\cos\alpha$ 
upgrades the previous display to
\begin{align*}
&\lim_{\eps \downarrow 0} \bigg(\int_{0}^{T} \int_{\Omega} 
\eta\xi\otimes\nu_\eps:\nabla B\,|\nabla\psi_\eps|\,dx dt
+ \int_{0}^{T} \int_{\p\Omega} \eta\sigma(u_\eps) 
\nu_{\p\Omega} \otimes \nu_{\p\Omega} : \nabla B \,d\mathcal{H}^{d-1} dt\bigg)
\\&
= c_0\int_{0}^{T} \int_{\p^*A(t) \cap \Omega} \eta\xi \otimes \nu_{A(t)} : \nabla B \,d\mathcal{H}^{d-1} dt
+ c_0\int_{0}^{T} \int_{\p^*A(t) \cap \p\Omega} \eta\xi \otimes \nu_{\p\Omega} : \nabla B \,d\mathcal{H}^{d-1} dt
\\&~~~
- c_0\int_{0}^{T} \int_{\p^*A(t) \cap \p\Omega} \eta(I_d{-}\nu_{\p\Omega}\otimes\nu_{\p\Omega})
\xi\otimes \nu_{\p\Omega}:\nabla B \,d\mathcal{H}^{d-1} dt
\\&
= c_0\int_{0}^{T} \int_{\p^*A(t) \cap \Omega} \eta\xi \otimes \nu_{A(t)} : \nabla B \,d\mathcal{H}^{d-1} dt
+ \jump{\hat\sigma} \int_{0}^{T} \int_{\p^*A(t) \cap \p\Omega} 
\eta \nu_{\p\Omega} \otimes \nu_{\p\Omega} : \nabla B \,d\mathcal{H}^{d-1} dt.
\end{align*}
Hence, the desired estimate~\eqref{eq:aux3} follows from the previous display
and the estimates~\eqref{eq:aux4} and~\eqref{eq:aux5}.
\end{proof}

\begin{proof}[Proof of Lemma~\ref{lem:constructionNormalVel}]
Let $U\subset\Omega$ and $V\subset(0,T)$ be two open sets,
and consider $\zeta \in C^\infty_c(\Omega\times(0,T))$
such that $|\zeta|\leq 1$ and $\supp\zeta\subset U\times V$.
We then estimate exploiting the compactness~\eqref{eq:convergence},
the chain rule in form of $\partial_t\psi_\eps=\sqrt{2W(u_\eps)}\partial_t u_\eps$,
H\"{o}lder's inequality, the energy dissipation identity~\eqref{eq:dissipIdentity}
together with the assumption~\eqref{eq:initialEnergyConvergence}, the convergence~\eqref{eq:convergenceLocEnergy},
the convergence of the trace of~$\psi_\eps$ to the trace of $c_0\chi_A$ 
due to~\eqref{eq:strictConvergence}, the continuity of $\tau:=\sigma\circ\psi^{-1}$,
and finally the identity $\tau(c_0)=\sigma(1)=\jump{\hat\sigma}$ following from the second
item of assumption~\eqref{assump:Sigma3}
\begin{align*}
c_0\langle\partial_t\chi_A,\zeta\rangle
&=\lim_{\eps \downarrow 0} \bigg(- \int_{0}^T\int_{\Omega} \psi_\eps\partial_t\zeta \,dx dt\bigg)
\\&
\leq \bigg(\liminf_{\eps \downarrow 0}\int_{0}^{T}\int_{\Omega}\eps|\partial_tu_\eps|^2\,dx dt\bigg)^\frac{1}{2}
\bigg(\limsup_{\eps \downarrow 0}\int_{0}^{T}\int_{\Omega}|\zeta|^2\frac{2}{\eps}W(u_\eps)\,dx dt\bigg)^\frac{1}{2}
\\&
\leq \sqrt{2} E^\frac{1}{2}(\chi_{A(0)})
\bigg(\limsup_{\eps \downarrow 0}\int_{0}^{T} E_\eps(u_\eps(\cdot,t);|\zeta(\cdot,t)|^2) 
- \Big(\int_{\p\Omega} |\zeta|^2\tau(\psi_\eps) \,d\mathcal{H}^{d-1}\Big) \,dt\bigg)^\frac{1}{2}
\\&
= \sqrt{2} E^\frac{1}{2}(\chi_{A(0)}) 
\bigg(c_0\int_{0}^T\int_{\p^*A(t)\cap \Omega} 
|\zeta|^2\,d\mathcal{H}^{d-1} dt\bigg)^\frac{1}{2}.
\end{align*}
In other words, $|\partial_t\chi_A|(U \times V) \lesssim 
\big|\mathcal{L}^1 \llcorner (0,T) \otimes 
(\mathcal{H}^{d-1} \llcorner (\p^*A(t) \cap \Omega))_{t\in(0,T)}\big|^\frac{1}{2}(U\times V)$
from which the claim~\eqref{eq:absContinuityTimeDeriv} immediately follows. Note however that the
$L^2$-estimate~\eqref{eq:L2boundNormalSpeedOptimal} for the associated Radon--Nikod\'{y}m derivative~$V$
does not follow from the previous estimate since the latter is suboptimal by a factor of~$\sqrt{2}$.
However, note that by the Modica--Mortola/Bogomol'nyi-trick and the above arguments
\begin{align*}
c_0\int_{0}^{T} \int_{\p^*A(t) \cap \Omega} |\zeta|^2\,d\mathcal{H}^{d-1} dt
&= \lim_{\eps \downarrow 0} \int_{0}^{T} E_\eps(u_\eps(\cdot,t);|\zeta(\cdot,t)|^2) 
- \Big(\int_{\p\Omega} |\zeta|^2\tau(\psi_\eps) \,d\mathcal{H}^{d-1}\Big) \,dt
\\&
\geq \limsup_{\eps \downarrow 0} \int_{0}^{T} \int_{\Omega} |\zeta|^2|\nabla\psi_\eps| \,dx dt.
\end{align*}
Hence, 
\begin{align*}
c_0\int_{0}^{T} \int_{\p^*A(t) \cap \Omega} |\zeta|^2\,d\mathcal{H}^{d-1} dt
&= \lim_{\eps \downarrow 0} \int_{0}^{T} \int_{\Omega} |\zeta|^2|\nabla\psi_\eps| \,dx dt,
\end{align*}
and the argument in favor of~\cite[Lemma~2.11]{LauxSimon18} ensures
\begin{align*}
\lim_{\eps \downarrow 0}\int_{0}^{T}\int_{\Omega}|\zeta|^2\frac{2}{\eps}W(u_\eps)\,dx dt
= \lim_{\eps \downarrow 0} \int_{0}^{T} E_\eps(u_\eps(\cdot,t);|\zeta(\cdot,t)|^2) 
- \Big(\int_{\p\Omega} |\zeta|^2\tau(\psi_\eps) \,d\mathcal{H}^{d-1}\Big) \,dt.
\end{align*}
This in turn allows to estimate in an optimal fashion
\begin{align*}
c_0\int_{0}^T\int_{\p^*A(t)\cap \Omega} V\zeta\,d\mathcal{H}^{d-1} dt
\leq \bigg(\liminf_{\eps \downarrow 0}\int_{0}^{T}\int_{\Omega}\eps|\partial_tu_\eps|^2\,dx dt\bigg)^\frac{1}{2}
\bigg(c_0\int_{0}^T\int_{\p^*A(t)\cap \Omega} |\zeta|^2\,d\mathcal{H}^{d-1} dt\bigg)^\frac{1}{2}
\end{align*}
which implies the $L^2$-estimate~\eqref{eq:L2boundNormalSpeedOptimal}.

Finally, the evolution equation~\eqref{eq:evolSet} is an immediate
consequence of the very definition of the Radon--Nikod\'{y}m derivative~$V$;
at least for compactly supported and smooth test functions $\zeta\in C^\infty_c(\Omega\times (0,T))$.
Since $\chi_A\in C([0,T];L^1(\Omega))$, straightforward approximation arguments allow to lift this to
the required class of test functions $\zeta \in C_c^\infty (\overline{\Omega} \times [0,T))$.
\end{proof}

\begin{proof}[Proof of Proposition~\ref{prop:convergenceVelocityTerm}]
Analogous to the proof of Proposition~\ref{prop:convergenceCurvatureTerm},
it suffices to show for all $B \in C_c^1([0,T);C(\overline{\Omega};\R^d))$ with $B \cdot \nu_{\p \Omega} =0$ 
on $\p\Omega \times (0,T)$, all $\eta \in C^1([0,T];C(\overline{\Omega}))$, 
all $\xi \in C^1([0,T];C(\overline{\Omega};\R^d))$
with $|\xi|\leq 1$ and $\xi\cdot\nu_{\p\Omega} = \cos\alpha$ along $\partial\Omega \times (0,T)$,
and all $\delta \in (0,1)$ that
\begin{align}
\nonumber
&\lim_{\eps \downarrow 0} \bigg|\int_{0}^{T} \int_{\Omega} 
\eta\big(\eps (B\cdot\nabla) u_\eps\big) \p_t u_\eps  \,dx dt 
- \int_0^{T} \int_{\p^*A(t) \cap \Omega} \eta B\cdot \nu_{A(t)} V \,d\H^{d-1} dt\bigg|
\\& \label{eq:aux6}
\lesssim_{T,\|(B,\,\xi,\,\eta)\|_{L^\infty}}
\frac{1}{\delta}\E(A;|\eta|,\xi,T) + \delta E(A(0)).
\end{align}

For an argument in favor of~\eqref{eq:aux6}, we first rewrite
by recalling~\eqref{eq:phaseFieldNormal} and~\eqref{eq:compPhaseFieldNormal},
adding zero twice, and exploiting the chain rule in form of 
$\partial_t\psi_\eps=\sqrt{2W(u_\eps)}\partial_t u_\eps$
\begin{align*}
\int_{0}^{T} \int_{\Omega} \eta\big(\eps (B\cdot\nabla) u_\eps\big) \p_t u_\eps  \,dx dt 
&= \int_{0}^{T} \int_{\Omega} \eta B\cdot(\nu_\eps - \xi) \sqrt{\eps} |\nabla u_\eps|
\, \sqrt{\eps}\p_t u_\eps  \,dx dt 
\\&~~~
+ \int_{0}^{T} \int_{\Omega} \eta (B\cdot\xi)
\bigg(\sqrt{\eps} |\nabla u_\eps| - \frac{\sqrt{2W(u_\eps)}}{\sqrt{\eps}}\bigg) 
\, \sqrt{\eps}\p_t u_\eps  \,dx dt 
\\&~~~
+ \int_{0}^{T} \int_{\Omega} \eta (B\cdot\xi) \partial_t\psi_\eps \,dx dt.
\end{align*}
Integrating by parts, taking limits based on~\eqref{eq:convergence}
and~\eqref{eq:convergenceInitialCondition}, and plugging
in $\eta(B\cdot\xi)$ as a test function into the
(already established) evolution equation~\eqref{eq:evolSet} moreover yields
\begin{align*}
&\lim_{\eps \downarrow 0} 
\int_{0}^{T} \int_{\Omega} \eta (B\cdot\xi) \partial_t\psi_\eps \,dx dt
\\&
= \lim_{\eps \downarrow 0} \bigg(- \int_{0}^{T} \int_{\Omega} \psi_\eps\partial_t\big(\eta (B\cdot\xi)\big) \,dx dt
- \int_{\Omega} \psi_\eps(\cdot,0) \big(\eta (B\cdot\xi)\big)(\cdot,0) \,dx\bigg)
\\&
= \int_0^{T} \int_{\p^*A(t)\cap \Omega} \eta (B\cdot\xi) V \,d\H^{d-1} dt.
\end{align*}
In view of~\eqref{eq:dissipIdentity}, \eqref{eq:initialEnergyConvergence}
and \eqref{eq:tiltExcessControl4}, the arguments from the proof of 
Proposition~\ref{prop:convergenceCurvatureTerm} together with the previous
two displays ensure
\begin{align}
\nonumber
&\lim_{\eps \downarrow 0} \bigg|\int_{0}^{T} \int_{\Omega} 
\eta\big(\eps (B\cdot\nabla) u_\eps\big) \p_t u_\eps  \,dx dt 
- \int_0^{T} \int_{\p^*A(t) \cap \Omega} \eta B\cdot \xi V \,d\H^{d-1} dt\bigg|
\\& \label{eq:aux7}
\lesssim_{T,\|(B,\,\xi,\,\eta)\|_{L^\infty}}
\frac{1}{\delta}\E(A;|\eta|,\xi,T) + \delta E(A(0)).
\end{align}
Finally, the $L^2$-estimate~\eqref{eq:L2boundNormalSpeedOptimal} for~$V$
together with~\eqref{eq:tiltExcessControlLimit} implies
\begin{align*}
\lim_{\eps \downarrow 0} \bigg|\int_0^{T} \int_{\p^*A(t) \cap \Omega} 
\eta B\cdot (\xi - \nu_{A(t)}) V \,d\H^{d-1} dt\bigg|
\lesssim_{T,\|(B,\,\xi,\,\eta)\|_{L^\infty}}
\frac{1}{\delta}\E(A;|\eta|,\xi,T) + \delta E(A(0)).
\end{align*} 
The previous display upgrades~\eqref{eq:aux7} to~\eqref{eq:aux6},
and thus concludes the proof.
\end{proof}

\subsection{Proof of Theorem~\ref{theo:existence}}
The first part of Theorem~\ref{theo:existence} (i.e., the
compactness claim) is already contained in Lemma~\ref{lem:compactness}.
The assertions from the first item of Definition~\ref{def:weak sol}
(Existence of normal velocity) are a consequence of Lemma~\ref{lem:constructionNormalVel}.
The combination of~\eqref{eq:convergenceCurvature1}, Proposition~\ref{prop:convergenceCurvatureTerm}
and Proposition~\ref{prop:convergenceVelocityTerm} entail
the second item of Definition~\ref{def:weak sol} (Motion law).
Finally, the optimal energy dissipation rate from the third 
item of Definition~\ref{def:weak sol} follows from~\eqref{eq:dissipIdentity},
\eqref{eq:individualEnergyConvergence}, \eqref{eq:L2boundNormalSpeedOptimal}
and~\eqref{eq:initialEnergyConvergence}. \qed

\section{Uniqueness properties of $BV$ solutions to MCF with constant contact angle}	\label{sec:uniqueness}
In this section, we turn to the question of weak-strong uniqueness for
distributional solutions to MCF with constant contact angle in the 
sense of Definition~\ref{def:weak sol}. To this end, we split
the proof of Theorem~\ref{theo:weakStrongUniqueness} into three steps.
The first two deal with the derivation of a suitable estimate for the time
evolution of the relative entropy and the bulk error, respectively.
We remark that the derivation of the former solely relies 
on the boundary conditions~\eqref{eq:angleCondXi}--\eqref{eq:angleCondB}
for the pair of vector fields~$(\xi,B)$, whereas the derivation of the latter only
makes use of the sign conditions~\eqref{eq:signWeightExterior}--\eqref{eq:signWeightInterface} 
imposed on the weight~$\vartheta$. The third step post-processes these estimates 
to~\eqref{eq:stabilityEstimateRelEntropy}, \eqref{eq:stabilityEstimateBulkError} 
and~\eqref{eq:weakStrongUniqueness} by means of the remaining properties of a 
boundary adapted gradient flow calibration.

\subsection{Quantitative stability of the relative entropy}
In the setting of Theorem~\ref{theo:weakStrongUniqueness},
we claim that for a.e.\ $T'\in [0,T]$
\begin{align}
\nonumber
&\E_{\mathrm{relEn}}(A(T')|\mathscr{A}(T')) + 
 c_0 \int_{0}^{T'} \int_{\p^*A(t) \cap \Omega}
\frac{1}{2} |V {-} (\nabla\cdot\xi)|^2 \,d\H^{d-1} dt
\\&\nonumber
+  c_0 \int_{0}^{T'} \int_{\p^*A(t) \cap \Omega}
\frac{1}{2} |V{+}(B\cdot\xi)|^2 \,d\H^{d-1} dt
\\& \label{eq:timeEvolRelEntropy}
\leq \E_{\mathrm{relEn}}(A(0)|\mathscr{A}(0)) + 
 c_0 \int_{0}^{T'} \int_{\p^*A(t) \cap \Omega}
\frac{1}{2} |(B\cdot\xi) {+}(\nabla\cdot\xi)|^2 \,d\H^{d-1} dt
\\&~~~ \nonumber
-  c_0 \int_{0}^{T'} \int_{\p^*A(t) \cap \Omega} 
B\cdot(\nu_{A(t)}{-}\xi)(V{-}(\nabla\cdot\xi)) \,d\H^{d-1} dt
\\&~~~ \nonumber
-  c_0 \int_{0}^{T'} \int_{\p^*A(t) \cap \Omega}
(\nu_{A(t)} {-} \xi) \cdot (\p_t\xi {+} (B\cdot\nabla)\xi {+} (\nabla B)^\mathsf{T}\xi) \,d\H^{d-1} dt
\\&~~~ \nonumber
-  c_0 \int_{0}^{T'} \int_{\p^*A(t) \cap \Omega}
\xi \cdot (\p_t\xi {+} (B\cdot\nabla)\xi) \,d\H^{d-1} dt
\\&~~~ \nonumber
-  c_0 \int_{0}^{T'} \int_{\p^*A(t) \cap \Omega}
(\nu_{A(t)}\cdot\xi - 1)(\nabla \cdot B) \,d\H^{d-1} dt
\\&~~~ \nonumber
-  c_0 \int_{0}^{T'} \int_{\p^*A(t) \cap \Omega}
(\nu_{A(t)} {-} \xi) \otimes (\nu_{A(t)} {-} \xi) : \nabla B \,d\H^{d-1} dt.
\end{align}

\begin{proof}[Proof of~\eqref{eq:timeEvolRelEntropy}]
It essentially follows from Subsection~2.3.3 of the PhD~thesis~\cite{Hensel2021a}
of the first author that
\begin{align}
\nonumber
&\E_{\mathrm{relEn}}(A(T')|\mathscr{A}(T')) - \E_{\mathrm{relEn}}(A(0)|\mathscr{A}(0))
\\& \label{eq:timeEvolRelEntropyAux0}
= E(A(T')) - E(A(0))
+  c_0 \int_{0}^{T'} \int_{\p^*A(t) \cap \Omega}
(\nabla\cdot\xi) (V + B\cdot\nu_{A(t)}) \,d\H^{d-1} dt
\\&~~~ \nonumber
-  c_0 \int_{0}^{T'} \int_{\p^*A(t) \cap \Omega}
(I_d {-} \nu_{A(t)} \otimes \nu_{A(t)}) : \nabla B \,d\H^{d-1} dt
\\&~~~ \nonumber
-  c_0 \int_{0}^{T'} \int_{\p^*A(t) \cap \p\Omega} \cos\alpha
(I_d {-} \nu_{\p\Omega} \otimes \nu_{\p\Omega}) : \nabla B \,d\H^{d-1} dt
\\&~~~ \nonumber
-  c_0 \int_{0}^{T'} \int_{\p^*A(t) \cap \Omega}
(\nu_{A(t)} {-} \xi) \cdot (\p_t\xi {+} (B\cdot\nabla)\xi {+} (\nabla B)^\mathsf{T}\xi) \,d\H^{d-1} dt
\\&~~~ \nonumber
-  c_0 \int_{0}^{T'} \int_{\p^*A(t) \cap \Omega}
\xi \cdot (\p_t\xi {+} (B\cdot\nabla)\xi) \,d\H^{d-1} dt
\\&~~~ \nonumber
-  c_0 \int_{0}^{T'} \int_{\p^*A(t) \cap \Omega}
(\nu_{A(t)}\cdot\xi - 1)(\nabla \cdot B) \,d\H^{d-1} dt
\\&~~~ \nonumber
-  c_0 \int_{0}^{T'} \int_{\p^*A(t) \cap \Omega}
(\nu_{A(t)} {-} \xi) \otimes (\nu_{A(t)} {-} \xi) : \nabla B \,d\H^{d-1} dt.
\end{align}
For convenience of the reader, we will reproduce the argument 
for~\eqref{eq:timeEvolRelEntropyAux0} below. Before that,
however, let us first quickly argue how to deduce~\eqref{eq:timeEvolRelEntropy}
from~\eqref{eq:timeEvolRelEntropyAux0}.
First, plugging in the energy dissipation inequality~\eqref{eq:energyDissipation}
and completing squares three times yields
\begin{align*}
&E(A(T')) - E(A(0))
+  c_0 \int_{0}^{T'} \int_{\p^*A(t) \cap \Omega}
(\nabla\cdot\xi) (V + B\cdot\nu_{A(t)}) \,d\H^{d-1} dt
\\&
\leq -  c_0 \int_{0}^{T'} \int_{\p^*A(t) \cap \Omega}
\frac{1}{2} |V {-} (\nabla\cdot\xi)|^2 \,d\H^{d-1} dt
+  c_0 \int_{0}^{T'} \int_{\p^*A(t) \cap \Omega}
(\nabla\cdot\xi) (B\cdot\nu_{A(t)}) \,d\H^{d-1} dt
\\&~~~
-  c_0 \int_{0}^{T'} \int_{\p^*A(t) \cap \Omega}
\frac{1}{2} |V|^2 \,d\H^{d-1} dt
+  c_0 \int_{0}^{T'} \int_{\p^*A(t) \cap \Omega}
\frac{1}{2} |\nabla\cdot\xi|^2 \,d\H^{d-1} dt
\\&
\leq -  c_0 \int_{0}^{T'} \int_{\p^*A(t) \cap \Omega}
\frac{1}{2} |V {-} (\nabla\cdot\xi)|^2 \,d\H^{d-1} dt
-  c_0 \int_{0}^{T'} \int_{\p^*A(t) \cap \Omega}
\frac{1}{2} |V{+}(B\cdot\xi)|^2 \,d\H^{d-1} dt
\\&~~~
+  c_0 \int_{0}^{T'} \int_{\p^*A(t) \cap \Omega}
\frac{1}{2} |(B\cdot\xi) {+}(\nabla\cdot\xi)|^2 \,d\H^{d-1} dt
\\&~~~
+  c_0 \int_{0}^{T'} \int_{\p^*A(t) \cap \Omega} 
B\cdot(\nu_{A(t)}{-}\xi)(\nabla\cdot\xi) \,d\H^{d-1} dt
+  c_0 \int_{0}^{T'} \int_{\p^*A(t) \cap \Omega}
(B\cdot\xi)V \,d\H^{d-1} dt.
\end{align*}
Second, testing~\eqref{eq:motionLaw} with the velocity~$B$,
which is indeed admissible thanks to~\eqref{eq:angleCondB}, 
and recalling Young's law in form of~$ c_0\cos\alpha=\jump{\hat\sigma}$ entails
\begin{align*}
&-  c_0 \int_{0}^{T'} \int_{\p^*A(t) \cap \Omega}
(I_d {-} \nu_{A(t)} \otimes \nu_{A(t)}) : \nabla B \,d\H^{d-1} dt
\\&~~~
-  c_0 \int_{0}^{T'} \int_{\p^*A(t) \cap \p\Omega} \cos\alpha
(I_d {-} \nu_{\p\Omega} \otimes \nu_{\p\Omega}) : \nabla B \,d\H^{d-1} dt
\\&
= -  c_0 \int_{0}^{T'} \int_{\p^*A(t) \cap \p\Omega} 
B\cdot\nu_{A(t)}V \,d\H^{d-1} dt.
\end{align*}
Inserting back into~\eqref{eq:timeEvolRelEntropyAux0} the right hand sides 
of the previous two displays and closely inspecting
the asserted estimate~\eqref{eq:timeEvolRelEntropy} thus concludes the proof.
\end{proof}

\begin{proof}[Proof of~\eqref{eq:timeEvolRelEntropyAux0}]
As a side remark, we emphasize that the following argument only
relies on using $ c_0(\nabla\cdot\xi)$ as a test function
in the transport equation~\eqref{eq:evolSet} of~$A(t)$,
some generic algebraic manipulations, and several integration
by parts, the latter in particular only exploiting the boundary 
conditions~\eqref{eq:angleCondXi}--\eqref{eq:angleCondB}
and the regularity of~$(\xi,B)$.

For the derivation of~\eqref{eq:timeEvolRelEntropyAux0},
the first important observation is that one may
rewrite the relative entropy~\eqref{eq:relEntropy}
in terms of the energy~\eqref{eq:energySharpInterface} as follows:
\begin{align*}
\E_{\mathrm{relEn}}(A(t)|\mathscr{A}(t))
= E(A(t)) + c_0\int_{A(t)} (\nabla\cdot\xi)(\cdot,t) \,dx
\end{align*}
for all $t \in [0,T]$. Indeed, integrating by parts
in the second term of the relative entropy~\eqref{eq:relEntropy},
making use in the process of the boundary condition~\eqref{eq:angleCondXi},
and finally recalling Young's law in form of $ c_0\cos\alpha=\jump{\hat\sigma}$,
one generates the boundary energy contribution in~\eqref{eq:energySharpInterface} 
as well as the second right hand side term of the previous display.
One then capitalizes on the previous display by testing~\eqref{eq:evolSet}
with $ c_0(\nabla\cdot\xi)$ and integrating by parts to obtain
\begin{align}
\nonumber
&\E_{\mathrm{relEn}}(A(T')|\mathscr{A}(T')) - \E_{\mathrm{relEn}}(A(0)|\mathscr{A}(0))
\\& \label{eq:timeEvolRelEntropyAux1}
= E(A(T')) - E(A(0))
+  c_0 \int_{0}^{T'} \int_{\p^*A(t) \cap \Omega}
(\nabla\cdot\xi)V \,d\H^{d-1} dt
\\&~~~ \nonumber 
-  c_0 \int_{0}^{T'} \int_{\p^*A(t) \cap \Omega}
\nu_{A(t)}\cdot \p_t\xi \,d\H^{d-1} dt
-  c_0 \int_{0}^{T'} \int_{\p^*A(t) \cap \p\Omega}
\nu_{\p\Omega}\cdot \p_t\xi \,d\H^{d-1} dt.
\end{align}
Adding zero several times moreover implies
\begin{align*}
&-  c_0 \int_{0}^{T'} \int_{\p^*A(t) \cap \Omega}
\nu_{A(t)}\cdot \p_t\xi \,d\H^{d-1} dt
-  c_0 \int_{0}^{T'} \int_{\p^*A(t) \cap \p\Omega}
\nu_{\p\Omega}\cdot \p_t\xi \,d\H^{d-1} dt
\\&
= -  c_0 \int_{0}^{T'} \int_{\p^*A(t) \cap \Omega}
\nu_{A(t)}\cdot (\p_t\xi {+} (B\cdot\nabla)\xi {+} (\nabla B)^\mathsf{T}\xi) \,d\H^{d-1} dt
\\&~~~
-  c_0 \int_{0}^{T'} \int_{\p^*A(t) \cap \p\Omega}
\nu_{\p\Omega}\cdot (\p_t\xi + (B\cdot\nabla)\xi) \,d\H^{d-1} dt
\\&~~~
+  c_0 \int_{0}^{T'} \int_{\p^*A(t) \cap \Omega}
\xi \otimes \nu_{A(t)} : \nabla B\,d\H^{d-1} dt
\\&~~~
+  c_0 \int_{0}^{T'} \int_{\p^*A(t) \cap \Omega}
\nu_{A(t)}\cdot (B\cdot\nabla)\xi \,d\H^{d-1} dt
+  c_0 \int_{0}^{T'} \int_{\p^*A(t) \cap \p\Omega}
\nu_{\p\Omega}\cdot (B\cdot\nabla)\xi \,d\H^{d-1} dt
\\&
= -  c_0 \int_{0}^{T'} \int_{\p^*A(t) \cap \Omega} (\nu_{A(t)} {-} \xi) \cdot 
(\p_t\xi {+} (B\cdot\nabla)\xi {+} (\nabla B)^\mathsf{T}\xi) \,d\H^{d-1} dt
\\&~~~
-  c_0 \int_{0}^{T'} \int_{\p^*A(t) \cap \Omega} \xi \cdot 
(\p_t\xi {+} (B\cdot\nabla)\xi) \,d\H^{d-1} dt
\\&~~~
-  c_0 \int_{0}^{T'} \int_{\p^*A(t) \cap \p\Omega}
\nu_{\p\Omega}\cdot (\p_t\xi + (B\cdot\nabla)\xi) \,d\H^{d-1} dt
\\&~~~
+  c_0 \int_{0}^{T'} \int_{\p^*A(t) \cap \Omega}
\xi \otimes (\nu_{A(t)} {-} \xi) : \nabla B\,d\H^{d-1} dt
\\&~~~
+  c_0 \int_{0}^{T'} \int_{\p^*A(t) \cap \Omega}
\nu_{A(t)}\cdot (B\cdot\nabla)\xi \,d\H^{d-1} dt
+  c_0 \int_{0}^{T'} \int_{\p^*A(t) \cap \p\Omega}
\nu_{\p\Omega}\cdot (B\cdot\nabla)\xi \,d\H^{d-1} dt.
\end{align*}
By the boundary condition~\eqref{eq:angleCondXi},
we obviously have $\nu_{\p\Omega}\cdot \p_t\xi = 0$
along $\p\Omega{\times}[0,T]$. Applying the product rule
and the boundary conditions~\eqref{eq:angleCondXi}--\eqref{eq:angleCondB} in form of 
$\nu_{\p\Omega}\cdot (B\cdot\nabla)\xi = - \xi\cdot (B\cdot\nabla)\nu_{\p\Omega}$,
and recalling the well-known fact that $(\nabla^\mathrm{tan}\nu_{\p\Omega})^\mathsf{T}\nu_{\p\Omega}=0$,
we get
\begin{align}
\nonumber
&-  c_0 \int_{0}^{T'} \int_{\p^*A(t) \cap \p\Omega}
\nu_{\p\Omega}\cdot (\p_t\xi + (B\cdot\nabla)\xi) \,d\H^{d-1} dt
\\& \label{eq:timeEvolRelEntropyAux2}
=  c_0 \int_{0}^{T'} \int_{\p^*A(t) \cap \p\Omega}
((I_d {-} \nu_{\p\Omega} \otimes \nu_{\p\Omega})\xi)
\cdot (B\cdot\nabla)\nu_{\p\Omega} \,d\H^{d-1} dt.
\end{align}
Next, making use of the product rule in form of $(B\cdot\nabla)\xi
= \nabla\cdot(\xi\otimes B) - \xi(\nabla\cdot B)$ and adding zero
two times entails
\begin{align*}
& c_0 \int_{0}^{T'} \int_{\p^*A(t) \cap \Omega}
\nu_{A(t)}\cdot (B\cdot\nabla)\xi \,d\H^{d-1} dt
+  c_0 \int_{0}^{T'} \int_{\p^*A(t) \cap \p\Omega}
\nu_{\p\Omega}\cdot (B\cdot\nabla)\xi \,d\H^{d-1} dt
\\&
=  c_0 \int_{0}^{T'} \int_{\p^*A(t) \cap \Omega}
\nu_{A(t)} \cdot \nabla\cdot(\xi\otimes B) \,d\H^{d-1} dt
+  c_0 \int_{0}^{T'} \int_{\p^*A(t) \cap \p\Omega}
\nu_{\p\Omega} \cdot \nabla\cdot(\xi\otimes B) \,d\H^{d-1} dt
\\&~~~
-  c_0 \int_{0}^{T'} \int_{\p^*A(t) \cap \Omega}
(I_d {-} \nu_{A(t)} \otimes \nu_{A(t)}) : \nabla B \,d\H^{d-1} dt
-  c_0 \int_{0}^{T'} \int_{\p^*A(t) \cap \p\Omega}
(\nu_{\p\Omega} \cdot \xi)(\nabla\cdot B) \,d\H^{d-1} dt
\\&~~~
-  c_0 \int_{0}^{T'} \int_{\p^*A(t) \cap \Omega}
(\nu_{A(t)} \cdot \xi - 1) (\nabla \cdot B) \,d\H^{d-1} dt
-  c_0 \int_{0}^{T'} \int_{\p^*A(t) \cap \Omega}
\nu_{A(t)} \otimes \nu_{A(t)} : \nabla B \,d\H^{d-1} dt.
\end{align*}
We further compute based on an integration by parts,
the symmetry relation $\nabla\cdot(\nabla\cdot(\xi \otimes B))
= \nabla\cdot(\nabla\cdot(B \otimes \xi))$, reverting
the integration by parts, $\nabla\cdot(B \otimes \xi)
= (\xi\cdot\nabla)B + B(\nabla\cdot\xi)$, and the 
boundary condition~\eqref{eq:angleCondB}
\begin{align*}
& c_0 \int_{0}^{T'} \int_{\p^*A(t) \cap \Omega}
\nu_{A(t)} \cdot \nabla\cdot(\xi\otimes B) \,d\H^{d-1} dt
+  c_0 \int_{0}^{T'} \int_{\p^*A(t) \cap \p\Omega}
\nu_{\p\Omega} \cdot \nabla\cdot(\xi\otimes B) \,d\H^{d-1} dt
\\&
=  c_0 \int_{0}^{T'} \int_{A(t)}
\nabla\cdot(\nabla\cdot(B \otimes \xi)) \,dx dt
\\&
=  c_0 \int_{0}^{T'} \int_{\p^*A(t) \cap \Omega}
(\nabla\cdot\xi)(B \cdot \nu_{A(t)}) \,d\H^{d-1} dt
+  c_0 \int_{0}^{T'} \int_{\p^*A(t) \cap \Omega}
\nu_{A(t)} \otimes \xi : \nabla B \,d\H^{d-1} dt
\\&~~~
+  c_0 \int_{0}^{T'} \int_{\p^*A(t) \cap \p\Omega}
\nu_{\p\Omega} \otimes \xi : \nabla B \,d\H^{d-1} dt.
\end{align*}
Splitting $\xi= (\nu_{\p\Omega}\cdot\xi)\nu_{\p\Omega}
+ (I_d{-}\nu_{\p\Omega}\otimes\nu_{\p\Omega})\xi$,
exploiting the product rule, the boundary condition~\eqref{eq:angleCondB}
and the symmetry of $\nabla^{\mathrm{tan}}\nu_{\p\Omega}$
in form of the identity $\nu_{\p\Omega} \otimes ((I_d{-}\nu_{\p\Omega}\otimes\nu_{\p\Omega})\xi) : \nabla B
= - ((I_d {-} \nu_{\p\Omega} \otimes \nu_{\p\Omega})\xi)
\cdot (B\cdot\nabla)\nu_{\p\Omega}$, we may rewrite
\begin{align}
\nonumber
& c_0 \int_{0}^{T'} \int_{\p^*A(t) \cap \p\Omega}
\nu_{\p\Omega} \otimes \xi : \nabla B \,d\H^{d-1} dt
\\& \label{eq:timeEvolRelEntropyAux3}
=  c_0 \int_{0}^{T'} \int_{\p^*A(t) \cap \p\Omega}
(\nu_{\p\Omega}\cdot\xi) \, \nu_{\p\Omega} \otimes \nu_{\p\Omega} : \nabla B \,d\H^{d-1} dt
\\&~~~ \nonumber
-  c_0 \int_{0}^{T'} \int_{\p^*A(t) \cap \p\Omega}
((I_d {-} \nu_{\p\Omega} \otimes \nu_{\p\Omega})\xi)
\cdot (B\cdot\nabla)\nu_{\p\Omega} \,d\H^{d-1} dt.
\end{align}
In particular, the combination of~\eqref{eq:timeEvolRelEntropyAux2}, \eqref{eq:timeEvolRelEntropyAux3},
and~\eqref{eq:angleCondXi} yields
\begin{align*}
&-  c_0 \int_{0}^{T'} \int_{\p^*A(t) \cap \p\Omega}
\nu_{\p\Omega}\cdot (\p_t\xi + (B\cdot\nabla)\xi) \,d\H^{d-1} dt
+  c_0 \int_{0}^{T'} \int_{\p^*A(t) \cap \p\Omega}
\nu_{\p\Omega} \otimes \xi : \nabla B \,d\H^{d-1} dt
\\&
-  c_0 \int_{0}^{T'} \int_{\p^*A(t) \cap \p\Omega}
(\nu_{\p\Omega} \cdot \xi)(\nabla\cdot B) \,d\H^{d-1} dt
\\&
= -  c_0 \int_{0}^{T'} \int_{\p^*A(t) \cap \p\Omega} \cos\alpha
(I_d {-} \nu_{\p\Omega} \otimes \nu_{\p\Omega}) : \nabla B \,d\H^{d-1} dt.
\end{align*}
Inserting back into~\eqref{eq:timeEvolRelEntropyAux1} the information 
provided by the previous six displays finally allows to conclude.
\end{proof}

\subsection{Quantitative stability of the bulk error}
In the setting of Theorem~\ref{theo:weakStrongUniqueness},
we claim that for a.e.\ $T'\in [0,T]$ it holds
\begin{align}
\label{eq:timeEvolBulkError}
\E_{\mathrm{bulk}}(A(T')|\mathscr{A}(T'))
&= \E_{\mathrm{bulk}}(A(0)|\mathscr{A}(0))
+ \int_{0}^{T'} \int_{\Omega} (\chi_{A(t)} {-} \chi_{\mathscr{A}(t)}) 
\vartheta(\nabla\cdot B) \,dx dt
\\&~~~ \nonumber
+ \int_{0}^{T'} \int_{\Omega} (\chi_{A(t)} {-} \chi_{\mathscr{A}(t)}) 
(\partial_t\vartheta {+} (B\cdot\nabla)\vartheta) \,dx dt
\\&~~~ \nonumber
+ \int_{0}^{T'} \int_{\p^*A(t) \cap \Omega} 
\vartheta B\cdot(\nu_{A(t)} {-} \xi) \,d\H^{d-1} dt
\\&~~~ \nonumber
+ \int_{0}^{T'} \int_{\p^*A(t) \cap \Omega} 
\vartheta(V + B\cdot\xi) \,d\H^{d-1} dt.
\end{align}

\begin{proof}[Proof of~\eqref{eq:timeEvolBulkError}]
In order to compute the time evolution of the 
bulk error~\eqref{eq:bulkError}, we first note that
thanks to the sign conditions~\eqref{eq:signWeightExterior}--\eqref{eq:signWeightInterior} 
we simply have
\begin{align*}
\E_{\mathrm{bulk}}(A(t)|\mathscr{A}(t))
= \int_{\Omega} (\chi_{A(t)} {-} \chi_{\mathscr{A}(t)}) 
\vartheta(\cdot,t) \,dx.
\end{align*}
Hence, plugging in $\vartheta$ as a test function in~\eqref{eq:evolSet}
and using the conditions~\eqref{eq:signWeightInterface}
and $\partial_t\chi \ll |\nabla\chi| \llcorner \big(\bigcup_{t\in [0,T]}
(\p^*\mathscr{A}(t) {\cap} \Omega) {\times} \{t\} \big)$, it follows
\begin{align*}
\E_{\mathrm{bulk}}(A(T')|\mathscr{A}(T'))
&= \E_{\mathrm{bulk}}(A(0)|\mathscr{A}(0))
+ \int_{0}^{T'} \int_{\Omega} (\chi_{A(t)} {-} \chi_{\mathscr{A}(t)}) 
\partial_t\vartheta \,dx dt 
\\&~~~
+ \int_{0}^{T'} \int_{\p^*A(t) \cap \Omega} V\vartheta \,d\H^{d-1} dt.
\end{align*}
Adding zero to generate the advective derivative of the weight~$\vartheta$,
appealing to the product rule in form of $(B\cdot\nabla)\vartheta = \nabla\cdot(B\vartheta)
- \vartheta(\nabla\cdot B)$, and integrating by parts using in particular
the boundary condition~\eqref{eq:angleCondB} for~$B$ and again
the condition~\eqref{eq:signWeightInterface}, we obtain
\begin{align*}
\int_{0}^{T'} \int_{\Omega} (\chi_{A(t)} {-} \chi_{\mathscr{A}(t)}) 
\partial_t\vartheta \,dx dt 
&= \int_{0}^{T'} \int_{\Omega} (\chi_{A(t)} {-} \chi_{\mathscr{A}(t)}) 
(\partial_t\vartheta {+} (B\cdot\nabla)\vartheta) \,dx dt
\\&~~~ 
+ \int_{0}^{T'} \int_{\Omega} (\chi_{A(t)} {-} \chi_{\mathscr{A}(t)}) 
\vartheta(\nabla\cdot B) \,dx dt 
\\&~~~
+ \int_{0}^{T'} \int_{\p^*A(t) \cap \Omega} 
\vartheta(B\cdot\nu_{A(t)}) \,d\H^{d-1} dt.
\end{align*}
The previous two displays obviously imply the claim.
\end{proof}

\subsection{Proof of Theorem~\ref{theo:weakStrongUniqueness}}	
Since $|\xi|\leq 1$ due to~\eqref{eq:coercivityLengthXi}
we have $|\nu_{A(t)}{-}\xi(\cdot,t)| \leq
2(1-\nu_{A(t)}\cdot\xi(\cdot,t))$ for all $t\in [0,T]$.
Due to the regularity~\eqref{eq_regularityB} and the estimate~\eqref{eq:timeEvolWeight},
it thus follows from~\eqref{eq:timeEvolBulkError} that
\begin{align*}
\E_{\mathrm{bulk}}(A(T')|\mathscr{A}(T'))
&\leq \E_{\mathrm{bulk}}(A(0)|\mathscr{A}(0))
+ C\int_{0}^{T'} (\E_{\mathrm{bulk}} {+} \E_{\mathrm{relEn}})(A(t)|\mathscr{A}(t)) \,dt
\\&~~~ \nonumber
+ \int_{0}^{T'} \int_{\p^*A(t) \cap \Omega} 
|\vartheta||V {+} (B\cdot\xi)| \,d\H^{d-1} dt.
\end{align*}
Furthermore, note that~$|\vartheta|(\cdot,t) \leq 
C\min\{1,\dist(\cdot,\overline{\p^*\mathscr{A}(t)\cap\Omega})\}$ for all $t\in [0,T]$
due to~\eqref{eq:signWeightInterface} and~\eqref{eq_regularityWeight}.
Hence, as a consequence of Young's inequality, the coercivity condition~\eqref{eq:coercivityLengthXi},
and the definition~\eqref{eq:relEntropy} it holds for all $\delta\in (0,1]$
\begin{align}
\label{eq:timeEvolBulkErrorPostProcessed1}
\E_{\mathrm{bulk}}(A(T')|\mathscr{A}(T'))
&\leq \E_{\mathrm{bulk}}(A(0)|\mathscr{A}(0)) 
+ C(\delta)\int_{0}^{T'} (\E_{\mathrm{bulk}} {+} \E_{\mathrm{relEn}})(A(t)|\mathscr{A}(t)) \,dt
\\&~~~ \nonumber
+ \delta \int_{0}^{T'} \int_{\p^*A(t) \cap \Omega} 
|V {+} (B\cdot\xi)|^2 \,d\H^{d-1} dt.
\end{align}
Moreover, by an application of Young's inequality, the regularity~\eqref{eq_regularityB},
the approximate evolution equations~\eqref{eq:timeEvolXi}--\eqref{eq:timeEvolLengthXi},
the condition~\eqref{eq:motionByMCFCalibration}, and again the estimate 
$|\nu_{A(t)}{-}\xi(\cdot,t)| \leq 2(1-\nu_{A(t)}\cdot\xi(\cdot,t))$
we obtain for all $\delta\in (0,1]$
\begin{align}
\nonumber
&\E_{\mathrm{relEn}}(A(T')|\mathscr{A}(T')) + 
 c_0 \int_{0}^{T'} \int_{\p^*A(t) \cap \Omega}
\frac{1}{2} |V {-} (\nabla\cdot\xi)|^2 \,d\H^{d-1} dt
\\&\nonumber
+  c_0 \int_{0}^{T'} \int_{\p^*A(t) \cap \Omega}
\frac{1}{2} |V{+}(B\cdot\xi)|^2 \,d\H^{d-1} dt
\\& \label{eq:timeEvolRelEntropyPostProcessed1}
\leq \E_{\mathrm{relEn}}(A(0)|\mathscr{A}(0))
+ C(\delta)\int_{0}^{T'} \E_{\mathrm{relEn}}(A(t)|\mathscr{A}(t))\,dt 
\\&~~~ \nonumber
+ \delta \int_{0}^{T'} \int_{\p^*A(t) \cap \Omega} 
|V {-} (\nabla\cdot\xi)|^2 \,d\H^{d-1} dt.
\end{align}
Hence, by an absorption we get~\eqref{eq:stabilityEstimateRelEntropy}
in the stronger form of
\begin{align}
\nonumber
&\E_{\mathrm{relEn}}(A(T')|\mathscr{A}(T')) + 
c \int_{0}^{T'} \int_{\p^*A(t) \cap \Omega}
\frac{1}{2} |V {-} (\nabla\cdot\xi)|^2 \,d\H^{d-1} dt
\\&\nonumber
+  c_0 \int_{0}^{T'} \int_{\p^*A(t) \cap \Omega}
\frac{1}{2} |V{+}(B\cdot\xi)|^2 \,d\H^{d-1} dt
\\& \label{eq:timeEvolRelEntropyPostProcessed2}
\leq \E_{\mathrm{relEn}}(A(0)|\mathscr{A}(0)) 
+ C\int_{0}^{T'} \E_{\mathrm{relEn}}(A(t)|\mathscr{A}(t)) \,dt
\end{align}
for some constants $c\in (0,1)$ and $C>0$.
Adding~\eqref{eq:timeEvolRelEntropyPostProcessed2} to~\eqref{eq:timeEvolBulkErrorPostProcessed1}
in combination with another absorption finally entails~\eqref{eq:stabilityEstimateBulkError}. \qed

\section*{Acknowledgments}
This project has received funding from the European Research Council 
(ERC) under the European Union's Horizon 2020 research and innovation 
programme (grant agreement No 948819)
\begin{tabular}{@{}c@{}}\includegraphics[width=8ex]{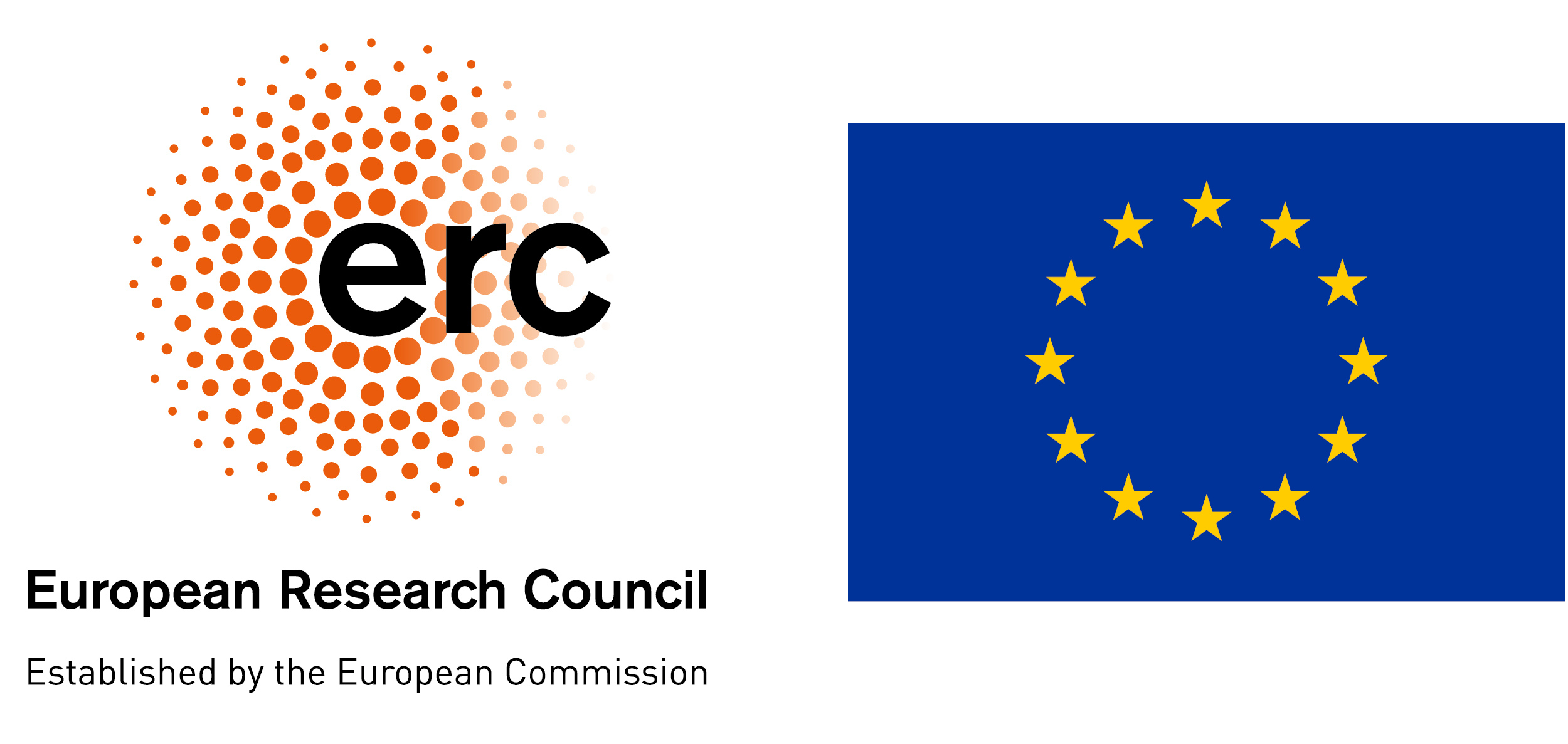}\end{tabular}, and 
from the Deutsche Forschungsgemeinschaft (DFG, German Research Foundation) under 
Germany's Excellence Strategy -- EXC-2047/1 -- 390685813.
Parts of this paper were developed during a visit of the first author to the Hausdorff Center of Mathematics (HCM),
University of Bonn. The hospitality and the support of HCM are gratefully acknowledged. 

\bibliographystyle{abbrv}
\bibliography{ac-to-bv-contact-angle}

\begin{thebibliography}{10}

\bibitem{Abels2019}
H.~Abels and M.~Moser.
\newblock Convergence of the {A}llen-{C}ahn equation to the mean curvature flow
  with $90^\circ$-contact angle in 2{D}.
\newblock {\em Interfaces Free Bound.}, 21(3):313--365, 2019.
\newblock \href{https://doi.org/10.4171/IFB/425}{doi:10.4171/IFB/425}.

\bibitem{Abels2021}
H.~Abels and M.~Moser.
\newblock Convergence of the {A}llen--{C}ahn equation with a nonlinear {R}obin
  boundary condition to mean curvature flow with contact angle close to
  $90^\circ$.
\newblock {\em To appear in SIAM J. Math. Anal.}, 2021.
\newblock \href{https://arxiv.org/abs/2105.08434}{arXiv:2105.08434}.

\bibitem{AmbrosioFuscoPallara}
L.~Ambrosio, N.~Fusco, and D.~Pallara.
\newblock {\em Functions of Bounded Variation and Free Discontinuity Problems
  (Oxford Mathematical Monographs)}.
\newblock Oxford University Press, 2000.

\bibitem{bogomolnyi}
E.~B. Bogomol'nyi.
\newblock The stability of classical solutions.
\newblock {\em Soviet J. Nuclear Phys.}, 24(4):861--870, 1976.

\bibitem{Cahn1977}
J.~W. Cahn.
\newblock Critical point wetting.
\newblock {\em J. Chem. Phys.}, 66(8):3667--3672, 1977.
\newblock \href{https://doi.org/10.1063/1.434402}{doi:10.1063/1.434402}.

\bibitem{Fischer2020a}
J.~Fischer, S.~Hensel, T.~Laux, and T.~M. Simon.
\newblock The local structure of the energy landscape in multiphase mean
  curvature flow: {W}eak-strong uniqueness and stability of evolutions.
\newblock {\em arXiv preprint}, 2020.
\newblock \href{https://arxiv.org/abs/2003.05478}{arXiv:2003.05478v2}.

\bibitem{Fischer2020}
J.~Fischer, T.~Laux, and T.~M. Simon.
\newblock Convergence rates of the {A}llen--{C}ahn equation to mean curvature
  flow: {A} short proof based on relative entropies.
\newblock {\em SIAM J. Math. Anal.}, 52(6):6222--6233, 2020.
\newblock \href{https://doi.org/10.1137/20M1322182}{doi:10.1137/20M1322182}.

\bibitem{Hensel2021a}
S.~Hensel.
\newblock {\em Curvature driven interface evolution: {U}niqueness properties of
  weak solution concepts}.
\newblock PhD thesis, IST Austria, 2021.
\newblock
  \href{https://doi.org/10.15479/at:ista:10007}{doi:10.15479/at:ista:10007}.

\bibitem{Hensel2021e}
S.~Hensel and T.~Laux.
\newblock A new varifold solution concept for mean curvature flow:
  {C}onvergence of the {A}llen--{C}ahn equation and weak-strong uniqueness.
\newblock {\em arXiv preprint}, 2021.
\newblock \href{https://arxiv.org/abs/2109.04233}{arXiv:2109.04233}.

\bibitem{Hensel2021}
S.~Hensel and T.~Laux.
\newblock Weak-strong uniqueness for the mean curvature flow of double bubbles.
\newblock {\em arXiv preprint}, 2021.
\newblock \href{https://arxiv.org/abs/2108.01733}{arXiv:2108.01733}.

\bibitem{Hensel2021c}
S.~Hensel and M.~Moser.
\newblock Convergence rates for the {A}llen--{C}ahn equation with boundary
  contact energy: The non-perturbative regime.
\newblock {\em arXiv preprint}, 2021.

\bibitem{Kagaya2017}
T.~Kagaya and Y.~Tonegawa.
\newblock A fixed contact angle condition for varifolds.
\newblock {\em Hiroshima Math. J.}, 47(2):139--153, 2017.
\newblock
  \href{https://doi.org/10.32917/hmj/1499392823}{doi:10.32917/hmj/1499392823}.

\bibitem{Kagaya2018}
T.~Kagaya and Y.~Tonegawa.
\newblock A singular perturbation limit of diffused interface energy with a
  fixed contact angle condition.
\newblock {\em Indiana Univ. Math. J.}, 67(4):1425--1437, 2018.
\newblock
  \href{https://doi.org/10.1512/iumj.2018.67.7423}{doi:10.1512/iumj.2018.67.7423}.

\bibitem{Katsoulakis1995}
M.~Katsoulakis, G.~T. Kossioris, and F.~Reitich.
\newblock Generalized motion by mean curvature with {N}eumann conditions and
  the {A}llen-{C}ahn model for phase transitions.
\newblock {\em J. Geom. Anal.}, 5(2):255--279, 1995.
\newblock \href{https://doi.org/10.1007/bf02921677}{doi:10.1007/bf02921677}.

\bibitem{Laux-Otto}
T.~Laux and F.~Otto.
\newblock Convergence of the thresholding scheme for multi-phase mean-curvature
  flow.
\newblock {\em Calc. Var. Partial Differential Equations}, 55(5), 2016.
\newblock
  \href{https://doi.org/10.1007/s00526-016-1053-0}{doi:10.1007/s00526-016-1053-0}.

\bibitem{LauxSimon18}
T.~Laux and T.~M. Simon.
\newblock Convergence of the {A}llen--{C}ahn equation to multiphase mean
  curvature flow.
\newblock {\em Comm.~Pure~Appl.~Math.}, 71(8):1597--1647, 2018.
\newblock
  \href{http://onlinelibrary.wiley.com/doi/abs/10.1002/cpa.21747}{doi:10.1002/cpa.21747}.

\bibitem{Luckhaus1995}
S.~Luckhaus and T.~Sturzenhecker.
\newblock Implicit time discretization for the mean curvature flow equation.
\newblock {\em Calc. Var. Partial Differential Equations}, 3(2):253--271, 1995.

\bibitem{Modica1987}
L.~Modica.
\newblock Gradient theory of phase transitions with boundary contact energy.
\newblock {\em Annales de l'I.H.P. Analyse non lin\'eqaire}, 4(5):487--512,
  1987.
\newblock
  \href{https://doi.org/10.1016/s0294-1449(16)30360-2}{doi:10.1016/s0294-1449(16)30360-2}.

\bibitem{modicamortola}
L.~Modica and S.~Mortola.
\newblock Un esempio di {G}amma-convergenza.
\newblock {\em Bolletino della Unione Matematica Itialana B (5)},
  14(1):285--299, 1977.

\bibitem{Moser2021}
M.~Moser.
\newblock Convergence of the scalar- and vector-valued {A}llen--{C}ahn equation
  to mean curvature flow with $90^\circ$-contact angle in higher dimensions.
\newblock {\em arXiv preprint}, 2021.
\newblock \href{https://arxiv.org/abs/2105.07100}{arXiv:2105.07100}.

\bibitem{Owen1992}
N.~Owen and P.~Sternberg.
\newblock Gradient flow and front propagation with boundary contact energy.
\newblock {\em Proc.\ Math.\ Phys.\ Sci.}, 437:715--728, 1992.
\newblock
  \href{https://doi.org/10.1098/rspa.1992.0088}{doi:10.1098/rspa.1992.0088}.

\end{thebibliography}
	
  \end{document}